\newtheorem{lem}{Lemma}[section]
\newtheorem{thm}[lem]{Theorem}
\newtheorem{proposition}[lem]{Proposition}
\newtheorem{prop}[lem]{Proposition}
\newtheorem{cor}[lem]{Corollary}
\newtheorem{conj}[lem]{Conjecture}
\theoremstyle{definition}
\newtheorem{remark}[lem]{Remark}
\newtheorem{definition}[lem]{Definition}
\newtheorem{ex}{Example}
\DeclareMathAlphabet{\curly}{U}{rsfs}{m}{n}
\newcommand{\wideunder}{\rule{0.7em}{0.4pt}}
\newcommand{\Id}{\operatorname{Id}}
\newcommand{\End}{\operatorname{End}}
\newcommand{\Hom}{\operatorname{Hom}}
\newcommand{\Sym}{\operatorname{Sym}}
\newcommand{\Pic}{\operatorname{Pic}}
\newcommand{\Q}{\mathbb{Q}}
\newcommand{\C}{\mathbb{C}}
\newcommand{\Z}{\mathbb{Z}}
\newcommand{\F}{\mathbb{F}}
\newcommand{\HH}{\mathbb{H}}
 \newcommand{\lmfdbec}[3]{\href{https://www.lmfdb.org/EllipticCurve/Q/#1/#2/#3}{#1.#2#3}}
  \newcommand{\im}{\operatorname{im}}
\mathchardef\mhyphen="2D
\newcommand{\maarten}[1]{{\color{purple} \sf $\diamondsuit\diamondsuit\diamondsuit$ Maarten: [#1]}}
\title{Modular curves $X_0(N)$ with infinitely many quartic points}
\author{\sc Maarten Derickx}
\address{Maarten Derickx\\
Den Haag, The Netherlands}
\email{maarten@mderickx.nl}
\urladdr{http://www.maartenderickx.nl/}
\author{\sc Petar Orli\'c}
\address{Petar Orli\'c \\
University of Zagreb\\  
Bijeni\v{c}ka Cesta 30 \\
10000 Zagreb\\
Croatia}
\email{petar.orlic@math.hr}
\begin{document}
\begin{abstract}
    We determine all modular curves $X_0(N)$ with infinitely many quartic points. To do this, we define a pairing that induces a quadratic form representing all possible degrees of a rational morphism from $X_0(N)$ to a positive rank elliptic curve.
\end{abstract}

\subjclass{11G18, 14G35, 14K02}
\keywords{Modular curves, Tetragonal, Tetraelliptic, Quartic point}

\maketitle

\section{Introduction}
Let $C$ be a smooth, projective, and geometrically integral curve defined over a number field $k$. Determining whether the set of points of degree $\leq d$ on $C$ is finite or infinite is an important problem in arithmetic geometry. Faltings' Theorem solves the base case $d=1$.

\begin{thm}[Faltings' Theorem]
Let $k$ be a number field and let $C$ be a non-singular curve defined over $k$ of genus $g\geq2$. Then the set $C(k)$ is finite.
\end{thm}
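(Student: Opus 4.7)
The plan is to reduce the finiteness of $C(k)$ to finiteness statements on an abelian variety. First, I would assume $C(k)$ is nonempty (otherwise there is nothing to prove), fix a basepoint $P_0 \in C(k)$, and use the Abel--Jacobi map $j \colon C \hookrightarrow J$, $P \mapsto [P-P_0]$, to embed $C$ into its Jacobian $J = \Jac(C)$, a principally polarized abelian variety of dimension $g \geq 2$. By the Mordell--Weil theorem, $J(k)$ is a finitely generated abelian group, so it suffices to show that the intersection $j(C) \cap J(k)$ is finite, i.e.\ that the curve $j(C)$ of dimension one meets the finitely generated subgroup $J(k)$ in only finitely many points.

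To establish this I would follow Faltings' original route through the Shafarevich and Tate conjectures. The first reduction is Parshin's trick: from a rational point $P \in C(k)$ one constructs an unramified-outside-$P$ cover $C_P \to C$ of bounded degree, whose Jacobian is an abelian variety over a bounded extension of $k$ with good reduction outside a fixed finite set $S$ of places. Thus an infinite $C(k)$ would produce infinitely many isomorphism classes of such abelian varieties, contradicting the Shafarevich conjecture. The Shafarevich conjecture is then deduced from the Tate isogeny conjecture, namely that
\[
\Hom(A,B)\otimes\Z_\ell \;\longrightarrow\; \Hom_{\Gal(\bar k/k)}\bigl(T_\ell A,\, T_\ell B\bigr)
\]
is an isomorphism for abelian varieties $A,B/k$, by combining Tate's conjecture with semisimplicity of the Galois representation on $T_\ell J \otimes \Q_\ell$ and a Hermite--Minkowski style finiteness of Galois representations of bounded ramification.

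The genuinely hard step, and the one I would expect to occupy almost all of the work, is the proof of the Tate conjecture. For this I would introduce the Faltings height $h_F(A)$ on the moduli space $\mathcal{A}_g$ of principally polarized abelian varieties of dimension $g$, and analyze how $h_F$ changes under isogeny, showing that within a single isogeny class only finitely many values occur. Combined with a Northcott-type statement that only finitely many points of $\mathcal{A}_g$ of bounded height and bounded degree exist, this forces any isogeny class of abelian varieties over $k$ with good reduction outside $S$ to contain only finitely many isomorphism classes, and then an $\ell$-adic argument based on semisimplicity upgrades this to Tate. The main obstacle here is the fine control of $h_F$ under isogeny: one must balance the archimedean contribution coming from the Hodge metric on the determinant of the relative differentials against the non-archimedean contribution measured at the boundary of the toroidal compactification of $\mathcal{A}_g$, and it is this arithmetic-geometric height comparison — not the formal reductions above — that I expect to be the core difficulty.
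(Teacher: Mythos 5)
The paper does not prove this statement at all: it is Faltings' theorem (the Mordell conjecture), quoted as a known result from the literature, so there is no ``paper proof'' to match your argument against. Judged on its own terms, what you have written is a roadmap of Faltings' 1983 proof rather than a proof: every load-bearing step --- the Kodaira--Parshin construction and the verification that the covers $C_P$ have bounded degree, bounded ramification, and Jacobians with good reduction outside a fixed $S$; the deduction of Shafarevich from Tate plus semisimplicity plus Hermite--Minkowski; and above all the proof of the Tate conjecture via the behaviour of the Faltings height under isogeny and the Northcott property on $\mathcal{A}_g$ --- is named and deferred, not established. You correctly identify where the real difficulty lies (the height comparison under isogeny), but identifying it is not the same as carrying it out, and nothing in the proposal closes that gap.

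A smaller structural point: your opening reduction (Abel--Jacobi embedding plus Mordell--Weil, reducing to finiteness of $j(C)\cap J(k)$) belongs to a different proof of the theorem --- the Vojta--Faltings approach via diophantine approximation on the Jacobian, or more generally Mordell--Lang --- and is never used in the route you actually sketch. Parshin's trick is applied directly to the rational points of $C$, so the Mordell--Weil step is superfluous there; as written, the proposal splices the first step of one proof onto the body of another. If you intend the Shafarevich--Tate route, drop the Jacobian reduction; if you intend the Vojta route, the rest of the argument needs to be replaced by the diophantine-approximation machinery.
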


Therefore, if $C(k)\neq\emptyset$, then it is infinite if and only if $C$ is isomorphic to $\mathbb{P}^1$ ($g=0$) or $C$ is an elliptic curve ($g=1$) with positive $k$-rank. The next step is considering the same problem for $d>1$. 

\begin{definition}
    Let $C$ be a curve defined over a number field $k$. The arithmetic degree of irrationality $\textup{a.irr}_k C$ is the smallest integer $d$ such that $C$ has infinitely many closed points of degree $d$ over $k$, i.e.
    $$\textup{a.irr}_k C:=\min\left(d, \#\left\{\cup_{[F:k]\leq d} C(F)\right\}=\infty\right).$$
    We also define
    $$\textup{a.irr}_{\overline{k}} \ C:= \min_{[L:k]<\infty} \textup{a.irr}_L \ C.$$
    It is obvious that $\textup{a.irr}_k C\geq \textup{a.irr}_{\overline{k}} \ C$.
\end{definition}

Harris and Silverman \cite{HarrisSilverman91} proved that if a curve of genus $g\geq2$ has infinitely many quadratic points, then it must be either hyperelliptic or bielliptic. Also, Abramovich and Harris \cite{AbramovichHarris91} gave a conjecture
$$\textup{a.irr}_{\overline{k}} \ C\leq d \iff C \textup{ admits a map of degree} \leq d \textup{ to } \mathbb{P}^1 \textup{ or an elliptic curve}$$
which they proved for $d=2,3$. However, Debarre and Fahlaoui constructed counterexamples for $d\geq4$ \cite{DebarreFahlaoui}. One way to characterize when there are infinitely many points of degree $d$ on $C$ is the following theorem.

\begin{thm}[{\cite[Theorem 4.2. (1)]{BELOV}}]
    Let $C$ be a curve over a number field. There are infinitely many degree $d$ points on $C$ if and only if either there exists a map $C\to \mathbb{P}^1$ of degree $d$ or the image of $\Sym^dC$ in $\Pic^d C$ contains a translate of a positive rank abelian variety.
\end{thm}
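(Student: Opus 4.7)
The plan is to pivot to the Abel--Jacobi picture. After fixing a rational divisor of degree $d$ (or working with the torsor if none exists), consider the Abel--Jacobi map $\phi_d\colon \Sym^d C\to \Pic^d C$, whose image $W_d$ is a closed subvariety and whose fiber over a class $[D]$ is the complete linear system $|D|\cong \mathbb{P}^{\dim|D|}$. A closed point of degree $d$ on $C$ gives a $k$-rational point of $\Sym^d C$, so infinitude of degree $d$ closed points yields infinitely many $k$-points of $\Sym^d C$.

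For the harder $(\Rightarrow)$ direction, I would look at the images of these $k$-points in $W_d(k)$ and split into two cases. If the image is \emph{finite}, then by pigeonhole some single fiber $|D|$ carries infinitely many $k$-rational members; since a fiber is a projective space, this forces $\dim|D|\ge 1$, producing a $g^1_d$ on $C$. The subtle point here is the base locus: any base point of this pencil is contained in every member and would prevent its members from being irreducible closed points of degree $d$, so the $g^1_d$ must in fact be base-point-free, giving an honest degree $d$ morphism $C\to \mathbb{P}^1$. If instead the image in $W_d(k)$ is \emph{infinite}, I would invoke Faltings' theorem on rational points of subvarieties of abelian varieties (Mordell--Lang over number fields): the Zariski closure of $W_d(k)$ in $\Pic^d C$ is a finite union of translates of abelian subvarieties of $\Jac C$, so infinitude forces a positive-dimensional translate $t+A\subseteq W_d$. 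Since $(t+A)(k)$ is infinite while $A(k)$ is finitely generated by Mordell--Weil, $A$ necessarily has positive rank.

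For the converse $(\Leftarrow)$, if there is a degree $d$ map $f\colon C\to \mathbb{P}^1$, then Hilbert irreducibility applied to $f$ shows that for infinitely many $k$-rational $P\in \mathbb{P}^1$, the fiber $f^{-1}(P)$ is a single irreducible closed point of degree $d$. If instead $W_d$ contains a translate $t+A$ with $A$ of positive rank, then $(t+A)(k)$ is infinite, producing infinitely many $k$-rational classes in $W_d$; lifting through $\phi_d$ gives infinitely many effective degree $d$ $k$-divisors, and a thinness argument (of Hilbert irreducibility type, applied to the generically finite map $\phi_d^{-1}(t+A)\to t+A$) shows that infinitely many of them are irreducible, hence single closed points of degree $d$.

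The main obstacle I anticipate is the infinite-image case of the forward direction, which rests on Faltings' theorem for subvarieties of abelian varieties, together with the non-trivial observation that the resulting abelian subvariety must have positive Mordell--Weil rank rather than merely positive dimension. A secondary but genuinely delicate point, appearing in both directions, is the bookkeeping needed to pass between $k$-points of $\Sym^d C$ and irreducible closed points of degree exactly $d$ on $C$; this is where the base-locus analysis in the finite-image case and the Hilbert-irreducibility-style arguments in the converse do the real work.
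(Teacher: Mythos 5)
The paper never proves this statement; it is imported wholesale from \cite{BELOV}, so your sketch can only be measured against the standard argument. Your forward direction and your $\PP^1$-case of the converse are essentially that standard argument and are fine: in the finite-image case the pencil spanned by two distinct irreducible degree-$d$ members of a common linear system is base-point-free (a base point would have to coincide with each irreducible member), giving a map of degree exactly $d$, and in the infinite-image case Faltings (Mordell--Lang) plus Mordell--Weil produces a positive-rank translate; Hilbert irreducibility handles the pullback of rational points of $\PP^1$.

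The genuine gap is in the converse for the abelian-variety case. First, from ``$W_d$ contains a translate $t+A$'' you cannot conclude that $(t+A)(k)$ is infinite: a coset of $A$ is a torsor and may have no $k$-points at all (for $d=1$ and $C$ a nontrivial torsor under a positive-rank elliptic curve, $W_1=\Pic^1C$ is geometrically a translate of a positive-rank abelian variety while $C$ has no degree-one points, so this reading must be confronted, not assumed). Even when the coset does have $k$-points, a $k$-point of $W_d$ need not lift to a $k$-point of $\Sym^dC$: the fiber of $\phi_d$ over it is a Brauer--Severi variety, i.e.\ a rational class need not be represented by a rational effective divisor. The usual repair is to take the translate through the class of an effective rational divisor and to note that the Brauer obstruction $\delta(t+a)$ takes only finitely many values as $a$ runs over the finitely generated group $A(k)$, hence vanishes on a finite-index coset; your sketch says nothing about this. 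Second, and more seriously, the closing ``thinness argument of Hilbert-irreducibility type applied to $\phi_d^{-1}(t+A)\to t+A$'' cannot work as stated: (i) that map is typically not generically finite --- already for $C=E$ an elliptic curve and $d=2$ it is a $\PP^1$-bundle over $\Pic^2E$, and its fibers are positive-dimensional linear systems whenever $d>g$; and (ii) abelian varieties are not Hilbertian: since $A(k)/2A(k)$ is finite, $A(k)$ is covered by finitely many images of $k$-points under the degree-$2^{2g}$ covers $x\mapsto t_i+2x$, so $A(k)$ is itself thin and no thin-set avoidance can show that infinitely many of the lifted divisors are irreducible of degree exactly $d$. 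Passing from ``infinitely many effective rational divisors of degree $d$ with classes in a positive-rank coset'' to ``infinitely many closed points of degree $d$'' needs a different mechanism (a pigeonhole/Faltings induction on splitting types, or the argument isolated in Theorem 4.3 of \cite{BELOV}); this is exactly the step your proposal leaves unproved.
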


It can be hard to check, however, whether the image of $\Sym^dC$ in $\Pic^d C$ contains a translate of a positive rank abelian variety. Kadets and Vogt gave a simpler characterization for $d=2,3$, which encompasses the previous results of Harris-Silverman \cite{HarrisSilverman91} and Abramovich-Harris \cite{AbramovichHarris91}. 

\begin{thm}[\cite{KadetsVogt}, Theorem 1.3]\label{kadetsvogt1.3}
    Suppose $X/k$ is a smooth, projective, and geometrically integral curve. Then the following statements hold:
    \begin{enumerate}[(1)]
        \item If $\textup{a.irr}_k X=2$, then $X$ is a double cover of $\mathbb{P}^1$ or an elliptic curve of positive rank over $k$.
        \item If $\textup{a.irr}_k X=3$, then one of the following three cases holds:
        \begin{enumerate}[(a)]
            \item $X$ is a triple cover of $\mathbb{P}^1$ or an elliptic curve of positive rank over $k$.
            \item $X$ is a smooth plane quartic with no rational points, positive rank Jacobian, and at least one cubic point.
            \item $X$ is a genus $4$ Debarre-Fahlaoui curve.
        \end{enumerate}
        \item If $\textup{a.irr}_{\overline{k}} X=d\leq3$, then $X_{\overline{k}}$ is a degree $d$ cover of $\mathbb{P}^1$ or an elliptic curve.
        \item If $\textup{a.irr}_{\overline{k}} X=d=4,5$, then either $X_{\overline{k}}$ is a Debarre-Fahlaoui curve, or $X_{\overline{k}}$ is a degree $d$ cover of $\mathbb{P}^1$ or an elliptic curve.
    \end{enumerate}
\end{thm}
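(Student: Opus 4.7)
The plan is to start from the previously stated BELOV theorem: having infinitely many degree $d$ points on $X$ is equivalent to either the existence of a map $X \to \mathbb{P}^1$ of degree $d$, or the image of $\Sym^d X$ in $\Pic^d X$ containing a translate $t+A$ of a positive rank abelian subvariety $A \subseteq \Jac(X)$. The first alternative directly produces the $\mathbb{P}^1$-cover cases in (1), (2a), (3), and (4), so the real content lies in controlling the second alternative.

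For the abelian subvariety alternative, I would first reduce to $A$ simple by replacing $A$ with a suitable simple isogeny factor of positive rank. The key dichotomy is whether $\dim A = 1$ or $\dim A \geq 2$. If $\dim A = 1$, then $A$ is a positive rank elliptic curve $E$, and composing the Abel-Jacobi map with the quotient $\Jac(X) \to E$ yields a dominant morphism $\Sym^d X \to E$; a fiber analysis together with the fact that $X$ generates $\Jac(X)$ then produces an actual morphism $X \to E$ of degree at most $d$. This handles all the elliptic curve conclusions in (1), (2a), (3), and (4). If $\dim A \geq 2$, one invokes a Debarre-Fahlaoui style analysis of abelian subvarieties swept out by effective divisor classes of degree $d$: for $d = 2, 3$, Clifford's inequality combined with a genus bound rules this case out except in the smooth plane quartic configuration (2b), while for $d = 4, 5$ this is precisely where the Debarre-Fahlaoui examples genuinely appear and cannot be eliminated.

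To handle part (2) one descends the statement of (3): by (3), over $\overline{k}$ the curve $X$ is a triple cover of $\mathbb{P}^1$ or of an elliptic curve, and the issue is when this descends to $k$. A non-descending $g^1_3$ forces $X$ to be a smooth plane quartic with no rational point, in which case the cubic points are produced by varying a rational pencil against a fixed cubic point using the positive rank of the Jacobian, giving case (2b). A non-descending elliptic quotient, combined with the genus bound coming from the Debarre-Fahlaoui structure, forces the genus $4$ case (2c). The main obstacle is precisely the $\dim A \geq 2$ analysis: controlling which pairs $(X, A)$ give $t + A$ inside the image of $\Sym^d X$ requires the full Debarre-Fahlaoui structure theorem for curves sitting inside abelian varieties with bounded gonality, and separating (2b) from (2a) requires a delicate Galois-cohomological field-of-definition argument verifying that the geometric $g^1_3$ does not descend to $k$ while a cubic point still exists.
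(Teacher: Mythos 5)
This statement is quoted verbatim from Kadets--Vogt (their Theorem 1.3); the paper cites it and gives no proof, so there is no internal argument to compare your sketch against, and it has to be judged as a reconstruction of the Kadets--Vogt proof.

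Judged that way, it has a genuine gap at its central step. Your skeleton is the dichotomy ``$\dim A=1$ is easy, $\dim A\geq 2$ needs the Debarre--Fahlaoui analysis,'' with the $\dim A=1$ case disposed of by the claim that a translate of a positive rank elliptic curve $E$ inside the image of $\Sym^d X$ in $\Pic^d X$ always yields, by ``fiber analysis,'' a morphism $X\to E$ of degree at most $d$. That claim is exactly the Abramovich--Harris expectation that Debarre and Fahlaoui disproved for $d\geq 4$: their curves sit inside a symmetric power of an elliptic curve $E$, the image of $\Sym^d X$ contains a translate of the one-dimensional abelian variety $E$ itself, and yet $X$ admits no map of degree $\leq d$ to $E$ or to $\mathbb{P}^1$. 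So the hard structural analysis (minimal abelian subvarieties whose translate lies in the image of $\Sym^d X$, the induced embedding of $X$ into $\Sym^m(E)$, and the Castelnuovo/Clifford-type genus bounds) is needed already when $A$ is an elliptic curve; it is not confined to $\dim A\geq 2$, and it is precisely this analysis that for $d=2,3$ rules out everything except cases (2b) and (2c) and for $d=4,5$ forces the Debarre--Fahlaoui alternative in (4). Without it, none of the elliptic-curve conclusions in (1)--(3) is established, and the parts of your sketch that invoke ``a Debarre--Fahlaoui style analysis'' and a ``delicate Galois-cohomological field-of-definition argument'' as black boxes are the actual content of the theorem rather than reductions of it. (Smaller points: the BELOV criterion you start from is a finiteness criterion over number fields, so some care is needed before using it for the $\overline{k}$ statements (3) and (4); and in case (2b) the mechanism producing infinitely many cubic points from one cubic point plus positive rank needs an argument, not just the assertion that a geometric $g^1_3$ fails to descend.)
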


\begin{definition}
    For a curve $C$ defined over a field $k$, the $k$-gonality $\textup{gon}_k C$ is the smallest integer $d$ such that there exists a morphism of degree $d$ from $C$ to $\mathbb{P}^1$ defined over $k$.
\end{definition}

The question of determining $\textup{a.irr}_k C$ is closely related to the $k$-gonality of $C$ and degree $k$ maps to elliptic curves. Frey \cite{frey} proved that if a curve $C$ defined over a number field $k$ has infinitely many points of degree $\leq d$ over $k$, then $\textup{gon}_k C\leq2d$.

Regarding the curves $C=X_1(M,N)$ and $k=\Q$, all cases when $C$ has infinitely many points of degree $d\leq6$ were determined by Mazur \cite{mazur77} (for $d=1$), Kenku, Momose, and Kamienny \cite{KM88, kamienny92} (for $d=2$), Jeon, Kim, and Schweizer \cite{JeonKimSchweizer04} (for $d=3$), Jeon, Kim, and Park \cite{JeonKimPark06} (for $d=4$), and Derickx and Sutherland \cite{DerickxSutherland17} (for $d=5,6$). Additionally, Derickx and van Hoeij \cite{derickxVH} determined all curves $X_1(N)$ which have infinitely many points of degree $d=7,8$ and Jeon determined all trielliptic \cite{Jeon2022} and tetraelliptic \cite{Jeon2023} curves $X_1(N)$ over $\Q$.

In this paper, we will study the curve $C=X_0(N)$ and $k=\Q$. The curve $X_0(N)$ has infinitely many rational points if and only if $N\in\{1-10,12,13,16,18,25\}$ (i.e. when $g(X_0(N))=0$). This was proved by Mazur \cite{mazur78} and Kenku \cite{kenku1979, kenku1980_1, kenku1980_2, kenku1981}. 

Ogg \cite{Ogg74} determined all hyperelliptic curves $X_0(N)$, Bars \cite{Bars99} determined all bielliptic curves $X_0(N)$, as well as all curves $X_0(N)$ with infinitely many quadratic points, and Jeon \cite{Jeon2021} determined all curves $X_0(N)$ with infinitely many cubic points.

\begin{thm}[Bars]\label{thmquadratic}
    The modular curve $X_0(N)$ has infinitely many points of degree $2$ over $\Q$ if and only if
    $$N\in\{1-33,35-37,39-41,43,46-50,53,59,61,65,71,79,83,89,101,131\}.$$
\end{thm}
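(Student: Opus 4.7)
The plan is to reduce the problem to existing classifications via Theorem~\ref{kadetsvogt1.3}(1). For $X_0(N)$ of genus $\geq 2$, having infinitely many quadratic points over $\Q$ is equivalent to admitting a degree-$2$ morphism over $\Q$ to either $\mathbb{P}^1$ or to an elliptic curve of positive $\Q$-rank. For $g(X_0(N))\leq 1$ the curve always has infinitely many quadratic points: when $g=0$ it is $\Q$-isomorphic to $\mathbb{P}^1$ via a rational cusp, and when $g=1$ any Weierstrass model $y^2=f(x)$ over $\Q$ yields infinitely many quadratic points of the form $(x_0,\sqrt{f(x_0)})$ as $x_0$ ranges over $\Q$ with $f(x_0)$ a non-square. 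These contribute $N\in\{1\text{--}10,12,13,16,18,25\}$ and $N\in\{11,14,15,17,19,20,21,24,27,32,36,49\}$ respectively, all of which appear in the target list.

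For $g(X_0(N))\geq 2$, I would proceed along two routes. First, if $X_0(N)$ is $\Q$-hyperelliptic, then Ogg's classification identifies
$$N\in\{22,23,26,28,29,30,31,33,35,37,39,40,41,46,47,48,50,59,71\},$$
and inspection shows each of these lies in the stated list. Second, if $X_0(N)$ is not $\Q$-hyperelliptic, I need to determine exactly when it admits a degree-$2$ $\Q$-rational map to an elliptic curve of positive rank, i.e.\ when $X_0(N)$ is $\Q$-bielliptic with a positive rank target.

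For the bielliptic case, the approach is to first classify all $\Q$-bielliptic $X_0(N)$. One bounds the genus of such $N$ using the Castelnuovo-Severi inequality (together with Ogg's bound on gonality via good reduction), leaving a finite list of candidates. Any bielliptic involution of $X_0(N)$ commutes with the $\Q$-rational Atkin-Lehner action up to finite indeterminacy, so each degree-$2$ elliptic quotient appears as an isogeny factor of $J_0(N)$ coming from the new or old part, and can thus be tabulated explicitly. For each candidate $N$ one enumerates this finite collection of elliptic targets and looks up their $\Q$-rank in Cremona's tables or the LMFDB.

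The main obstacle is this last rank verification: one must confirm that exactly the remaining list entries $\{43,53,61,65,79,83,89,101,131\}$ (together with those already hyperelliptic) admit a bielliptic quotient of positive $\Q$-rank, while every other $N$ of genus $\geq 2$ either fails to be $\Q$-bielliptic or has all elliptic quotients of rank $0$. Once this rank table is assembled, Theorem~\ref{kadetsvogt1.3}(1) gives both directions of the equivalence, completing the classification.
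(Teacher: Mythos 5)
The paper does not prove this statement at all: it is quoted verbatim from Bars \cite{Bars99} (with the hyperelliptic input from Ogg \cite{Ogg74}), so there is no internal proof to compare against. Your outline is, in essence, the strategy of the cited source: dispose of genus $0$ and $1$, invoke Ogg's hyperelliptic list, and reduce the rest to the classification of $\Q$-bielliptic $X_0(N)$ with a positive rank elliptic quotient; your lists for the genus $\le 1$, hyperelliptic, and bielliptic-positive-rank levels are the correct ones and do assemble to the stated set. Two caveats. First, \Cref{kadetsvogt1.3}(1) only gives the ``only if'' direction; the converse is the elementary pullback argument (the $d=d'=2$ case of \Cref{degreedmap}, plus Faltings to discard the finitely many rational points when $g\ge 2$), so the equivalence is fine but is not literally contained in that theorem. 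Second, and more substantively, the step you phrase as ``any bielliptic involution of $X_0(N)$ commutes with the $\Q$-rational Atkin--Lehner action up to finite indeterminacy'' is where the real work lies, and as stated it is too vague to carry the argument: knowing that the elliptic target is an isogeny factor of $J_0(N)$ does not by itself produce or exclude a degree-$2$ map. What the actual proof uses is the determination of $\Aut(X_0(N))$ (Ogg, Kenku--Momose, Elkies), which shows that outside a short explicit list of levels every involution --- in particular every bielliptic involution --- is an Atkin--Lehner involution $w_d$, so the degree-$2$ elliptic quotients are exactly the genus-one quotients $X_0(N)/\langle w_d\rangle$; the finiteness of the candidate levels comes from an Ogg-style point count over $\F_{p^2}$ (a bielliptic curve satisfies $\#X_0(N)(\F_4)\le 2\,\#E(\F_4)$), with the Castelnuovo--Severi inequality serving only to control coexistence and uniqueness of hyperelliptic/bielliptic involutions. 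With those inputs the remaining task is indeed the finite rank lookup you describe, yielding precisely the extra levels $\{43,53,61,65,79,83,89,101,131\}$. So your plan is correct in outline and coincides with the known proof, but the bielliptic classification it leans on is asserted rather than established.
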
 

\begin{thm}[Jeon]\label{thmcubic}
    The modular curve $X_0(N)$ has infinitely many points of degree $3$ over $\Q$ if and only if
    $$N\in\{1-29,31,32,34,36,37,43,45,49,50,54,64,81\}.$$
\end{thm}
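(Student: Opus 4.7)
The plan is to combine Theorem \ref{kadetsvogt1.3}(2) with known gonality bounds and the decomposition of the modular Jacobian. Theorem \ref{kadetsvogt1.3}(2) tells us that if $\textup{a.irr}_\Q X_0(N) = 3$, then $X_0(N)$ must either admit a degree-$3$ $\Q$-rational map to $\mathbb{P}^1$ or to a positive-rank elliptic curve, or fall into one of the two exceptional cases: a smooth plane quartic with positive-rank Jacobian and a cubic point, or a genus-$4$ Debarre-Fahlaoui curve. Frey's inequality gives $\textup{gon}_\Q X_0(N) \le 6$, and Abramovich's linear lower bound on the gonality of $X_0(N)$ then restricts the possible levels to a finite, explicitly computable set, which I would enumerate at the outset.

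Within this finite list, I would split the analysis into three parts. The levels with $\textup{a.irr}_\Q X_0(N) \le 2$ are already given by Bars's Theorem \ref{thmquadratic} and contribute automatically. For genuine $\Q$-trigonality of $X_0(N)$ I would consult the Hasegawa--Shimura classification of trigonal $X_0(N)$ and keep those whose $g^1_3$ is defined over $\Q$. For the \emph{trielliptic} case I would decompose $J_0(N) \sim \prod_i A_{f_i}$ up to $\Q$-isogeny, pick out the elliptic factors $E_f$ of positive Mordell--Weil rank (e.g.\ via LMFDB), and decide for each whether there is a $\Q$-rational morphism $X_0(N) \to E_f$ of degree $3$; the existence of such a map yields infinitely many cubic points by pulling back $E_f(\Q)$. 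The two exceptional Kadets--Vogt cases occur only in genus $3$ and $\ge 4$ respectively, and can be handled level by level on the very short list of modular $X_0(N)$ of those genera, either by direct inspection of explicit models or by Atkin--Lehner considerations.

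The hard part will be the trielliptic step: for each candidate pair $(N, E_f)$ one needs a practical criterion to detect a degree-$3$ $\Q$-rational morphism $X_0(N) \to E_f$, since the modular parametrization of $E_f$ itself is typically of much larger degree. A natural approach is to pull back a N\'eron differential on $E_f$ to $X_0(N)$ and analyze intersection-theoretic invariants on $J_0(N)$ via the Hecke action, or alternatively to combine the Castelnuovo--Severi inequality with known quotient maps to rule out small-degree morphisms when none can exist. The argument is then completed by, for each remaining $N$, either exhibiting an explicit degree-$3$ map to $\mathbb{P}^1$ or to a positive-rank elliptic curve, or ruling out any such map via these bounds, after which the list collapses precisely to the one claimed.
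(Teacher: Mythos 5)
This theorem is not proved in the paper at all: it is quoted as known and attributed to Jeon \cite{Jeon2021}, so your proposal can only be measured against the statement itself, and as written it contains a genuine error. The step ``the levels with $\textup{a.irr}_\Q X_0(N)\leq 2$ are already given by Bars's Theorem and contribute automatically'' is false: having infinitely many quadratic points does not imply having infinitely many cubic points, and the cubic list is \emph{not} a superset of the quadratic list. Indeed $N=30,33,35,39,40,41,46,47,48,53,59,61,65,71,79,83,89,101,131$ all occur in \Cref{thmquadratic} but not in the statement you are proving. For instance, $X_0(30)$ is hyperelliptic of genus $3$, hence has infinitely many quadratic points; but a curve of genus $\geq 3$ admitting both a degree $2$ and a degree $3$ map to $\mathbb{P}^1$ is excluded by Castelnuovo--Severi, and $J_0(30)$ has rank $0$, so by the criterion of \cite[Theorem 4.2 (1)]{BELOV} it has only finitely many cubic points. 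Your plan would therefore wrongly include all of these levels, and it also gives no mechanism for the ``only if'' direction at such levels: curves with $\textup{a.irr}_\Q\leq 2$ fall outside \Cref{kadetsvogt1.3}(2), so for them you must still decide directly (via a degree $3$ map to $\mathbb{P}^1$, or a positive-rank abelian translate inside the image of $\Sym^3$ in $\Pic^3$) whether cubic points are infinite --- this is exactly the trigonal/trielliptic analysis you reserve for the $\textup{a.irr}_\Q=3$ case, not an automatic inclusion.

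The remaining structure of your outline (Frey plus Abramovich to get a finite range, Hasegawa--Shimura for $\Q$-trigonality, decomposition of $J_0(N)$ and positive-rank elliptic factors for the trielliptic case) is sensible and is in the same spirit as the quartic analysis actually carried out in this paper, with two practical remarks: the exceptional case (2)(b) of \Cref{kadetsvogt1.3} can be discarded at once because every $X_0(N)$ has a rational cusp, and the bound $N\leq 4800/7$ coming from Frey--Abramovich, while finite, is far too coarse for a level-by-level trielliptic search --- one should sharpen it with Ogg-type point counts over $\F_{p^2}$ as in \Cref{oggineq2}, and detect or exclude degree $3$ maps to elliptic curves by a quantitative device such as the degree pairing of \Cref{pairingcomputation} rather than ad hoc intersection-theoretic arguments.
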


We here determine all curves $X_0(N)$ with infinitely many quartic points. Our main result is the following theorem.

\begin{thm}\label{quarticthm}
    The modular curve $X_0(N)$ has infinitely many points of degree $4$ over $\Q$ if and only if
    \begin{align*}
        N\in\{&1-75,77-83,85-89,91,92,94-96,98-101,103,104,107,111,\\
        &118,119,121,123,125,128,131,141-143,145,155,159,167,191\}.
    \end{align*}
\end{thm}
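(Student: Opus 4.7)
The plan is to combine the BELOV dichotomy with gonality bounds and a careful analysis of the Jacobian decomposition. By Theorem 4.2 of \cite{BELOV}, $X_0(N)$ has infinitely many quartic points over $\Q$ if and only if either (i) there is a degree $4$ morphism $X_0(N) \to \PP^1$ defined over $\Q$, or (ii) the image of $\Sym^4 X_0(N)$ in $\Pic^4 X_0(N)$ contains a translate of a positive $\Q$-rank abelian variety. Applying Theorem \ref{kadetsvogt1.3}(4) geometrically and tracking rationality, condition (ii) reduces to either the existence of a $\Q$-rational degree-$4$ map from $X_0(N)$ to a positive $\Q$-rank elliptic quotient of $J_0(N)$, or a Debarre-Fahlaoui configuration. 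The latter case must be ruled out for $X_0(N)$, which I would do using the Hecke and Atkin-Lehner structure of $J_0(N)$: a Debarre-Fahlaoui curve of the relevant genus admits a double cover to a genus-$2$ curve embedded in an abelian surface with a specific $(1,2)$-polarization, which can be compared against the newform decomposition of $J_0(N)$.

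Next I would reduce to a finite list of candidate $N$. Abramovich's bound $\gon_\Q X_0(N) \geq \tfrac{7}{800}[\SL_2(\Z) : \Gamma_0(N)]$ (and sharper variants due to subsequent authors) confines the $N$ for which (i) can hold to a finite explicit set. Likewise, Frey's inequality $\gon_\Q X_0(N) \leq 2\cdot\textup{a.irr}_\Q X_0(N) \leq 8$ bounds the $N$ for which (ii) can hold. For each $N$ on the resulting candidate list, tetragonality is decided by the known classification of $\Q$-gonalities of $X_0(N)$ (supplemented by Theorems \ref{thmquadratic} and \ref{thmcubic} for lower-gonality cases), so the residual task is to decide, for each candidate $N$, whether a $\Q$-rational degree-$4$ morphism $X_0(N) \to E$ exists for some positive $\Q$-rank elliptic quotient $E$ of $J_0(N)$.

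The main novel ingredient is the pairing announced in the abstract. For each $\Q$-rational elliptic quotient $E$ of $J_0(N)$, I would equip $\Hom_\Q(J_0(N), E)$ with a positive-definite integral quadratic form $q_E$ arising from pulling back the principal polarization on $E$ via the modular intersection pairing. The key property is that $q_E(\varphi)$ equals the degree of the composed morphism $X_0(N) \hookrightarrow J_0(N) \xrightarrow{\varphi} E$ whenever the composition is non-constant, so that the values of $q_E$ on primitive lattice vectors corresponding to genuine morphisms are precisely the degrees of all rational morphisms $X_0(N) \to E$. Computing $q_E$ explicitly from Hecke eigenvalues and the modular degree of $E$ (data readily available from the LMFDB) and minimizing over the appropriate sublattice decides whether a degree-$4$ map exists. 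The main obstacle is the volume of case analysis across the several hundred candidate $N$ (each potentially with multiple positive-rank elliptic quotients to inspect), together with the subtlety of correctly identifying the sublattice of $\Hom_\Q(J_0(N), E)$ corresponding to actual morphisms so that $q_E$ computes degrees rather than multiples thereof; a subsidiary obstacle is making the descent from $\overline{\Q}$ to $\Q$ in the Kadets-Vogt reduction rigorous, in particular for the Debarre-Fahlaoui exclusion.
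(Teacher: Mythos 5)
Your core engine is the same as the paper's: a positive-definite quadratic form on $\Hom_\Q(J_0(N),E)$ whose values are the degrees of the induced maps $X_0(N)\to E$, computed from Hecke eigenvalues and the modular degree and then minimized. But two steps of your reduction have genuine gaps. First, you route the reduction through the geometric statement \Cref{kadetsvogt1.3}(4), which forces you to exclude Debarre--Fahlaoui curves and to descend from $\overline{\Q}$ to $\Q$; you only gesture at this (``compare against the newform decomposition''), and the sketch is not an argument --- nor is the $(1,2)$-polarization description you invoke the characterization you would actually need. The paper avoids the issue entirely: it applies \Cref{kadetsvogt1.4}, which is stated over $k=\Q$, with $d=4$, $m=1$, $\epsilon=5$, so case (2) forces $g\le 7$; hence for $g\ge 8$ one gets directly over $\Q$ a degree-$2$ map to a curve with $\textup{a.irr}_\Q=2$ or a degree-$4$ map to a curve with $\textup{a.irr}_\Q=1$, and \Cref{kadetsvogt1.3}(1) together with Faltings yields a degree-$4$ map to $\PP^1$ or a positive rank elliptic curve (\Cref{degree4morphismcor}). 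No Debarre--Fahlaoui case and no descent arise; the one unresolved low-genus level, $N=97$, is dispatched separately (\Cref{prop97}), a case your framework does not clearly cover.

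Second, for the lattice-minimization step you must know the \emph{full} group $\Hom_\Q(J_0(N),E)$, not merely a sublattice: if the maps $f\circ\iota_{d,N,M}$ generated only a finite-index subgroup, showing the form omits the value $4$ there would say nothing about morphisms outside it. You flag ``identifying the sublattice'' as an obstacle but give no method; this is exactly the content of \Cref{optimalquotient}, where Atkin--Lehner--Li theory plus the computationally verified injectivity of $\xi_{E,N}^\vee$ for the relevant $N<408$ show that the degeneracy maps composed with the modular parametrization form a basis when $E$ is the strong Weil curve, and the non-strong-Weil members of the isogeny class require the additional factorization argument in the proof of \Cref{tetraellthm}. (Your worry about ``degrees rather than multiples thereof'' points the wrong way: every element of $\Hom_\Q(J_0(N),E)$ composed with $X_0(N)\hookrightarrow J_0(N)$ is an honest morphism and the pairing computes its degree exactly, by \Cref{dagger-degree} and \Cref{prop_daggerpairing}; the real issue is generation of the lattice.) Finally, your finiteness reduction via Frey and Abramovich is correct in principle but yields a far larger search range; the paper's Ogg point-count bound (\Cref{oggineq2}, \Cref{tetraellipticcor}) is what makes the case list manageable, though that is a matter of efficiency rather than correctness.
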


For $N$ in the above set, we prove in \Cref{infinitelyquarticsection} that $X_0(N)$ has infinitely many quartic points. The harder part of the proof is proving that for the other $N$, there are only finitely many quartic points on $X_0(N)$. 


\begin{remark}
    After submitting this paper to arXiv, we found out that this result has in the meantime been proven in \cite{Hwang2023} using different methods. However, the methods presented here could be used to solve the higher degree cases (i.e. $d=5$). Moreover, with some modifications, our methods could be used to determine all trielliptic and tetraelliptic curves $X_0(N)$ and also all trielliptic quotients of $X_0(N)$ (all bielliptic quotients of $X_0(N)$ were determined in \cite{bars22biellipticquotients}).

    Thus, in this paper we put emphasis on our method of determining the possible degrees of a rational morphism to an elliptic curve. The classification of curves $X_0(N)$ with infinitely many quartic points is given afterwards as an application.
\end{remark}

\Cref{sectionjacobians} contains the technical results used in \Cref{section6}, where we determine all positive rank tetraelliptic curves $X_0(N)$. Our main tool for proving that a curve $C$ over $\Q$ does not admit a degree $4$ morphism to an elliptic curve $E$ over $\Q$ is the representation of rational morphisms from $J_0(N)$ to $E$ by a quadratic form.

\begin{prop}[Part of the proof of \Cref{tetraellthm}]\label{quadraticformprop}
Let $C$ be a curve over $\mathbb Q$ with at least one rational point and $E$ an elliptic curve over $\mathbb Q$ that occurs as an isogeny factor of $J(C)$ with multiplicity $n \geq 1$. Then the degree map $\deg: \Hom_\Q(C,E) \to \Z$ can be extended to a positive definite quadratic form on $\Hom_\Q(J_0(N),E) \cong \Z^n$.
\end{prop}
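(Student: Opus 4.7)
The plan is to pass from morphisms of curves to homomorphisms of abelian varieties via Abel--Jacobi, then to identify $\Hom_\Q(J(C),E)$ as a free abelian group, and finally to recognize the degree as a $\Z$-valued quadratic form via Poincar\'e's formula and the theorem of the cube.

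First, I would use the rational point $P_0 \in C(\Q)$ to form the Abel--Jacobi embedding $\iota\colon C \hookrightarrow J(C)$ over $\Q$, normalized so that $\iota(P_0)=0$. By the universal property of the Jacobian, any morphism $f\colon C\to E$ defined over $\Q$ factors uniquely as $f = \varphi\circ\iota + c$ with $\varphi\in\Hom_\Q(J(C),E)$ and $c = f(P_0) \in E(\Q)$; since translation by $c$ is an automorphism of $E$, the degree depends only on $\varphi$. Hence the degree descends to a well-defined function $Q\colon\Hom_\Q(J(C),E)\to\Z$ given by $Q(\varphi)=\deg(\varphi\circ\iota)$ (and $Q(0)=0$). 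Moreover, since every elliptic curve over $\Q$ has $\End_\Q(E)=\Z$ and $J(C)\sim_\Q E^n\times B$ by Poincar\'e's complete reducibility (with $B$ containing no $E$-isogeny factor), $\Hom_\Q(J(C),E)$ is torsion-free of rank $n$, hence abstractly isomorphic to $\Z^n$.

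The main step is to show $Q$ is a quadratic form. For this I would invoke Poincar\'e's formula $[\iota(C)] = \Theta^{g-1}/(g-1)!$ in the Chow ring of $J(C)$, where $g=g(C)$ and $\Theta$ is the theta divisor. Setting $\mathcal L_\varphi := \varphi^*\mathcal O_E([O])$, a symmetric line bundle on $J(C)$, one computes
\[
Q(\varphi) \;=\; \deg(\varphi\circ\iota) \;=\; \deg\iota^*\mathcal L_\varphi \;=\; c_1(\mathcal L_\varphi)\cdot \frac{\Theta^{g-1}}{(g-1)!}
\]
via the projection formula. The theorem of the cube applied to the symmetric bundle $\mathcal O_E([O])$ --- equivalently, the parallelogram identity $(\varphi+\psi)^*\mathcal O_E([O])\otimes(\varphi-\psi)^*\mathcal O_E([O])\cong (\varphi^*\mathcal O_E([O]))^{\otimes 2}\otimes(\psi^*\mathcal O_E([O]))^{\otimes 2}$ --- shows that $\varphi\mapsto c_1(\mathcal L_\varphi)$ is a quadratic form $\Hom_\Q(J(C),E)\to \mathrm{NS}(J(C))$. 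Pairing with the fixed class $\Theta^{g-1}/(g-1)!$ then produces the desired $\Z$-valued quadratic form on $\Hom_\Q(J(C),E)$ extending $\deg$.

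Positive definiteness is immediate: $Q(\varphi)\geq 0$, with equality iff $\varphi\circ\iota$ is constant. Since $\iota(P_0)=0$ the constant must be $0$, so $\varphi$ vanishes on $\iota(C)$; as $\iota(C)$ generates $J(C)$ as a group and $\varphi$ is a homomorphism, this forces $\varphi=0$. The principal technical point is the quadratic dependence of $c_1(\mathcal L_\varphi)$ on $\varphi$, which is the theorem of the cube in disguise; everything else is routine once the Abel--Jacobi reduction is in place.
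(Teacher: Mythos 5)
Your argument is correct, but it runs along a genuinely different route than the one the paper has in mind (the paper in fact only sketches its proof, since \Cref{quadraticformprop} is not needed for \Cref{tetraellthm}). You obtain quadraticity of the degree by pulling back the origin bundle, $\mathcal L_\varphi=\varphi^*\mathcal O_E([O])$, invoking the theorem of the cube to see that $\varphi\mapsto c_1(\mathcal L_\varphi)\in\operatorname{NS}(J(C))$ is quadratic, and then pairing with the class of the Abel--Jacobi curve (the Poincar\'e formula $\Theta^{g-1}/(g-1)!$ is decorative here -- the projection formula against $\iota_*[C]$ already suffices, and quadraticity is only needed at the level of numerical classes, which is exactly what the cube theorem gives); the identification $\Hom_\Q(J(C),E)\cong\Z^n$ comes from Poincar\'e reducibility plus $\End_\Q(E)=\Z$ and torsion-freeness. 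The paper instead constructs the symmetric bilinear extension explicitly: the dagger pairing $\langle f,g\rangle_\dagger=f\circ g^\dagger$ built from the principal polarizations (a Rosati-type adjoint), which \Cref{prop_daggerpairing} shows is a positive definite symmetric bilinear form with values in $\End_\Q(J(E))=\Z$ and whose diagonal is the degree by \Cref{dagger-degree}; the lattice structure $\Hom_\Q(J(C),E)\cong\Hom_\Q(E^n,E)\cong\Z^n$ is obtained via optimal $E$-isogenous quotients as in \Cref{section6}. The two approaches are cousins (your polar form $Q(\varphi+\psi)-Q(\varphi)-Q(\psi)$ is precisely the degree pairing $\langle\varphi,\psi\rangle+\langle\psi,\varphi\rangle$ in disguise), but the paper's version produces an explicit bilinear pairing that it later evaluates on degeneracy maps in terms of Hecke eigenvalues (\Cref{pairingcomputation}), which is what makes the method computationally effective, whereas your cube-theorem argument is a clean, self-contained proof of the abstract statement that yields the bilinear form only implicitly. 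Your positive-definiteness argument (constancy of $\varphi\circ\iota$ forces $\varphi=0$ since $\iota(C)$ generates $J(C)$) and the rank computation are both sound.
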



This statement is a generalization of \cite[Corollary III.6.3]{silverman}, which deals with the case when $C$ is an elliptic curve. The proof uses optimal $E$-isogenous quotients defined in \Cref{section6} to prove that $\Hom_\Q(J(C),E)\cong\Hom_\Q(E^n,E)\cong\Z^n$. We do not give the proof here because we do not need \Cref{quadraticformprop} to prove \Cref{tetraellthm}, but are instead able to manually construct a desired quadratic form in all our cases.

In \Cref{subsectiondegreepairing} we define a pairing on $\Hom_\Q(J(C),E)$ which is an extension of the degree map. \Cref{prop_daggerpairing} tells us that it is a positive definite symmetric bilinear quadratic form. It turns out that, in all our cases, the degeneracy maps $\iota_{d,N,M}$ (defined in \Cref{sectiondegeneracymaps}) form a basis for $\Hom_\Q(J(C),E)$. More precisely, we have the following result.

\begin{prop}
    Let $E$ be an elliptic curve of positive $\Q$-rank and conductor $\textup{Cond}(E)=M\mid N<408$, and let $f:X_0(M)\to E$ be a modular parametrization of $E$. Then (with the natural embedding $X_0(N)\to J_0(N)$), the maps $f\circ \iota_{d,N,M}$ form a basis for $\Hom_\Q(J(C),E)$, where $d$ ranges over all divisors of $\frac{N}{M}$.
\end{prop}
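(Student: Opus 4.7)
The plan is to first identify $\Hom_\Q(J_0(N),E)$ as a free $\Z$-module of rank $\sigma_0(N/M)$, then prove $\Q$-linear independence of the candidate maps, and finally verify that they generate the full lattice rather than only a finite-index sublattice. For the rank, applying \Cref{quadraticformprop} to $C = X_0(N)$ gives $\Hom_\Q(J_0(N),E)\cong\Z^n$, where $n$ is the multiplicity of $E$ in the isogeny decomposition of $J_0(N)$. Since $E$ has conductor $M$, the Atkin--Lehner oldforms theorem tells us that the associated normalized newform $f_E \in S_2(\Gamma_0(M))^{\mathrm{new}}$ gives rise to exactly $\sigma_0(N/M)$ linearly independent oldforms $f_E(dz)$ for $d \mid N/M$ in $S_2(\Gamma_0(N))$, so $n = \sigma_0(N/M)$.

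Linear independence of the $\sigma_0(N/M)$ maps $f\circ\iota_{d,N,M}$ would follow by pulling back the invariant differential $\omega_E$: the induced map on $H^0(X_0(N),\Omega^1)$ sends $\omega_E$ to a nonzero scalar multiple of the oldform corresponding to $f_E(dz)$. Since these oldforms are linearly independent, no nontrivial $\Q$-linear combination of the $f\circ\iota_{d,N,M}$ can vanish, and the candidates therefore span a sublattice $\Lambda$ of finite index in $\Hom_\Q(J_0(N),E) \cong \Z^{\sigma_0(N/M)}$.

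The main obstacle is to show that $[\Hom_\Q(J_0(N),E):\Lambda] = 1$. My approach here is to use the degree pairing of \Cref{subsectiondegreepairing}, which by \Cref{prop_daggerpairing} is positive definite, so the square of the index equals the ratio of the Gram determinant of the $f\circ\iota_{d,N,M}$ under this pairing to the discriminant of the full lattice. The Gram matrix entries are computable from degrees of compositions $\iota_{e,N,M}^\vee \circ \iota_{d,N,M}$, which are classically expressible via Hecke operators at primes dividing $N/M$, while the lattice discriminant can be recovered from the modular degree of the optimal parametrization $f$. The hypothesis $N<408$ likely corresponds to the range in which this determinantal comparison can be carried out exhaustively, for each positive-rank elliptic curve $E$ with $\textup{Cond}(E) = M \mid N$, using standard modular forms databases; ruling out congruence phenomena between oldform lifts that could inflate the lattice beyond $\Lambda$ is the computational heart of the argument.
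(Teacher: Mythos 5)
Your first two steps are sound and essentially parallel to what the paper does: the rank of $\Hom_\Q(J_0(N),E)$ equals the number $n$ of divisors of $N/M$ by Atkin--Lehner--Li theory (multiplicity one for the newform of $E$ plus the oldform decomposition), and pulling back differentials shows the maps $f\circ\iota_{d,N,M}$ are linearly independent, hence span a finite-index sublattice $\Lambda\subseteq\Hom_\Q(J_0(N),E)\cong\Z^n$. (Note that \Cref{quadraticformprop}, which you cite for the rank, is itself stated without proof in the paper, but the freeness and rank claim can be obtained directly as you indicate.)

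The genuine gap is in your third step, which you correctly identify as the heart of the matter but do not actually carry out. The identity $[\Hom_\Q(J_0(N),E):\Lambda]^2=\det(\mathrm{Gram}(\Lambda))/\mathrm{disc}(\Hom_\Q(J_0(N),E))$ is true but useless here: the discriminant of the full lattice under the degree pairing is not known a priori, and your suggestion that it "can be recovered from the modular degree of $f$" is unjustified --- the modular degree controls the rank-one lattice $\Hom_\Q(J_0(M),E)$, not the $n$-dimensional lattice for level $N$; determining the latter's discriminant is essentially equivalent to the statement being proved, so the argument is circular as written. Moreover, the possible failure of index one is a real phenomenon: the obstruction is the kernel of $\xi_{E,N}^\vee:E^n\to J_0(N)$, which in general contains torsion related to the Shimura subgroup (see \Cref{2groupkernel} and \Cref{conj:injective}), so "congruence phenomena" cannot be dismissed and the paper cannot prove they vanish in general. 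What the paper actually does is verify computationally (with modular symbols in Sage, for the finitely many pairs $(N,E)$ with $N<408$ --- this is the true role of the hypothesis $N<408$) that $\ker\xi_{E,N}^\vee$ is trivial, conclude via \Cref{optimalquotient} that $\xi_{E,N}:J_0(N)\to E^n$ is an optimal $E$-isogenous quotient, and then use the universal property: every morphism $J_0(N)\to E$ factors uniquely through $\xi_{E,N}$, and since $\Hom_\Q(E^n,E)\cong\Z^n$ with the coordinate projections as basis, the components $f\circ\iota_{d,N,M}$ of $\xi_{E,N}$ form a basis. To repair your proof you would need either this kernel computation or some other concrete input (not a discriminant comparison) that pins down the full lattice.
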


Therefore, the coefficients of the quadratic form are the values of the pairing on the base elements $f\circ \iota_{d,N,M}$. The main result of \Cref{sectionjacobians}, \Cref{pairingcomputation}, allows us to explicitly compute these coefficients in terms of the $q$-expansion of the modular form associated with $E$. We use these quadratic forms in \cref{finitelyquarticsection} to prove the following theorem.

\begin{thm}\label{tetraellthm}
The curve $X_0(N)$ is positive rank tetraelliptic over $\mathbb Q$ if and only if $$N \in \{57, 58, 65, 74, 77, 82, 86, 91, 99, 111, 118, 121, 123, 128, 141, 142, 143, 145, 155, 159\}.$$
\end{thm}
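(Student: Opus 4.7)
The plan is to reduce the statement to a finite computation involving explicit quadratic forms attached to pairs $(N, E)$. First, I would note that any degree $4$ morphism $X_0(N) \to E$ to a positive rank elliptic curve forces $E$ to be a $\Q$-simple isogeny factor of $J_0(N)$, so its conductor $M$ divides $N$; composing with a degree $2$ map $E \to \PP^1$ gives $\gon_\Q X_0(N) \leq 8$, and Abramovich's lower bound on $\gon_{\overline\Q} X_0(N)$ then restricts $N$ to a finite explicit range, consistent with the bound $N < 408$ appearing in the Proposition above. In this range, all positive rank elliptic factors $E$ of $J_0(N)$ can be enumerated from known tables of modular forms.

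Next, for each candidate pair $(N, E)$ I would use the Albanese map $X_0(N) \hookrightarrow J_0(N)$ based at the cusp $\infty$ to identify $\Q$-morphisms $X_0(N) \to E$ with elements of $\Hom_\Q(J_0(N), E)$ modulo translation by $E(\Q)$, with the degree recovered from the pairing of Section \ref{subsectiondegreepairing}. By \Cref{prop_daggerpairing} this pairing is a positive definite symmetric bilinear form, and by the Proposition preceding the theorem the maps $f \circ \iota_{d,N,M}$ with $d \mid N/M$ form a $\Z$-basis when $N < 408$. The Gram matrix in this basis is computed entry by entry via \Cref{pairingcomputation} from the $q$-expansion of the newform associated to $E$, yielding an explicit positive definite integral quadratic form $q$ of rank $\sigma_0(N/M)$; the question of whether $X_0(N)$ is degree $4$ tetraelliptic via $E$ is then equivalent to whether $q$ represents $4$.

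For the "if" direction I would, for each $N$ in the listed set, exhibit a suitable divisor $d$ and verify $q(f \circ \iota_{d,N,M}) = 4$, or more generally display an integer vector $v$ with $q(v) = 4$; the corresponding morphism is then a genuine degree $4$ $\Q$-map to $E$. For the "only if" direction, I would enumerate, for each $N$ in the finite range not appearing in the listed set and each positive rank elliptic factor $E$ of $J_0(N)$, all short vectors in the lattice $(\Hom_\Q(J_0(N), E), q)$ and verify that none has norm $4$, which by the correspondence above rules out a degree $4$ morphism $X_0(N) \to E$.

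The main obstacle will be producing the Gram matrices reliably across the many candidate pairs $(N, E)$, and in particular checking that the basis of degeneracy maps is \emph{integral} (not merely $\Q$-rational) in $\Hom_\Q(J_0(N), E)$, so that the absence of a norm $4$ lattice vector really rules out a degree $4$ morphism rather than only a $\Q$-linear combination of morphisms. A secondary difficulty is that certain levels carry several positive rank elliptic factors, forcing the non-representation argument to be repeated for each such $E$; this case analysis is carried out in \Cref{finitelyquarticsection}.
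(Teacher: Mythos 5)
Your core strategy for the hard direction coincides with the paper's: identify $\Hom_\Q(X_0(N),E)$ (up to translation) with $\Hom_\Q(J_0(N),E)$, take the degeneracy maps $f\circ\iota_{d,N,M}$ as a basis, compute the Gram matrix of the degree pairing via \Cref{pairingcomputation}, and enumerate short vectors to show $4$ is not represented. However, there are two concrete gaps. First, your reduction to a finite range is both misstated and insufficient as written: a degree $4$ map to $E$ followed by a degree $2$ map $E\to\PP^1$ gives $\gon_\Q X_0(N)\leq 8$, and Abramovich's bound $\gon_\C X_0(N)\geq \frac{7}{800}N$ then only yields $N\leq 6400/7\approx 914$, not $N<408$; the paper explicitly discards this bound as impractical and instead eliminates all $N\geq 402$ (plus a further explicit list) using Ogg's point-count inequality $4(p+1)^2\geq L_p(N)$ for tetraelliptic curves (\Cref{oggineq2}, \Cref{tetraellipticcor}). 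This matters because the key input you cite — that the $f\circ\iota_{d,N,M}$ form a genuine $\Z$-basis of $\Hom_\Q(J_0(N),E)$ — rests on the computational verification in \Cref{optimalquotient} that $\xi_{E,N}^\vee$ has trivial kernel, which is only carried out for $N<408$; on the range $408\leq N\lesssim 914$ your argument has no available basis statement, so you must either add the Ogg-type elimination or extend that computation.

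Second, the "integrality of the basis" issue you flag as the main obstacle is not merely a computational nuisance: the basis statement is proved (via \Cref{optimalquotient}) only when $E$ is the strong Weil curve in its isogeny class, and for an isogenous positive rank curve $E$ the images $f\circ\iota_{d,N,M}$ of a parametrization need not a priori generate all of $\Hom_\Q(J_0(N),E)$, so non-representation of $4$ on that sublattice would not by itself exclude a degree $4$ map to $E$. The paper closes this case without any Gram matrix: every $g\in\Hom_\Q(J_0(N),E)$ factors as $g_2\circ m\circ\xi_{E',N}$ with $E'$ the strong Weil curve, $g_2$ a generator of $\Hom_\Q(E',E)$ and $m\in\Hom_\Q((E')^n,E')$, whence $\deg g\geq\deg(m\circ\xi_{E',N})>4$ by the strong Weil computation. (Also note the level $M=N$ case is handled simply by checking the modular degree exceeds $4$.) Finally, for the "if" direction the paper exhibits explicit degree $4$ maps — quotient maps $X_0(N)\to X_0^*(N)$ to rank $1$ elliptic curves, plus the explicit modular degree $4$ curves for $N=121,128$ — rather than producing vectors of norm $4$ in the lattice; your variant is legitimate (a vector of norm $4$ does give a degree $4$ morphism by \Cref{dagger-degree}), and in fact treats levels such as $N=65$, where the degree $4$ map is a degree $2$ parametrization composed with a $2$-isogeny, more systematically, but it again depends on having the correct integral lattice for non-strong-Weil targets.
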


One step of the proof of \Cref{tetraellthm} is to check for finitely many cases that the suitable quadratic form does not take $4$ as a value and conclude that there are no degree $4$ rational morphisms from $X_0(N)$ to $E$. The quadratic forms considered are listed in \Cref{tab:main}. 

The last two sections are an application of our methods developed in \Cref{sectionjacobians} and \Cref{section6}. In \Cref{infinitelyquarticsection}, we find degree $4$ morphisms for all levels $N$ when curve $X_0(N)$ has infinitely many quartic points. 

In \Cref{finitelyquarticsection}, we prove that any curve $C/\Q$ of genus $g\geq8$ with infinitely many quartic points and finitely many cubic points has a degree $4$ morphism to $\mathbb{P}^1$ or a positive rank elliptic curve. Using this result for $C=X_0(N)$, along with the fact that all curves $X_0(N)$ with infinitely many cubic points are tetragonal over $\Q$ (since all of them either have genus $0$ or $1$, or are hyperelliptic or bielliptic), we get that any curve $X_0(N)$ of genus $g\geq8$ with infinitely many quartic points must admit a degree $4$ rational morphism to a positive rank elliptic curve (we will call such curves positive rank tetraelliptic). Therefore, only finitely many levels $N$ (actually, only $N=97$) need to be solved separately.

Finally, at the end of \Cref{finitelyquarticsection}, we prove \Cref{tetraellthm} and \Cref{quarticthm} and give a few examples to illustrate the application of our methods.

The reason why we only solved the case $d=4$ is the following. Although we could get a similar result as in \Cref{tetraellthm} for $d\geq5$, there is a large number of small genus curves $X_0(N)$ for which we cannot use \Cref{kadetsvogt1.4} to connect the degree $d$ maps with the points of degree $\leq d$.

For example, when $d=5$, \Cref{kadetsvogt1.4} can only be used for curves of genus $g\geq12$. Although we can use the Jacobi inversion theorem to deal with the cases $g\leq5$, there are $34$ curves $X_0(N)$ with genus $g\in [6,11]$ such that $J_0(N)$ has positive rank over $\Q$ (for $d=4$ we were lucky to have only one such small genus case, $N=97$). Furthermore, there exists only one pentagonal curve $X_0(N)$, namely $X_0(109)$ \cite[Theorem 1.3]{NajmanOrlic22}, and we did not find any degree $5$ maps from $X_0(N)$ to an elliptic curve. Therefore, we expect that in most $g\geq6$ cases the curve $X_0(N)$ will have only finitely many degree $5$ points.

\section*{Funding}

The second author was supported by the QuantiXLie Centre of Excellence, a project co-financed by the Croatian Government and European Union through the European Regional Development Fund - the Competitiveness and Cohesion Operational Programme (Grant KK.01.1.1.01.0004) and by the Croatian Science Foundation under the project no. IP-2022-10-5008.

\section*{Conflicts of Interest}

The authors have no relevant financial or non-financial interests to disclose.

\section*{Data availability statement}

The Sage codes used in the proofs can be found on:
\url{https://github.com/koffie/mdsage/tree/main/articles/derickx_orlic-quartic_X0}.

\section*{Acknowledgements}

We are grateful to Filip Najman for his helpful comments and Kenneth A. Ribet for providing some very useful references to the literature. We also thank the referees for their comments that have greatly improved the paper.

\section{Jacobians}\label{sectionjacobians}

\subsection{Notation and definitions}\label{subsectiondefinitions}

Let $C,C'$ be curves over a field $k$. A morphism $f: C \to C'$ induces maps $f_* : J(C) \to J(C')$ and $f^* : J(C') \to J(C)$ which are defined as follows. If $D = \sum n_iP_i$ and $D'= \sum n'_iP'_i$ are divisors on $C$ and $C'$ respectively, then $f_*([D]) = [\sum n_if(P_i)]$ and $f^*([D']) = [\sum n'_if^{-1}(P'_i)]$. When seeing $J(C)$ not as divisors modulo principal divisors but as $\Pic^0(C)$, the map $f^*$ is sometimes also denoted as $f^\vee$ or $\Pic(f)$.

\begin{lem}\label{degreepairinglemma}
    $f_* \circ f^* = [\deg f]$.
\end{lem}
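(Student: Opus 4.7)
The plan is to verify the equality directly from the definitions of $f_*$ and $f^*$ on divisor classes. Because both $f_*$ and $f^*$ are group homomorphisms and $J(C')$ is generated by classes of points, it suffices to check the identity on a single class $[P']$ for a closed point $P' \in C'$. The only ingredient beyond the definitions stated in the excerpt is the convention that the pullback $f^{-1}(P')$ is taken as a divisor, i.e.\ with ramification indices: $f^*([P']) = \bigl[\sum_{Q \in f^{-1}(P')} e_Q(f)\, Q\bigr]$, where $e_Q(f)$ denotes the ramification index of $f$ at $Q$.

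Given this, I would carry out the computation in two short steps. First, apply $f^*$ to obtain
\[
f^*([P']) = \Bigl[\sum_{Q \in f^{-1}(P')} e_Q(f)\, Q\Bigr].
\]
Next, push forward by $f$, using that $f_*$ just sends each $Q$ to $f(Q) = P'$ with multiplicity preserved, to get
\[
f_*(f^*([P'])) = \Bigl[\sum_{Q \in f^{-1}(P')} e_Q(f)\, f(Q)\Bigr] = \Bigl[\Bigl(\sum_{Q \in f^{-1}(P')} e_Q(f)\Bigr) P'\Bigr].
\]
The proof then concludes by invoking the standard fact that for a finite morphism $f$ of smooth projective curves, $\sum_{Q \in f^{-1}(P')} e_Q(f) = \deg f$ for every $P' \in C'$. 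Hence $f_*(f^*([P'])) = (\deg f)\, [P'] = [\deg f]([P'])$, and extending by linearity gives the lemma.

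There is essentially no obstacle here: the entire content is the compatibility of pullback/pushforward on divisors with the multiplication-by-$\deg f$ map on the Jacobian. The only point that deserves a sentence of care in the writeup is the ramification-indexed convention for $f^*$ on divisors (as opposed to a naive set-theoretic preimage), since without it the formula $f_* \circ f^* = [\deg f]$ would fail at ramified points. No further machinery is needed, and one may simply remark that the identity extends from points to arbitrary divisor classes by $\Z$-linearity.
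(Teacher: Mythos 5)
Your proof is correct and follows essentially the same route as the paper: a direct computation from the definitions, using that pushing forward the (multiplicity-weighted) preimage of a point gives $(\deg f)$ times that point. The only difference is that you spell out the ramification-index convention for $f^*$ and reduce to a single point class by linearity, which the paper leaves implicit.
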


\begin{proof}
    $f_*(f^*([D']))=f_*(f^*([\sum n'_iP'_i]))=f_*([\sum n'_if^{-1}(P'_i)])=[\sum n'_i\cdot(\deg f)P'_i]=[(\deg f) D']$.
\end{proof}

By \cite{Milne1986}, Theorem 6.6, the abelian variety $J(C)$ comes with a canonical principal polarization $$\phi_{\Theta_C} : J(C) \to J(C)^\vee$$ induced by the theta divisor of $C$. This map is an isomorphism.

If $P \in C(k)$, then we can define the embedding morphism
\begin{align*}
f_P : C &\to J(C), \\
 x &\mapsto [x - P].
\end{align*}

If $A$ is an abelian variety over $k$, we can use this point to define 
\begin{align*}
\Hom_P(C,A) := \left\lbrace f \in \Hom(C,A) \mid f(P) = 0 \right\rbrace.
\end{align*} 
With this definition, the universal property of the Jacobian (\cite{Milne1986}, Theorem 6.1) states that the map 
\begin{align*}
\iota_P : \Hom(J(C),A) & \to \Hom_P(C,A), \\
h & \mapsto h \circ f_P
\end{align*}
is an isomorphism.

The map 
\begin{align*}
s_P : \Hom(C, A)  &\to \Hom_P(C, A),\\
 f &\mapsto t_{-f(P)} \circ f,
\end{align*}
where $t_{-f(P)}$ denotes the translation by $-f(P)$ map, is a retraction of the canonical inclusion $\Hom_P(C,A) \to \Hom(C,A)$ whose kernel are the constant maps. Since the constant maps can be identified with $A(k)$, we have a direct sum decomposition
\begin{align*}
    \Hom(C,A) \cong \Hom_P(C,A) \times A(k).
\end{align*}
If $A$ is an elliptic curve, then $f$ and $s_P(f)$ have the same degree because $t_{-f(p)}$ is an isomorphism. In particular if one wants to study the possible degrees that occur for elements in $\Hom(C,A)$, it suffices to restrict to those in $\Hom_P(C, A)$.

Note that the maps $f_P^\vee: J(C)^\vee \to J(C)$ and $\phi_{\Theta_C}: J(C) \to J(C)^\vee$ are closely related to each other, namely $f_P^\vee \circ \phi_{\Theta_C} = -\Id_{J(C)}$. For elliptic curves, one often takes $P=0_E$ to be the zero section of the elliptic curve, and then the map $f_{0_E} : E \to J(E)$ is used to identify $E$ with its Jacobian/dual. So the above means that this identification differs by the one coming from the polarization $\phi_{\Theta_E}: J(E) \to J(E)^\vee = E^{\vee\vee} \cong E$ by a minus sign.

\subsection{Degree pairing}\label{subsectiondegreepairing}

We already saw in the previous section that if $f: C \to C'$ is a map of curves, then $f_* \circ f^* = [\deg f]$. This motivates the following definition:
\begin{definition}
Let $C,E$ be curves over a field $k$ with $E$ being an elliptic curve. The degree pairing is defined on $\Hom(C,E)$ as
\begin{align*}
\left<\wideunder,\wideunder\right> : \Hom(C,E)\times\Hom(C,E) & \to \End(J(E))\\
f,g &\mapsto f_*\circ g^*.
\end{align*}
If $P \in C(k)$, then we can define the degree pairing on $\Hom(J(C),E)$ as
\begin{align*}
\left<\wideunder,\wideunder\right> : \Hom(J(C),E)\times\Hom(J(C),E) & \to  \End(J(E)),\\
f,g &\mapsto (f \circ f_P)_*\circ (g \circ f_P)^*.
\end{align*}

\end{definition}

We will also write $\left<f,g\right>:=f_*\circ g^*$ for $f,g\in\Hom(C,C')$ (this is not a pairing when $C'$ is not elliptic since $\Hom(C,C')$ is not an abelian group in that case). With this notation we have $\langle f, f \rangle = [\deg f]$ for $f \in \Hom(C, C')$.

Note that the definition on $\Hom(J(C),E)$ is slightly  unsatisfactory since a priori it seems to depend on the base point $P$. Additionally, it is not defined in terms of intrinsic properties of the abelian variety $J(C)$, but instead just defined by using $f_P: C \to J(C)$ to transport the definition on $\Hom(C,E)$ to that on $\Hom(J(C),E)$. So let's try to give a more intrinsic definition.

Let $A$ and $B$ be two polarized abelian varieties over $k$ with polarizations $\phi_A$ and $\phi_B$ respectively and assume the polarization $\phi_A$ is principal. Then one can define the map
\begin{align*}
\wideunder^\dagger : \Hom(A,B) &\to \Hom(B,A),\\
 f & \mapsto \phi_A^{-1}\circ f^\vee\circ \phi_B.
\end{align*}

When $A=B$ this is just the Rosati involution, defined in Section 17 of \cite{Milne1986AV}.

\begin{definition}
Let $(A,\phi_A)$ and $(B,\phi_B)$ be two polarized abelian varieties over $k$ with $\phi_A$ a principal polarization. Then the dagger pairing on $\Hom(A,B)$ is defined as
\begin{align*}
\left<\wideunder,\wideunder\right>_\dagger : \Hom(A,B)\times\Hom(A,B) & \to  \End(B),\\
f,g &\mapsto f \circ g^\dagger.
\end{align*}
\end{definition}

The following lemma shows how the dagger pairing relates to the degree pairing.
\begin{lem}\label{dagger-degree}
Let $C,E$ be curves over a field $k$ with $E$ being an elliptic curve, and let $P \in C(k)$. Then for $f,g \in \Hom(J(C), J(E))$ we have 
$$\langle f, g\rangle_\dagger = \langle \, f_{0_E}^{-1} \circ f \circ f_P, \,\, f_{0_E}^{-1} \circ g \circ f_P\,\rangle,$$
where the principal polarizations on $J(C)$ and $J(E)$ needed for the definition of $\left<\wideunder,\wideunder\right>_\dagger$ are taken to be those coming from the theta divisors on $C$ and $E$.
\end{lem}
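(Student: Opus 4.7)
The plan is to unpack both sides of the asserted identity into compositions of morphisms of abelian varieties and then verify they coincide. Writing $h_1 := f_{0_E}^{-1} \circ f \circ f_P$ and $h_2 := f_{0_E}^{-1} \circ g \circ f_P$, both maps $C \to E$, the right-hand side $\langle h_1, h_2\rangle$ is by definition $(h_1)_* \circ (h_2)^*$, whereas the left-hand side expands to $f \circ g^\dagger = f \circ \phi_{\Theta_C}^{-1} \circ g^\vee \circ \phi_{\Theta_E}$. So it suffices to prove the two identities $(h_1)_* = f$ and $(h_2)^* = \phi_{\Theta_C}^{-1} \circ g^\vee \circ \phi_{\Theta_E}$.

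The first identity follows from Albanese functoriality. Since $f$ is a morphism of abelian varieties, we have $h_1(P) = f_{0_E}^{-1}(f(0)) = 0_E$; naturality of $f_P$ under pushforward then yields the commutative square $(h_1)_* \circ f_P = f_{0_E} \circ h_1$, whose right-hand side equals $f \circ f_P$ by construction of $h_1$. The universal property of the Jacobian recalled earlier then forces $(h_1)_* = f$, and the same argument gives $(h_2)_* = g$.

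The second identity reduces to the general duality formula
\[
\alpha^* \;=\; \phi_{\Theta_X}^{-1} \circ (\alpha_*)^\vee \circ \phi_{\Theta_Y}
\]
for every morphism $\alpha : X \to Y$ of smooth projective curves. I would derive this by dualizing the naturality square $\alpha_* \circ f_{P_X} = f_{\alpha(P_X)} \circ \alpha$, using that $\alpha^*$ is the Picard dual of $\alpha_*$ for curves, to obtain $\alpha^* \circ f_{\alpha(P_X)}^\vee = f_{P_X}^\vee \circ (\alpha_*)^\vee$, and then substituting the excerpt's identities $f_{P_X}^\vee = -\phi_{\Theta_X}^{-1}$ and $f_{\alpha(P_X)}^\vee = -\phi_{\Theta_Y}^{-1}$: the two minus signs cancel and the displayed formula drops out. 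Applying it with $\alpha = h_2$ and combining with the previous paragraph completes the argument, since then
\[
(h_1)_* \circ (h_2)^* \;=\; f \circ \phi_{\Theta_C}^{-1} \circ g^\vee \circ \phi_{\Theta_E} \;=\; f \circ g^\dagger \;=\; \langle f, g\rangle_\dagger.
\]
The main obstacle is the clean derivation of the duality formula: the fact that Picard pullback is dual to Albanese pushforward for curves is classical, but carefully tracking how it interacts with the two sign conventions in the excerpt requires some care; fortunately, the two factors of $-1$ cancel, so no sign ambiguity survives.
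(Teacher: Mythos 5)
Your proposal is correct and follows essentially the same route as the paper: both reduce the identity to the two claims $(f_{0_E}^{-1}\circ f\circ f_P)_* = f$ and $(f_{0_E}^{-1}\circ g\circ f_P)^* = \phi_{\Theta_C}^{-1}\circ g^\vee\circ \phi_{\Theta_E} = g^\dagger$, with the two minus signs coming from $f_P^\vee=-\phi_{\Theta_C}^{-1}$ and $f_{0_E}^\vee=-\phi_{\Theta_E}^{-1}$ cancelling exactly as you note. The only cosmetic differences are that the paper proves the first claim by a direct computation on degree-zero divisor classes over $\overline{k}$ rather than via the naturality square and the universal property, and obtains the second by dualizing the composition $f_{0_E}^{-1}\circ g\circ f_P$ directly (applying Lemma 6.8 of Milne twice) instead of first establishing your general formula $\alpha^* = \phi_{\Theta_X}^{-1}\circ(\alpha_*)^\vee\circ\phi_{\Theta_Y}$.
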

\begin{proof}
We prove this by showing $(f_{0_E}^{-1} \circ f \circ f_P)_* =f$ and $(f_{0_E}^{-1} \circ g \circ f_P)^* = g^\dagger$.

For the equality $(f_{0_E}^{-1} \circ f \circ f_P)_* =f$ it suffices to show equality on points over the algebraic closure of $k$. So let $D = \sum n_i P_i$ be a degree zero divisor representing a point in $J(C)(\overline k)$. Then 
\begin{align*}
&(f_{0_E}^{-1} \circ f \circ f_P)_*(\sum n_i P_i) = \sum n_i (f_{0_E}^{-1} \circ f) (P_i -P) =  (f_{0_E}^{-1} \circ f) (\sum n_i (P_i -P)) =\ldots \\
&\ldots = f_{0_E}^{-1}(f(\sum n_i P_i)-f(\sum n_i P)) =f_{0_E}^{-1}(f(\sum n_i P_i) -f(0)) = f(\sum n_i P_i).
\end{align*}
The equality $(f_{0_E}^{-1} \circ g \circ f_P)^* = g^\dagger$ follows since $^*$ and $^\vee$ denote the same operation and
$$(f_{0_E}^{-1} \circ g \circ f_P)^\vee = f_P^\vee \circ g^\vee \circ (f_{0_E}^{-1})^\vee = (-\phi_{\Theta_C})^{-1}\circ g^\vee \circ (-\phi_{\Theta_E}) =  \phi_{\Theta_C}^{-1}\circ g^\vee \circ \phi_{\Theta_E} = g^\dagger, $$
where the second equality follows by applying Lemma 6.8 of \cite{Milne1986} twice.

\end{proof}

\begin{prop}\label{prop_daggerpairing}
Let $C,E$ be curves over $\Q$ with $E$ being an elliptic curve. Then the dagger pairing is a positive definite symmetric bilinear form on $\Hom_\Q(J(C),J(E))$ taking values in $\End_\Q(J(E))=\mathbb \Z$.
\end{prop}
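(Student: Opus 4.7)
The plan is to verify the four conditions---that the dagger pairing takes values in $\Z$, is bilinear, is symmetric, and is positive definite---separately. The first three are formal; positive definiteness is the substantive step and reduces to the elementary fact that non-constant morphisms of curves have positive degree via \Cref{dagger-degree}.

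For the target, I would use that any elliptic curve $E/\Q$ has $\End_\Q(E) = \Z$ (complex multiplication on $E$ is never defined over $\Q$), so the identification $J(E) \cong E$ via $f_{0_E}$ yields $\End_\Q(J(E)) = \Z$. Bilinearity of $\langle \wideunder, \wideunder\rangle_\dagger$ then follows from additivity of the operator $\dagger$, which itself comes from additivity of $^\vee$ combined with pre-- and post-composition by the fixed polarizations, together with bilinearity of composition of morphisms of abelian varieties.

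For symmetry, I would first record two formal properties of $\dagger$: that $(h \circ k)^\dagger = k^\dagger \circ h^\dagger$ (unfold the definition using $(hk)^\vee = k^\vee \circ h^\vee$ and cancel the intermediate polarization) and that $(\dagger)^2 = \Id$ (using $\phi_{\Theta}^\vee = \phi_{\Theta}$ for a polarization together with double duality). These give $(f \circ g^\dagger)^\dagger = g \circ f^\dagger$, so $\langle f, g \rangle_\dagger^\dagger = \langle g, f\rangle_\dagger$. But $\langle f, g\rangle_\dagger \in \End_\Q(J(E)) = \Z$, and the Rosati involution $\dagger$ fixes this $\Z$ (because $[n]^\vee = [n]$ and the polarizations cancel), so $\langle f, g\rangle_\dagger = \langle g, f\rangle_\dagger$.

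The main obstacle is positive definiteness. Given a nonzero $f \in \Hom_\Q(J(C), J(E))$, set $\tilde f := f_{0_E}^{-1} \circ f \circ f_P \in \Hom_\Q(C,E)$; then by \Cref{dagger-degree}, $\langle f,f\rangle_\dagger = \tilde f_* \circ \tilde f^*$. The crux is to show $\tilde f$ is non-constant: the proof of \Cref{dagger-degree} yields $\tilde f_* = f$, while any constant morphism $C \to E$ kills all degree zero divisors, so $\tilde f$ constant would force $f = \tilde f_* = 0$, contradicting $f \neq 0$. With non-constancy in hand, \Cref{degreepairinglemma} gives $\tilde f_* \circ \tilde f^* = [\deg \tilde f]$, which is a positive integer, completing the proof.
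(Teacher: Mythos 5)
Your proof is correct and takes essentially the same route as the paper: symmetry via the anti-involution property of $\dagger$ together with the fact that the value lies in $\End_\Q(J(E))=\Z$ and is fixed by the Rosati involution, and positive definiteness by reducing through \Cref{dagger-degree} and \Cref{degreepairinglemma} to the positivity of $\deg(f_{0_E}^{-1}\circ f\circ f_P)$ (your explicit check that this composite is non-constant when $f\neq 0$ is a nice touch). The one point the paper makes explicit that you gloss over is that $C$ need not have a rational point, so $P$ must be chosen in $C(\overline{\Q})$ and $\left<f,f\right>_\dagger$ computed after base change, which is harmless since the value is an integer unchanged by extension of the base field.
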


\begin{proof}
    The dagger pairing is obviously bilinear. It is also symmetric because for $f,g\in\Hom_\Q(J(C),J(E))$ we have $$\left<f,g\right>_\dagger=f\circ g^\dagger=(g\circ f^\dagger)^\dagger=g\circ f^\dagger.$$ Here the last equality holds because $g\circ f^\dagger \in \End_\Q(J(E))$ is of the form $[n]$ for some $n \in \Z$ and $[n]^\dagger=[n]$. The positive definiteness follows from \Cref{dagger-degree} since we can compute $\left<f,f\right>_\dagger$ over $\overline \Q$ by choosing a $P \in C(\overline \Q)$ as follows:
    $$\left<f,f\right>_\dagger=\langle \, f_{0_E}^{-1} \circ f \circ f_P, \,\, f_{0_E}^{-1} \circ f \circ f_P\,\rangle=[\deg f_{0_E}^{-1} \circ f \circ f_P],$$
    and $\deg f_{0_E}^{-1} \circ f \circ f_P > 0$ if $f \neq 0$.
\end{proof}

\begin{remark}
If $E$ is a CM elliptic curve over $\C$, we could also consider the dagger pairing $\Hom_\C(J(C),J(E))\times\Hom_\C(J(C),J(E)) \to \End_\C(J(E))$. This is a positive definite \emph{hermitian} form instead of a symmetric one since the Rosati involution $^\dagger$ acts as complex conjugation on $\End_\C(J(E))$.
\end{remark}

\subsection{Degeneracy maps}\label{sectiondegeneracymaps}

Let $M$ and $N$ be positive integers such that $M\mid N$. For every divisor $d$ of $\frac{N}{M}$ there exists a degeneracy map
$$\iota_{d,N,M}:X_0(N)\to X_0(M), (E,G)\mapsto (E/G[d], (G/G[d])[M]).$$
The degeneracy map acts on $\tau \in \HH^*$ in the extended upper half-plane as
$$\iota_{d,N,M}(\tau)=d\tau.$$
From this or directly from the definition, we can easily see that when $dM \mid N$ and $d'N \mid N'$, then
\begin{align}\iota_{d,N,M}\circ\iota_{d',N',N}=\iota_{dd',N',M}. \label{eq:iota_composition}\end{align}
We want to describe $\left< \iota_{d_1,N,M},\iota_{d_2,N,M}\right>$ for different divisors $d_1$, $d_2$ of $\frac N M$ in terms of Hecke operators on $J_0(M)$ (the case $d_1=d_2$ is solved by \Cref{degreepairinglemma}).
We recall from Section 7.3 of \cite{diamond_im} that Hecke operators $T_n$ act on $Y_0(M)$ as
$$T_n(E,G)=\sum_{\substack{\#C=n \\ C\cap G=\{0\}}} (E/C, (G+C)/C)$$
and have the following properties:
\begin{alignat*}{2}
    T_{p^r}&=T_{p^{r-1}}T_p-pT_{p^{r-2}} &&\textup{ for primes } p\nmid M\textup{ and } r>1,\\
    T_{p^r}&=T_p^r &&\textup{ for primes } p\mid M\textup{ and } r>0,\\
    T_{mn}&=T_mT_n &&\textup{ if } (m,n)=1.
\end{alignat*}

We want to determine $\left<\iota_{d_1,N,M},\iota_{d_2,N,M}\right>$. When $N=Mp$ for a prime $p$, we already know from Section 7.3 of \cite{diamond_im} that $\left< \iota_{1,N,M},\iota_{p,N,M}\right>=T_p$. The remaining case is when $\frac{N}{M}$ is a composite number. Before we consider that case, we prove a technical group theory lemma.

\begin{lem}\label{grouplemma}
    Let $G$ be an abelian group of order $N$ that has a cyclic subgroup $G'$ of order $d$. If $dG\cong \Z/(N/d)\Z$, then $G$ is cyclic.
\end{lem}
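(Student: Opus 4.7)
The plan is to reduce to the $p$-primary components of $G$ and handle primes $p \mid d$ and $p \nmid d$ separately, using one of the two hypotheses in each case.

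First I would pin down $G[d]$ exactly. For any finite abelian group one has $|G| = |G[d]| \cdot |dG|$, so the hypothesis $|dG| = N/d$ forces $|G[d]| = d$. Since $G'$ is a cyclic subgroup of $G[d]$ of the same order $d$, we must have $G' = G[d]$, so $G[d]$ itself is cyclic of order $d$. After this step the hypothesis becomes symmetric between the two conditions ``$G[d]$ is cyclic of order $d$'' and ``$dG$ is cyclic of order $N/d$,'' which is what lets us split by prime.

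Next I would write $G = \bigoplus_p G_p$ as the internal direct sum of its $p$-Sylow subgroups; it suffices to show each $G_p$ is cyclic, since a direct sum of cyclic groups of pairwise coprime orders is cyclic. For primes $p \mid d$, the subgroup $G_p[p]$ is contained in the cyclic group $G[d]$, so $G_p[p]$ is cyclic of order at most $p$; the structure theorem for finite abelian $p$-groups then forces $G_p$ itself to be cyclic. For primes $p \nmid d$, multiplication by $d$ is invertible on the $p$-group $G_p$, so $G_p = dG_p$ is a subgroup of the cyclic group $dG$, and hence cyclic.

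There is no substantive obstacle here; the argument is purely structural. The only thing to be careful about is recognizing that the two hypotheses respectively control the primes dividing $d$ and the primes coprime to $d$, so one must combine them via the Sylow decomposition rather than attempting to use either hypothesis alone.
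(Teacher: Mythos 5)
Your proof is correct; every step checks out: the order count $|G| = |G[d]|\cdot|dG|$ does force $G[d] = G'$ to be cyclic of order $d$, the primary decomposition reduces to the Sylow subgroups $G_p$, the case $p \mid d$ follows since $G_p[p] \subseteq G[d]$ has rank at most one, and the case $p \nmid d$ follows since $G_p = dG_p \subseteq dG$ is a subgroup of a cyclic group. This is, however, a genuinely different route from the paper's. The paper works with the invariant factor decomposition $G \cong \Z/d_1\Z \times \cdots \times \Z/d_k\Z$ with $d_1 \mid \cdots \mid d_k$: the cyclic subgroup of order $d$ enters only through the divisibility $d \mid d_k$ of the exponent, and then $dG$ is observed to lie in $\Z/d_1\Z \times \cdots \times \Z/d_{k-1}\Z \times \Z/(d_k/d)\Z$, a group of order exactly $N/d$; since $dG$ is cyclic of that order, the divisibility chain forces $d_1 = \cdots = d_{k-1} = 1$. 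The paper's argument is more compact (one decomposition, one order comparison), while yours makes explicit the symmetry between the two conditions ``$G[d]$ cyclic of order $d$'' and ``$dG$ cyclic of order $N/d$'' and shows precisely which primes each hypothesis controls; the price is the extra bookkeeping of the Sylow splitting and the rank criterion for $p$-groups. Both are valid, elementary applications of the structure theorem.
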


\begin{proof}
    We know that $G\cong(\Z/(d_1\Z))\times\ldots\times(\Z/(d_k\Z))$, where $d_i$ are integers such that $d_1\mid\ldots\mid d_k$, $d_1\ldots d_k=N$ and $d\mid d_k$. Thus,
$$dG\leq(\Z/(d_1\Z))\times\ldots\times\left(\Z/\left(\frac{d_k}{d}\Z\right)\right).$$
    However, $dG\cong \Z/\left(\frac{N}{d}\Z\right)$ implying that $d_1=\ldots=d_{k-1}=1$ and $d_k=N$. 
\end{proof}

\begin{lem}\label{lem1}
    If $\frac{N}{M}$ is square-free, then $\left<\iota_{1,N,M},\iota_{N/M,N,M}\right>=T_{N/M}$. 
\end{lem}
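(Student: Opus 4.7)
The plan is to verify the identity on the class $[(E',G')]$ of a generic geometric point of $Y_0(M)$ and then conclude by Zariski density in $J_0(M)$. First I would expand
$$(\iota_{n,N,M})^*[(E',G')] = \sum_{(E,G)} [(E,G)]$$
as a sum over isomorphism classes of preimages, and match the data $(E,G)$ with cyclic subgroups $C \subseteq E'$ of order $n$. Given a preimage, the quotient $\phi : E \to E/G[n] = E'$ has $\ker\phi = G[n]$, its dual $\hat\phi : E' \to E$ has cyclic kernel $C$ of order $n$, and a cardinality count forces $G = \phi^{-1}(G')$ since both have order $nM$. Conversely, from any cyclic $C \subseteq E'$ of order $n$ we recover $E := E'/C$, the dual isogeny $\phi$, and $G := \phi^{-1}(G')$.

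The key step is deciding when $G := \phi^{-1}(G')$ is cyclic of order $nM$. As $G$ has order $nM$ and contains the cyclic subgroup $\ker\phi$ of order $n$, \Cref{grouplemma} reduces this to checking $nG \cong \Z/M\Z$. Using $\hat\phi \circ \phi = [n]_E$,
$$nG \;=\; \hat\phi(\phi(G)) \;=\; \hat\phi(G') \;=\; (G'+C)/C,$$
which is cyclic of order $M/|C \cap G'|$. Hence $G$ is cyclic exactly when $C \cap G' = \{0\}$. The same calculation pinpoints $G[M]$: inside a cyclic group of order $nM$, the $M$-torsion coincides with the image of multiplication by $n$, so $G[M] = nG = (G'+C)/C$. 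Pushing forward by $\iota_{1,N,M}$ therefore sends $[(E,G)]$ to $[(E'/C,(G'+C)/C)]$.

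Finally, the square-free hypothesis on $n$ lets us recognize the resulting sum as $T_n[(E',G')]$. Working $p$-by-$p$ with the primary decomposition and using $E'[p] \cong (\Z/p\Z)^2$, any subgroup of $E'$ of order $n$ (with $n$ square-free) is a product over $p \mid n$ of cyclic subgroups of order $p$, hence cyclic. So the defining formula $T_n(E',G') = \sum_{|C|=n,\,C \cap G' = 0}(E'/C,(G'+C)/C)$ ranges over exactly the same cyclic $C$ as $\left<\iota_{1,N,M},\iota_{N/M,N,M}\right>[(E',G')]$, and termwise equality gives the lemma. The main subtlety is the cyclicity dichotomy based on $C \cap G'$, for which \Cref{grouplemma} is tailor-made; without square-freeness the Hecke sum would pick up non-cyclic subgroups of order $n$ that have no analogue on the degeneracy side, which is precisely why the identity can fail in general.
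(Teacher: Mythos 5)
Your proof is correct and takes essentially the same approach as the paper: a summand-by-summand bijection on points of $Y_0(M)$ between preimages under the degeneracy pullback and order-$N/M$ subgroups $C$ meeting the level structure trivially, produced via dual isogenies, with \Cref{grouplemma} giving the cyclicity criterion and squarefreeness ensuring every subgroup occurring in the Hecke sum is cyclic. The paper's argument differs only in labeling and in spelling out the mutual-inverse verification (showing $g=f^\vee$) a bit more explicitly.
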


\begin{proof}
    Suppose that $(E,G)$ represents a point on $Y_0(M)$. We compute
    \begin{align*}
        \left<\iota_{1,N,M},\iota_{N/M,N,M}\right>(E,G)&=\sum_{\substack{E'/G'[N/M]=E \\ G'/G'[N/M]=G}} (E',G'[M]),\\
        T_{N/M}(E,G)&=\sum_{\substack{\#C=N/M \\ C\cap G=\{0\}}} (E/C,(G+C)/C)
    \end{align*}
    In order to prove that these sums are equal, it is enough to find a bijection between the summands. We will now construct a map that sends $(E',G'[M])$ to $(E/C,(G+C)/C)$ (i.e. define $C$ in terms of $E'$ and $G'$) and prove that it is a bijection.

    By definition, there is a map $f:E'\to E$ such that $\ker f=G'[N/M]$. We set
    $$C:=\ker f^\vee.$$
    This means that for the map $f^\vee:E\to E'$ we have $E'=E/C$. Further, 
    $$(G+C)/C=f^\vee(G)=f^\vee(f(G'))=\frac{N}{M}G'=G'[M],$$
    meaning that $G\cap C=\{0\}$ (since $f^\vee(G)$ is a group of order $M$) and $(E/C,(G+C)/C=(E',G'[M]))$.
    To prove bijectivity, we define the inverse map, i.e. we define $E'$ and $G'$ in terms of $C$.

    By definition, there is a map $g:E\to E/C$. We set
    \begin{align*}
        E'&:=E/C,\\
        G'&:=(g^\vee)^{-1}(G).
    \end{align*}
    First, we need to prove that $G'$ is a cyclic subgroup of order $N$. It is obviously a group of order $N$. We have
    $$\frac{N}{M}G'=(g\circ g^\vee)(g^\vee)^{-1}(G)=g(G)=(G+C)/C.$$
    Also, $\Z/\left(\frac{N}{M}\Z\right)\cong \ker g^\vee\leq G'$ so we can use \Cref{grouplemma} to conclude that $G'$ is a cyclic subgroup of order $N$. This further implies that $G'[M]=\frac{N}{M}G'=(G+C)/C$.

    To prove that these two maps are inverse to each other it is enough to prove $g=f^\vee$. This holds because
    $$\ker f=G'[N/M]=(g^\vee)^{-1}(G)[N/M]=M(g^\vee)^{-1}(G)=(g^\vee)^{-1}(MG)=(g^\vee)^{-1}(0)=\ker g^\vee.$$
\end{proof}

\begin{lem}\label{lem2}
    If $\frac{N}{M}$ is square-free, then $\left<\iota_{N/M,N,M},\iota_{1,N,M}\right>=w_M\circ T_{N/M}\circ w_M$.
\end{lem}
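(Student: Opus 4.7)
The plan is to reduce to \Cref{lem1} via the interaction between degeneracy maps and Atkin-Lehner involutions. The key identity I would establish first is that for any divisor $d$ of $N/M$,
$$\iota_{d,N,M}\circ w_N = w_M\circ \iota_{N/(Md),N,M}.$$
One verifies this on the upper half plane: both sides act on $\tau$ as $\tau \mapsto -1/(Md\tau)$. Specializing to $d=1$, this gives
$$\iota_{N/M,N,M}=w_M\circ \iota_{1,N,M}\circ w_N,$$
and analogously the other inclusion $\iota_{1,N,M} = w_M \circ \iota_{N/M,N,M} \circ w_N$.

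Next I would unfold the definition of the degree pairing as $\langle f,g\rangle = f_*\circ g^*$, substitute both decompositions, and push the star/lower-star through the compositions. Since $w_M$ and $w_N$ are automorphisms of $X_0(M)$ and $X_0(N)$ respectively, functoriality of $(\cdot)_*$ and $(\cdot)^*$ yields
$$\langle \iota_{N/M,N,M},\iota_{1,N,M}\rangle = (w_M)_*\circ (\iota_{1,N,M})_*\circ (w_N)_*\circ (w_N)^*\circ (\iota_{N/M,N,M})^*\circ (w_M)^*.$$
By \Cref{degreepairinglemma} applied to the involution $w_N$ we have $(w_N)_*\circ (w_N)^* = [\deg w_N] = \Id$, and similarly $(w_M)_* = (w_M)^*$ equals the induced map on $J_0(M)$ (also denoted $w_M$) because $w_M$ is an involution. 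Substituting collapses the expression to
$$w_M \circ \bigl((\iota_{1,N,M})_*\circ (\iota_{N/M,N,M})^*\bigr)\circ w_M = w_M\circ \langle \iota_{1,N,M},\iota_{N/M,N,M}\rangle\circ w_M.$$
Applying \Cref{lem1} to the inner pairing gives $w_M\circ T_{N/M}\circ w_M$, as desired.

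The computation is essentially mechanical once the commutation relation with Atkin-Lehner is in hand, so the main obstacle is really just that first lemma. One could prove it moduli-theoretically by tracking the action on pairs $(E,G)$, but I would prefer to justify it via the upper-half-plane description since \Cref{sectiondegeneracymaps} already records that $\iota_{d,N,M}$ acts as $\tau\mapsto d\tau$; the only care needed is verifying that the analytic identity descends to an identity of morphisms of $\Q$-schemes, which follows from density of CM points (or from direct moduli interpretations of $w_N$, $w_M$).
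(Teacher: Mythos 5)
Your argument is correct, but it is a genuinely different route from the paper's. The paper proves the lemma directly on the moduli side: it writes out $w_M\circ\left<\iota_{N/M,N,M},\iota_{1,N,M}\right>$ and $T_{N/M}\circ w_M$ as sums over pairs $(E',G'')$ resp.\ $(E/G/H,\dots)$ and constructs an explicit bijection between the summands (using the cyclicity criterion of \Cref{grouplemma}), so it never needs any compatibility between degeneracy maps and Atkin--Lehner involutions. You instead reduce \Cref{lem2} to \Cref{lem1} via the relation $\iota_{d,N,M}\circ w_N = w_M\circ \iota_{N/(Md),N,M}$, push $(\cdot)_*$ and $(\cdot)^*$ through the compositions, and cancel $(w_N)_*\circ(w_N)^*=\Id$ by \Cref{degreepairinglemma}; this is cleaner and makes transparent why the answer is the $w_M$-conjugate of the answer in \Cref{lem1}, at the cost of having to justify the Atkin--Lehner commutation relation (which you correctly flag: the analytic identity on $\HH^*$ determines the morphism over $\C$, and equality of $\Q$-morphisms can be checked after base change, or one argues moduli-theoretically with $w_N(E,G)=(E/G,E[N]/G)$). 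One small slip in your verification: the common value of $\iota_{d,N,M}\circ w_N$ and $w_M\circ\iota_{N/(Md),N,M}$ on $\tau$ is $-d/(N\tau)=-1/\bigl((N/d)\tau\bigr)$, not $-1/(Md\tau)$ (these agree only when $d^2=N/M$); the identity itself is nevertheless correct, and the remaining bookkeeping — including $(w_M)_*=(w_M)^*$ because $w_M$ is an involution, and that the square-free hypothesis enters only through \Cref{lem1} — is sound.
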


\begin{proof}
We will prove the equivalent statement $w_M\circ \left<\iota_{N/M,N,M},\iota_{1,N,M}\right>=T_{N/M}\circ w_M$. Suppose that $(E,G)$ represents a point on $Y_0(M)$. We compute (similarly as in the previous lemma)
\begin{align*}
    \left<\iota_{N/M,N,M},\iota_{1,N,M}\right>(E,G)&=\sum_{\substack{\#G'=N \\ G'[M]=G}} (E/G'[N/M],G'/G'[N/M])=\sum (E',G''),\\
    w_M\circ \left<\iota_{N/M,N,M},\iota_{1,N,M}\right>(E,G) &=\sum_{\substack{\#G'=N \\ G'[M]=G}} (E'/G'', E'[M]/G''),\\
    T_{N/M}\circ w_M(E,G)&=\sum_{\substack{\#H=N/M \\ (E[M]/G)\cap H=\{0\}}} (E/G/H, ((E[M]/G)+H)/H).
\end{align*}
It remains to prove that there is a bijection between the summands. We have the following situation:
$$E\mathop{\rightarrow}^{f_1} E'\mathop{\rightarrow}^{f_2}E'/G'',$$
$$E\mathop{\rightarrow}^{g_1}E/G\mathop{\rightarrow}^{g_2}E/G/H$$
where we know that $G'=\ker (f_2\circ f_1)$ because $\ker f_2=G'/G'[N/M]=G'/\ker f_1$.

We can express $G'$ in terms of $H$ as $G':=g_1^{-1}(H)=\ker (g_2\circ g_1)$. By \Cref{grouplemma}, this is a cyclic group of order $N$ because $\Z/(M\Z)\cong G\leq G'$ and
$$MG'=(g_1^\vee\circ g_1)g_1^{-1}(H)=g_1^\vee(H)\cong H\cong \Z/\left(\frac{N}{M}\Z\right).$$
Here the third equality holds because $(E[M]/G)\cap H=\{0\}$. Now, since $G'=g_1^{-1}(H)=\ker (g_2\circ g_1)$, we get $f_2\circ f_1=g_2\circ g_1$. Further, $G=\ker g_1\subset G'$ implying that $G'[M]=G$.

Let us now express $H$ in terms of $G'$. Since $G$ is a subgroup of $G'=\ker (f_2\circ f_1)$, there exist isogenies $g_1$ and $g_2$ such that $g_2\circ g_1=f_2\circ f_1$ and $G=\ker g_1$. We set $H:=\ker g_2$. It remains to prove that $(E[M]/G)\cap H=\{0\}$. This holds because
$$g_2(E[M]/G)=g_2(g_1(E[M]))=f_2(f_1(E[M]))=E[M]/(E[M]\cap G')\cong E[M]/G.$$
\end{proof}

\begin{remark}\label{squarefreeremark1}
    When $\frac{N}{M}$ is not square-free, then the proof of \Cref{lem1} and \Cref{lem2} still work provided one replaces the Hecke operator $T_{N/M}$ by a slightly different operator $T'_{N/M}$, which is defined as
    $$T'_n(E,G):=\sum_{\substack{\#C=n \\ C \textup{ cyclic}\\ C\cap G=\{0\}}} (E/C, (G+C)/C).$$ Note that the only difference between $T'$ and $T$ is that the sum in $T'$ is restricted to cyclic subgroups. If $N/M$ is coprime to $M$, then $T_{N/M}$, seen as element of $\End (J_0(M))$, can be easily expressed as $T_{N/M} = \sum_{m^2\mid N/M} \mu(m)T'_{N/(Mm^2)}$. Using the M\"obius inversion formula one then gets
    $$\left<\iota_{1,N,M},\iota_{N/M,N,M}\right>=T'_{N/M} =\sum_{m^2\mid N/M} \mu(m)T_{N/(Mm^2)},$$
    $$\left<\iota_{N/M,N,M},\iota_{1,N,M}\right>=w_M \circ T'_{N/M} \circ w_M =w_M \circ \sum_{m^2\mid N/M} \mu(m)T_{N/(Mm^2)} \circ w_M,$$
    where $\mu$ denotes the M\"obius function.
\end{remark}

\begin{proposition}\label{iotapairingcomposition}
Let $M,d_1,d_2$ be positive integers with $\gcd(d_1,d_2)=1$. Then
\begin{align*}\iota_{1,d_1d_2M,d_1M,*} \circ \iota_{1,d_1d_2M,d_2M}^* &= \iota_{1,d_1M,M}^* \circ \iota_{1,d_2M,M,*} \text{\quad and}\\
\left<\iota_{d_1,d_1d_2M,M},\iota_{d_2,d_1d_2M,M}\right> &= \left<\iota_{d_1,d_1M,M},\iota_{1,d_1M,M}\right> \circ \left<\iota_{1,d_2M,M},\iota_{d_2,d_2M,M}\right>.
\end{align*}
\end{proposition}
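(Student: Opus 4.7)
I would prove the two identities in order: first the base change-type statement about the $\iota_{1,\cdot,\cdot}$ maps, and then deduce the second identity from the first using the composition rule \eqref{eq:iota_composition} and functoriality of pushforward and pullback.

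\textbf{The first identity.} My plan is to show that the square
\[
\xymatrix{
X_0(d_1d_2M) \ar[r]^-{\iota_{1,d_1d_2M,d_2M}} \ar[d]_{\iota_{1,d_1d_2M,d_1M}} & X_0(d_2M) \ar[d]^{\iota_{1,d_2M,M}} \\
X_0(d_1M) \ar[r]_-{\iota_{1,d_1M,M}} & X_0(M)
}
\]
is Cartesian as smooth projective curves over $\Q$, and then invoke the base change formula for divisors on a Cartesian square of finite flat morphisms of smooth curves. The Cartesian property I would verify on moduli: the fiber product parametrizes triples $(E,G_1,G_2)$ with $G_i\subset E$ cyclic of order $d_iM$ and $G_1[M]=G_2[M]$. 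When $\gcd(d_1,d_2)=1$, the intersection $G_1\cap G_2$ has order dividing $\gcd(d_1M,d_2M)=M$ and contains $G_1[M]=G_2[M]$ of order $M$, hence $|G_1\cap G_2|=M$ and $|G_1+G_2|=d_1d_2M$. Writing $G_1+G_2\cong\Z/a\Z\times\Z/b\Z$ with $a\mid b$ as a subgroup of $E$, the existence of cyclic subgroups of orders $d_iM$ inside $G_1+G_2$ forces $\operatorname{lcm}(d_1M,d_2M)=d_1d_2M\mid b$, so $b=d_1d_2M$, $a=1$, and $G_1+G_2$ is cyclic of order $d_1d_2M$. The mutually inverse assignments $(E,G)\mapsto(E,G[d_1M],G[d_2M])$ and $(E,G_1,G_2)\mapsto(E,G_1+G_2)$ then identify the fiber product with $X_0(d_1d_2M)$ on the non-cuspidal locus, and extend across the cusps by the Deligne--Rapoport compactification (or simply because any birational map between smooth projective curves is an isomorphism).

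\textbf{The second identity.} Once the first identity is in hand, the composition rule \eqref{eq:iota_composition} gives
\[
\iota_{d_1,d_1d_2M,M}=\iota_{d_1,d_1M,M}\circ\iota_{1,d_1d_2M,d_1M},\qquad \iota_{d_2,d_1d_2M,M}=\iota_{d_2,d_2M,M}\circ\iota_{1,d_1d_2M,d_2M}.
\]
Applying $(f\circ g)_*=f_*\circ g_*$ and $(f\circ g)^*=g^*\circ f^*$ to the definition $\langle f,g\rangle=f_*\circ g^*$, the left-hand side of the second identity expands as
\[
\iota_{d_1,d_1M,M,*}\circ\iota_{1,d_1d_2M,d_1M,*}\circ\iota_{1,d_1d_2M,d_2M}^*\circ\iota_{d_2,d_2M,M}^*,
\]
and substituting the first identity for the two middle factors yields exactly the right-hand side. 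The main obstacle in the whole argument is the moduli-theoretic verification of the Cartesian property; the hypothesis $\gcd(d_1,d_2)=1$ is essential, both for forcing $|G_1\cap G_2|=M$ and for ensuring cyclicity of $G_1+G_2$, and without it the fiber product would differ from $X_0(d_1d_2M)$ and both identities would fail.
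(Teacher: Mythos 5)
Your proposal is correct in substance, and its mathematical core coincides with the paper's: the second identity is deduced from the first exactly as in the paper, via \eqref{eq:iota_composition} and functoriality of $(\,\cdot\,)_*$ and $(\,\cdot\,)^*$, and the group-theoretic facts you use to identify the fiber product with $X_0(d_1d_2M)$ (that $G_1\cap G_2$ has order $M$ and that $G_1+G_2$ is cyclic of order $d_1d_2M$ when $\gcd(d_1,d_2)=1$) are precisely the content of the paper's bijection $H_2=H_1[d_1M]$, $H_1=H_2+G$ between the summands of the two sides evaluated at a moduli point $(E,G)$. The difference is packaging: the paper verifies the first identity by computing $\iota_{1,d_1d_2M,d_1M,*}\circ\iota_{1,d_1d_2M,d_2M}^*(E,G)$ and $\iota_{1,d_1M,M}^*\circ\iota_{1,d_2M,M,*}(E,G)$ directly and matching summands, whereas you assert that the square of degeneracy maps is Cartesian and invoke flat base change. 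Be careful with that assertion: the scheme-theoretic fiber product $X_0(d_1M)\times_{X_0(M)}X_0(d_2M)$ of the coarse curves can be singular at points where both degeneracy maps ramify over the same point of $X_0(M)$ (this can happen over cusps and elliptic points), so $X_0(d_1d_2M)$ is in general only the normalization of that fiber product, and your fallback ``a birational map between smooth projective curves is an isomorphism'' does not apply because the fiber product is not known to be smooth. This does not sink the argument: the claimed identity is one of homomorphisms $J_0(d_2M)\to J_0(d_1M)$, so it suffices to check it on divisor classes supported on the dense open locus where your moduli identification is an honest isomorphism (such classes generate the Jacobian), which is in effect what the paper's pointwise computation does. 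So your route buys a cleaner conceptual statement (base change along a Cartesian square) at the price of this normalization/ramification caveat, which should be made explicit; the paper's direct verification on moduli points sidesteps it.
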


\begin{proof}
Let $E$ be an elliptic curve with a cyclic subgroup $G$ of order $d_2M$. The first equality can be verified on a pair $(E,G)$ since 
\begin{align*}
\iota_{1,d_1d_2M,d_1M,*} \circ \iota_{1,d_1d_2M,d_2M}^*(E,G) &= \sum_{\substack{H_1 \supseteq G \textup{ cyclic} \\ \#H_1 = d_1d_2M}} (E,H_1[d_1M]) \\
&= \sum_{\substack{H_2 \supseteq G[M]\textup{ cyclic} \\ \#H_2 = d_1M}} (E,H_2) \\
&= \iota_{1,d_1M,M}^* \circ \iota_{1,d_2M,M,*}(E,G).
\end{align*}
Furthermore, $H_1$ and $H_2$ are related to each other via $H_2=H_1[d_1M]$ and $H_1=H_2+G$.
The second equality follows from the first because
\begin{align*}
    \left<\iota_{d_1,d_1d_2M,M},\iota_{d_2,d_1d_2M,M}\right> 
    &= \iota_{d_1,d_1d_2M,M,*} \circ \iota_{d_2,d_1d_2M,M}^*\\
    &=\iota_{d_1,d_1M,M,*}\circ\iota_{1,d_1d_2M,d_1M,*} \circ \iota_{1,d_1d_2M,d_2M}^* \circ \iota_{d_2,d_2M,M}^* \\
    &= \iota_{d_1,d_1M,M,*}\circ\iota_{1,d_1M,M}^* \circ \iota_{1,d_2M,M,*} \circ \iota_{d_2,d_2M,M}^*\\
    &=\left<\iota_{d_1,d_1M,M},\iota_{1,d_1M,M}\right>\circ \left<\iota_{1,d_2M,M},\iota_{d_2,d_2M,M}\right>.
\end{align*}
\end{proof}

Combining the previous results we get the following proposition.

\begin{prop}\label{prop:iota_pairing}
    Assume that $\frac{N}{M}$ is either squarefree or coprime to $M$ and let $d_1$ and $d_2$ be divisors of $\frac{N}{M}$. We write just $\gcd$ instead of $\gcd(d_1,d_2)$ and $\textup{lcm}$ instead of $\textup{lcm}(d_1,d_2)$ for simplicity. Then
    \begin{align*}
        \left<\iota_{d_1,N,M},\iota_{d_2,N,M}\right>=&\ w_M\circ  \left(\sum_{m^2|d_1/\gcd} \mu(m)T_{d_1/(m^2\gcd)}\right)\circ w_M\circ\\
        &\circ \left(\sum_{m^2|d_1/\gcd} \mu(m)T_{d_2/(m^2\gcd)}\right)\circ[\deg\iota_{\gcd,N,M\textup{lcm}/\gcd}].
    \end{align*}
\end{prop}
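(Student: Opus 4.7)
The plan is to reduce the general case to that of coprime indices handled by Proposition \ref{iotapairingcomposition}, by factoring out a common inner degeneracy map corresponding to $\gcd(d_1,d_2)$. Write $g := \gcd(d_1,d_2)$, $\ell := \textup{lcm}(d_1,d_2)$, and $a_i := d_i/g$, so that $\gcd(a_1,a_2)=1$ and $a_1a_2 = \ell/g$. Using the composition identity \eqref{eq:iota_composition} applied through the intermediate level $M a_1 a_2 = M\ell/g$, I would factor
\[ \iota_{d_i,N,M} = \iota_{a_i,\,M a_1 a_2,\,M} \circ \iota_{g,\,N,\,M a_1 a_2}, \qquad i=1,2. \]
Both factors are well-defined since $a_i M \mid M a_1 a_2$ and $g \cdot M a_1 a_2 = M\ell \mid N$ (using $\ell \mid N/M$).

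Plugging this decomposition into the pairing and using $(fh)_* = f_* h_*$ together with $(fh)^* = h^* f^*$, the common inner factor $\iota_{g,N,M a_1a_2}$ produces the scalar $[\deg \iota_{g,N,M a_1 a_2}]$ by Lemma \ref{degreepairinglemma}, leaving
\[ \left<\iota_{d_1,N,M},\iota_{d_2,N,M}\right> = [\deg \iota_{g,N,M a_1 a_2}] \circ \left<\iota_{a_1,M a_1 a_2,M},\,\iota_{a_2,M a_1 a_2,M}\right>. \]
Since integer multiplication is central in $\End(J_0(M))$, this scalar can be moved to the right end of the composition to match the statement, and the identification $M a_1 a_2 = M\textup{lcm}/\gcd$ shows it equals $[\deg \iota_{\gcd,N,M\textup{lcm}/\gcd}]$.

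The remaining pairing has coprime indices $a_1,a_2$, so Proposition \ref{iotapairingcomposition} decomposes it as
\[ \left<\iota_{a_1,M a_1,M},\iota_{1,M a_1,M}\right> \circ \left<\iota_{1,M a_2,M},\iota_{a_2,M a_2,M}\right>. \]
Each factor is a ``pure'' pairing of the type treated in Section \ref{subsectiondegreepairing}. Either hypothesis on $N/M$ descends to $a_i$: if $N/M$ is squarefree then so is each $a_i$, and Lemmas \ref{lem1}, \ref{lem2} apply directly; if $N/M$ is coprime to $M$ then so is each $a_i$ (since $a_1 a_2 = \ell/g \mid N/M$), and Remark \ref{squarefreeremark1} provides the Möbius-inverted expressions
\[ \left<\iota_{1,M a_2,M},\iota_{a_2,M a_2,M}\right> = \sum_{m^2\mid a_2}\mu(m)T_{a_2/m^2}, \]
\[ \left<\iota_{a_1,M a_1,M},\iota_{1,M a_1,M}\right> = w_M \circ \Bigl(\sum_{m^2\mid a_1}\mu(m)T_{a_1/m^2}\Bigr)\circ w_M. \]
In the squarefree case only the $m=1$ term survives, recovering Lemmas \ref{lem1} and \ref{lem2}.

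Substituting these formulas into the factored expression and rewriting $a_i/m^2 = d_i/(m^2 \gcd)$ yields exactly the claimed identity. The only genuinely delicate points are (i) verifying that the two hypotheses on $N/M$ each descend to the coprime factors $a_1,a_2$, and (ii) tracking the order of composition so that the central scalar ends up on the right; both are resolved by the observations above. I do not anticipate any substantive obstacle beyond this notational bookkeeping.
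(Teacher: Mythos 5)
Your proposal is correct and follows essentially the same route as the paper's own proof: factor both maps through $\iota_{\gcd,N,M\textup{lcm}/\gcd}$, extract the scalar $[\deg\iota_{\gcd,N,M\textup{lcm}/\gcd}]$ via Lemma \ref{degreepairinglemma}, split the remaining coprime pairing with Proposition \ref{iotapairingcomposition}, and evaluate the factors with Lemmas \ref{lem1}, \ref{lem2} and Remark \ref{squarefreeremark1}. Your explicit check that each hypothesis on $N/M$ descends to the coprime parts $d_i/\gcd$ is a point the paper leaves implicit, but the argument is the same.
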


\begin{proof}
 Note that $$\iota_{d_1,N,M} = \iota_{d_1/\gcd,M\textup{lcm}/\gcd,M} \circ \iota_{\gcd,N,M\textup{lcm}/\gcd}$$
and similarly for $d_2$. This shows that $\iota_{d_1,N,M}$ and $\iota_{d_2,N,M}$ both factor through the map $\iota_{\gcd,N,M\textup{lcm}/\gcd}$ allowing us to write
\begin{align*}
    \left<\iota_{d_1,N,M},\iota_{d_2,N,M}\right>&=\iota_{d_1/\gcd,M\textup{lcm}/\gcd,M,*}\circ\iota_{\gcd,N,M\textup{lcm}/\gcd,*}\circ\iota_{\gcd,N,M\textup{lcm}/\gcd}^*\circ\iota_{d_2/\gcd,M\textup{lcm}/\gcd,M}^*\\
    &=\iota_{d_1/\gcd.M\textup{lcm}/\gcd,M,*}\circ[\deg\iota_{\gcd,N,M\textup{lcm}/\gcd}]\circ\iota_{d_2/\gcd,M\textup{lcm}/\gcd,M}^*\\
    &=\left<\iota_{d_1/\gcd,M\textup{lcm}/\gcd,M},\iota_{d_2/\gcd,M\textup{lcm}/\gcd,M}\right>\circ[\deg\iota_{\gcd,N,M\textup{lcm}/\gcd}].
\end{align*}
    
    Further, by \Cref{iotapairingcomposition} we have   $$\left<\iota_{d_1/\gcd,M\textup{lcm}/\gcd,M},\iota_{d_2/\gcd,M\textup{lcm}/\gcd,M}\right>=\left<\iota_{d_1/\gcd,Md_1/\gcd,M},\iota_{1,Md_1/\gcd,M}\right>\circ\left<\iota_{1,Md_2/\gcd,M},\iota_{d_2/\gcd, Md_2/\gcd, M}\right>.$$
    Now we get the desired result by applying \Cref{lem1}, \Cref{lem2}, and \Cref{squarefreeremark1}.
\end{proof}

\begin{thm}\label{pairingcomputation}
Using the assumptions and notation of \Cref{prop:iota_pairing}, let $E$ be an elliptic curve of conductor $M$ with corresponding newform $\sum_{n=1}^\infty a_nq^n$ and let $f:X_0(M)\to E$ be the modular parametrization of $E$. If we define
$$a=\left(\sum_{m^2\mid (d_1/\gcd)} \mu(m)a_{d_1/(\gcd m^2)}\right)\left(\sum_{m^2\mid (d_2/\gcd)} \mu(m)a_{d_2/(\gcd m^2)}\right),$$
where $\mu$ is the M\"obius function (when $\frac{d_1d_2}{\gcd^2}$ is squarefree, $a$ is equal to $a_{d_1d_2/\gcd^2}$, similarly as in \Cref{squarefreeremark1}), then
$$\left<f\circ\iota_{d_1,N,M},f\circ\iota_{d_2,N,M}\right>=\left[a\cdot \frac{\psi\left(N\right)}{\psi\left(\frac{M\textup{lcm}}{\gcd}\right)}\cdot \deg f \right].$$
Here $\psi(N)=N\prod_{q\mid N}(1+\frac{1}{q})$, as in \Cref{oggineq1}.
\end{thm}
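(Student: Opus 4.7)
The strategy is to combine the Hecke-theoretic expansion of \Cref{prop:iota_pairing} with the fact that the modular parametrization $f\colon X_0(M)\to E$ makes $E$ into an eigenquotient of $J_0(M)$ for the entire Hecke algebra.

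\textbf{Step 1 (Setup).} First I would observe that by functoriality of pushforward and pullback,
$$\left<f\circ\iota_{d_1,N,M},\,f\circ\iota_{d_2,N,M}\right>\;=\;f_*\circ\left<\iota_{d_1,N,M},\iota_{d_2,N,M}\right>\circ f^*,$$
and then substitute \Cref{prop:iota_pairing} to rewrite the right-hand side as
$$f_*\;\circ\; w_M\;\circ\; S_1\;\circ\; w_M\;\circ\; S_2\;\circ\; [D]\;\circ\; f^*,$$
where $S_i := \sum_{m^2\mid d_i/\gcd}\mu(m)\,T_{d_i/(m^2\gcd)}$ and $D := \deg\iota_{\gcd,N,M\textup{lcm}/\gcd}$.

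\textbf{Step 2 (Eigenvalue collapse).} Because $\sum_{n\geq 1} a_n q^n$ is the normalized newform attached to $E$ and $f$ realizes $E$ as the corresponding quotient of $J_0(M)$, the Hecke operators and $w_M$ act on the quotient $E$ through the eigenvalues of this newform. Thus for every $n\geq 1$,
$$f_*\circ T_n \;=\; [a_n]\circ f_*,\qquad f_*\circ w_M \;=\; [\epsilon_M]\circ f_*,$$
where $\epsilon_M\in\{\pm 1\}$ is the Atkin--Lehner eigenvalue of the newform. I would then iteratively push $f_*$ from the left past each factor in the product of Step 1. Since the resulting integer multiplications are central in $\End(E)$, all the factors commute past each other, and the expression collapses to
$$\bigl[\epsilon_M^2\,\alpha_1\,\alpha_2\,D\bigr]\circ f_*\circ f^* \;=\; \bigl[\alpha_1\,\alpha_2\,D\bigr]\circ f_*\circ f^*,$$
where $\alpha_i := \sum_{m^2\mid d_i/\gcd}\mu(m)\,a_{d_i/(m^2\gcd)}$, and $\alpha_1\alpha_2 = a$ by definition. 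The two $w_M$ contributions cancel automatically as $\epsilon_M^2=1$, which is why the final answer is insensitive to the Atkin--Lehner sign.

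\textbf{Step 3 (Finish).} \Cref{degreepairinglemma} gives $f_*\circ f^* = [\deg f]$. Combining this with the standard formula that every degeneracy map $X_0(N)\to X_0(M')$ with $M'\mid N$ has degree $[\Gamma_0(M'):\Gamma_0(N)] = \psi(N)/\psi(M')$ independently of the choice of divisor of $N/M'$, applied with $M' = M\textup{lcm}/\gcd$, yields $D = \psi(N)/\psi(M\textup{lcm}/\gcd)$ and produces the claimed identity.

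\textbf{Main obstacle.} Mechanically the proof is bookkeeping; no single step is technically demanding. The one non-formal ingredient is the identity $f_*\circ T_n = [a_n]\circ f_*$ for \emph{all} $n$, including those sharing a factor with $M$. This is exactly the statement that $E$ is an eigenquotient of $J_0(M)$ for the full Hecke algebra, which holds because $E$ has conductor precisely $M$ and so corresponds to a genuine newform of level $M$; it is crucial here that the hypothesis of \Cref{prop:iota_pairing} restricts to the squarefree or coprime regime where the indices $d_i/(m^2\gcd)$ appearing in $S_1,S_2$ are precisely the ones for which the M\"obius-twisted Hecke decomposition of \Cref{squarefreeremark1} is valid.
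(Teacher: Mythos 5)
Your proposal is correct and follows essentially the same route as the paper: substitute \Cref{prop:iota_pairing}, let the Hecke operators and $w_M$ act on the $E$-isotypic part through the newform eigenvalues (the paper phrases this via multiplicity one of $E$ in $J_0(M)$ rather than the intertwining relation $f_*\circ T_n=[a_n]\circ f_*$, but it is the same fact), cancel the two Atkin--Lehner signs, and finish with $f_*\circ f^*=[\deg f]$ together with $\deg\iota_{\gcd,N,M\textup{lcm}/\gcd}=\psi(N)/\psi(M\textup{lcm}/\gcd)$.
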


\begin{proof}
    For the sake of simplicity, we will assume that $\frac{d_1d_2}{\gcd^2}$ is squarefree. We have
    $$\left<f\circ\iota_{d_1,N,M},f\circ\iota_{d_2,N,M}\right>=f_*\circ \iota_{d_1,N,M,*}\circ\iota_{d_2,N,M}^*\circ f^*=f_*\circ\left<\iota_{d_1,N,M},\iota_{d_2,N,M}\right>\circ f^*.$$
    Let $E'$ be $f^*(E)\subset J_0(M)$. Then $E'$ is an elliptic curve isogenous to $E$. Since, up to isogeny, $E$ occurs with multiplicity one in the factorization of $J_0(M)$ (because $\textup{cond}(E)=M$), it follows that $\left<\iota_{d_1,N,M},\iota_{d_2,N,M}\right>$ is a rational endomorphism of $E'$. Therefore, $\left<\iota_{d_1,N,M},\iota_{d_2,N,M}\right>$ is of the form $[k]$ for some $k\in\Z$ and we get that
    $$f_*\circ\left<\iota_{d_1,N,M},\iota_{d_2,N,M}\right>\circ f^*=f_*\circ f^*\circ\left<\iota_{d_1,N,M},\iota_{d_2,N,M}\right>=[\deg f]\circ\left<\iota_{d_1,N,M},\iota_{d_2,N,M}\right>.$$
    \Cref{prop:iota_pairing} now tells us that
    \begin{align}\label{eq1}
    \left<f\circ\iota_{d_1,N,M},f\circ\iota_{d_2,N,M}\right>=w_M\circ T_{d_1/\gcd}\circ w_M\circ T_{d_2/\gcd}\circ [\deg\iota_{\gcd,N,M\textup{lcm}/\gcd}]\circ [\deg f].
    \end{align}
    We see that here both the Atkin-Lehner involution $w_M$ and Hecke operators act on $E$. As $w_M$ acts as $\pm1$ on $E$, the action of $w_M$ cancels itself. Furthermore, the Hecke operators $T_n$ act on $E$ as multiplication by $a_n$ (the coefficient in the corresponding newform). Therefore,
    \begin{align*}
        \left<f\circ\iota_{d_1,N,M},f\circ\iota_{d_2,N,M}\right>&= [a_{d_1/\gcd}]\circ [a_{d_2/\gcd}]\circ [\deg\iota_{\gcd,N,M\textup{lcm}/\gcd}]\circ [\deg f]\\
        &=[a_{d_1d_2/\gcd^2}\cdot\deg\iota_{\gcd,N,M\textup{lcm}/\gcd}\cdot\deg f].
        \end{align*}
    The last equality holds due to the fact that $a_na_m=a_{nm}$ for relatively prime $m,n$. Finally, since the degrees of all degeneracy maps from $X_0(N)$ to $X_0(M\textup{lcm}/\gcd)$ are equal to $\frac{\psi\left(N\right)}{\psi\left(\frac{M\textup{lcm}}{\gcd}\right)}$, we get the desired formula.

    If $\frac{d_1d_2}{\gcd^2}$ is not squarefree, in \ref{eq1} we will get the M\"obius sums from \Cref{squarefreeremark1} instead of $T_{d_i/\gcd}$. We can then use the same argument to get the desired result since the sums $\sum_{m^2\mid (d_i/\gcd)} \mu(m)T_{d_i/(\gcd m^2)}$ act on $E$ as $\sum_{m^2\mid (d_i/\gcd)} \mu(m)a_{d_i/(\gcd m^2)}$.
\end{proof}


This result is useful because all items on the right-hand side are easily computable ($\deg f$ is the modular degree of $E$ and $a$ is determined by the coefficients of the corresponding newform of $E$), and in fact already have been computed for all elliptic curves of conductor $\leq500,000$ and $\textup{lcm}(d_1,d_2)/\gcd(d_1,d_2) \leq 1,000$. This data is available in the LMFDB \cite{lmfdb}.

\begin{remark}\label{pairingcomputationmagma}
Alternatively, we can compute $\left<f\circ\iota_{d_1,N,M},f\circ\iota_{d_2,N,M}\right>$ using either Sage or Magma since $\iota_{d_1,N,M,*}$ and $\iota_{d_2,N,M}^*$ are explicitly computable on modular symbols, see Proposition 8.26 of \cite{stein07}.
\end{remark}

\section{\texorpdfstring{$d$}{d}-elliptic modular curves}\label{section6}

\begin{definition}
Let $d$ be a positive integer. We call a curve $C$ over a field $k$ $d$-elliptic if there exists an elliptic curve $E$ over $k$ and a morphism $C \to E$ of degree $d$ defined over $k$. If in addition $k$ is a number field and $E$ has positive Mordell-Weil rank, then we call $C$ positive rank $d$-elliptic.
\end{definition}

In this section, we will describe some ideas that allow one to determine for given integers $N$ and $d$ whether $X_0(N)$ is $d$-elliptic over $\mathbb Q$.

If we fix a point $P\in X_0(N)(\Q)$, then, as we have seen in \Cref{subsectiondefinitions}, there exists an element of $\textup{Hom}_\Q(X_0(N),E)$ of degree $d$ if and only if there exists an element of $\textup{Hom}_{\Q,P}(X_0(N),E)$ of degree $d$. Furthermore, by the universal property of $J_0(N)$, every $f\in\textup{Hom}_{\Q,P}(X_0(N),E)$ factors uniquely through $J_0(N)$ via the map $f_P$.

We define a map $\textup{Hom}_\Q(X_0(N),E)\to\textup{Hom}_\Q(J_0(N),E)$ as follows:
$$f\mapsto t_{-f(P)}\circ f\mapsto \textup{homomorphism induced from } t_{-f(P)}\circ f\textup{ by the universal property of } J_0(N).$$
In this section, to make the text more readable, we sometimes use a slight abuse of notation. We will sometimes work with maps defined on $X_0(N)$ as if they were defined on $J_0(N)$. For example, in the proof of \Cref{tetraellthm}, we will say that the maps $f\circ d_i:X_0(N)\to E$ form a basis for $\textup{Hom}_\Q(J_0(N),E)$, but this will actually hold for the images of $f\circ d_i$ via the above map. 

\begin{definition}
Let $A$ and $B$ be abelian varieties over a field $k$ with $B$ simple. An abelian variety $A'$ together with a quotient map $\pi: A \to A'$ is an optimal $B$-isogenous quotient if $A'$ is isogenous to $B^n$ for some integer $n$ and every morphism $A \to B'$ with $B'$ isogenous to $B^m$ for some integer $m$ uniquely factors via $\pi$.
\end{definition}

\begin{prop}
Optimal $B$-isogenous quotients exist, and are unique up to a unique isomorphism.
\end{prop}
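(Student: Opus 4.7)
The plan is to construct $A'$ as a quotient $A/A_0$, where $A_0 \subseteq A$ is the smallest abelian subvariety for which $A/A_0$ is isogenous to a power of $B$, and then verify that $\pi \colon A \twoheadrightarrow A/A_0$ satisfies the required universal property. First I would introduce the set
$$\mathcal{S} := \{\, C \subseteq A \text{ abelian subvariety with } A/C \text{ isogenous to a power of } B \,\},$$
which is nonempty since $A \in \mathcal{S}$ (with $n=0$). The key stability property is that if $C_1, C_2 \in \mathcal{S}$, then the identity component $(C_1 \cap C_2)^0$ is again in $\mathcal{S}$: the natural map $A \to A/C_1 \times A/C_2$ has kernel $C_1 \cap C_2$, so $A/(C_1 \cap C_2)^0$ is isogenous to an abelian subvariety of the target, and Poincar\'e's complete reducibility combined with the simplicity of $B$ ensures that any abelian subvariety of a variety isogenous to a power of $B$ is again isogenous to a power of $B$. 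Since dimensions are bounded below, iterating intersections yields a minimum element $A_0 \in \mathcal{S}$, and I set $A' := A/A_0$ with $\pi$ the quotient map.

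Next I would verify the universal property. Given $f \colon A \to B'$ with $B'$ isogenous to $B^m$, the identity component $\ker(f)^0$ lies in $\mathcal{S}$: indeed $A/\ker(f)^0$ is isogenous to the image $f(A) \subseteq B'$, which is an abelian subvariety of $B'$ and hence isogenous to a power of $B$ by the same Poincar\'e argument. By the minimality of $A_0$ we have $A_0 \subseteq \ker(f)^0 \subseteq \ker(f)$, so $f$ factors as $\bar f \circ \pi$ for some morphism $\bar f \colon A' \to B'$, and this factorization is unique because $\pi$ is surjective.

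Finally, uniqueness of $(A',\pi)$ up to unique isomorphism is a formal Yoneda-style consequence. If $(A'',\pi'')$ is another optimal $B$-isogenous quotient, the universal properties applied in both directions produce unique morphisms $\phi \colon A' \to A''$ and $\psi \colon A'' \to A'$ with $\phi \circ \pi = \pi''$ and $\psi \circ \pi'' = \pi$; the uniqueness clause in the universal property of $\pi$, applied to the factorization $\pi = (\psi \circ \phi) \circ \pi$, then forces $\psi \circ \phi = \mathrm{id}_{A'}$, and symmetrically $\phi \circ \psi = \mathrm{id}_{A''}$. I expect the main technical obstacle to be the claim that any abelian subvariety of something isogenous to a power of $B$ is again isogenous to a power of $B$; this is where the simplicity hypothesis on $B$ is essential, and it reduces to Poincar\'e reducibility together with the vanishing $\Hom(B_1,B_2)=0$ between non-isogenous simple abelian varieties. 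A second small point to handle cleanly is distinguishing $C_1 \cap C_2$ from its identity component, but this is harmless since passing to the connected component only introduces a finite isogeny.
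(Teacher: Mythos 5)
Your proof is correct, but it takes a different route from the paper. The paper argues directly from Poincar\'e reducibility: it chooses simple abelian subvarieties $A_1,\ldots,A_s$ whose product is isogenous to $A$, reorders so that exactly $A_1,\ldots,A_n$ are isogenous to $B$, and sets $A' = A/(A_{n+1}+\cdots+A_s)$; the universal property then follows because each simple $A_i$ with $i>n$ admits no nonzero map to anything isogenous to $B^m$ and hence lies in $\ker f$. You instead construct $A'$ intrinsically as $A/A_0$ with $A_0$ the smallest abelian subvariety such that $A/A_0$ is isogenous to a power of $B$, obtained from intersection-stability of your family $\mathcal S$ plus a minimal-dimension argument, and you verify the universal property via $A_0 \subseteq \ker(f)^0$. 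Your route avoids choosing a decomposition and makes the factorization argument cleaner, at the cost of needing the auxiliary lemma that every abelian subvariety of something isogenous to $B^m$ is again isogenous to a power of $B$ (itself a consequence of Poincar\'e reducibility and $\Hom$-vanishing between non-isogenous simples, as you note); the paper's explicit construction has the practical advantage of exhibiting the integer $n$ directly from the isogeny decomposition, which is how it is used later for $J_0(N)$. Two small points to tighten: phrase the existence of a minimum element of $\mathcal S$ as ``take $A_0\in\mathcal S$ of minimal dimension; for any $C\in\mathcal S$ the subvariety $(A_0\cap C)^0$ lies in $\mathcal S$ and has the same dimension as $A_0$, hence equals $A_0$, so $A_0\subseteq C$'' rather than ``iterating intersections''; and note that since $A_0$ is reduced and connected, $A_0\subseteq\ker(f)^0$ already gives scheme-theoretic containment in $\ker f$, so $f$ factors uniquely through the quotient map $\pi$ because $\pi$ is faithfully flat.
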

\begin{proof}
By the Poincar\'e reducibility theorem (\cite[Chapter 10, Proposition 10.1]{Milne1986AV} or \cite[Theorem 5.3.7]{Birkenhake2004}), there exists an integer $s$ and simple abelian subvarieties $A_1,\ldots, A_s$ of $A$ such that the sum map $A_1 \times \cdots \times A_s \to A$ is an isogeny. By reordering the $A_i$ if necessary we can let $n \leq s$ be the integer such that $A_1,\ldots, A_n$ are isogenous to $B$ while $A_{n+1}, \ldots, A_s$ are not. Define $A' = A/(A_{n+1}+\cdots+A_s)$ then $A'$ is isogenous to $B^n$ since the composition of the maps $A_1 \times \cdots \times A_n \to A \to A'$ is an isogeny.

To show that the quotient $\pi: A \to A'$ is an optimal $B$-isogenous quotient, let $B'$ be an abelian variety isogenous to $B^m$ and let $f: A \to B'$ be a morphism. Since $B'$ is isogenous to $B^m$ but all the $A_i$ for $i>n$ are not isogenous to $B$, meaning that for $i>n$, $A_i \subset \ker f$. However, $A'$ was obtained by quotienting out the $A_i$ with $i>n$ meaning that $f$ factors uniquely via $\pi$ which is what we needed to prove.

The uniqueness up to unique isomorphism follows formally because optimal $B$-isogenous quotients are defined using a universal property.
\end{proof}

\begin{remark}When $E$ is the strong Weil curve over $\mathbb Q$ of conductor $M$, then the optimal $E$-isogenous quotient of $J_0(M)$ is just the strong Weil parameterization of $E$.
\end{remark}

The dual notion of optimal $B$-isogenous quotient is the following:
\begin{definition}
Let $A$ and $B$ be abelian varieties over a field $k$ with $B$ simple. An abelian variety $A'$ together with an isogeny $\iota: A' \to A$ is a maximal $B$-isogenous subvariety if $A'$ is isogenous to $B^n$ for some integer $n$ and every morphism $B' \to A$ with $B'$ isogenous to $B^m$ for some integer $m$ uniquely factors via $\iota$.
\end{definition}
The following follows formally from duality since we can just take $\iota = \pi^\vee$ where $\pi$ is an optimal $B$-isogenous quotient of $A^\vee$. The reason for calling $A'$ a subvariety is because, by the universal property, $\iota$ actually induces an isomorphism between $A'$ and $\iota(A')$.
\begin{prop}
Maximal $B$-isogenous subvarieties exist, and are unique up to a unique isomorphism.
\end{prop}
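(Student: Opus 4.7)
The plan is to deduce this from the previous proposition by a duality argument, exactly as hinted in the paragraph preceding the statement. Two preliminary facts make the duality clean: since $B$ is simple, its dual $B^\vee$ is simple and isogenous to $B$ via any polarisation of $B$; and dualisation is a contravariant equivalence on abelian varieties that preserves isogeny classes, so $X$ is isogenous to $B^n$ if and only if $X^\vee$ is.

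Concretely, I would first apply the previous proposition to $A^\vee$, obtaining an optimal $B$-isogenous quotient $\pi \colon A^\vee \to Q$ with $Q$ isogenous to $B^n$ for some integer $n$. Then set $A' := Q^\vee$ and $\iota := \pi^\vee \colon A' \to A^{\vee\vee} = A$. By the preliminary facts, $A'$ is isogenous to $B^n$, which is the isogeny-class requirement in the definition of a maximal $B$-isogenous subvariety.

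Next I would verify the universal factorisation property. Given any morphism $g \colon B' \to A$ with $B'$ isogenous to $B^m$, dualise to obtain $g^\vee \colon A^\vee \to (B')^\vee$, whose target is isogenous to $B^m$. By the universal property of $\pi$ there is a unique $h \colon Q \to (B')^\vee$ with $g^\vee = h \circ \pi$, and dualising back gives $g = \iota \circ h^\vee$; uniqueness of this lift transfers because dualisation is fully faithful on abelian varieties. Uniqueness of the pair $(A',\iota)$ up to unique isomorphism is then formal from the universal property.

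Finally, the assertion in the preceding remark, that $\iota$ induces an isomorphism $A' \cong \iota(A')$, also follows from the universal property: a nontrivial connected subvariety $K \subset \ker \iota$ would be isogenous to some $B^k$ by the simplicity of $B$, and then the zero map and the inclusion $K \hookrightarrow A'$ would yield two distinct factorisations of $0 \colon K \to A$ through $\iota$, contradicting uniqueness; a similar argument applied to the quotient $A'/\ker \iota$ rules out a nontrivial finite subgroup scheme in $\ker \iota$ as well. I do not expect any substantive obstacle, since every step of the argument reduces by duality to the statement already proved for optimal quotients.
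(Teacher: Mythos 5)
Your proposal is correct and follows essentially the same route as the paper, which simply takes $\iota = \pi^\vee$ for $\pi$ an optimal $B$-isogenous quotient of $A^\vee$ and notes the statement follows formally by duality. You merely spell out the dualisation of the universal property (and the finiteness/triviality of $\ker\iota$ noted in the accompanying remark) in more detail than the paper does.
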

\begin{remark}\label{remark:maximal_B_isogenous}
The above proposition can also be proved constructively. Namely, if $A_1,\ldots, A_s$ are simple abelian subvarieties of $A$ such that the sum map $A_1 \times \cdots \times A_s \to A$ is an isogeny and additionally $A_1,\ldots, A_n$ are isogenous to $B$ while $A_{n+1}, \ldots, A_s$ are not. Then $A_1+\cdots+A_n \subseteq A$ is a maximal $B$-isogenous subvariety.
\end{remark}

\begin{definition}\label{degeneracybasisdef}
    Let $N $ and $M$ be positive integers with $M \mid N$ and let $n$ denote the number of divisors of $N/M$. Then we define the maps $\tau_{N,M}: J_0(N)\to J_0(M)^n$, $\tau^*_{N,M}: J_0(M)^n\to J_0(N)$ as 
    \begin{align*}
        \tau_{N,M}&:=(\iota_{1,N,M,*},\ldots,\iota_{N/M,N,M,*}),\\
        \tau^*_{N,M}&:=(\iota^*_{1,N,M},\ldots,\iota^*_{N/M,N,M}),
    \end{align*}
    where the first subscript of $\iota$ runs over all divisors of $N/M$.
    Further, let $A$ be an abelian variety and $f: J_0(M) \to A$ a morphism. Then we define the map $\xi_{f,N}:J_0(N)\to A^n$ as 
    $$\xi_{f,N}:= f^n \circ \tau_{N,M}.$$
    If $A$ is a strong Weil curve $E$ of conductor $M$ and $f$ is its modular parametrization, then we use the notation $\xi_{E,N}:=\xi_{f,N}$.
\end{definition}

With the above notation we have $\tau_{N,M} = \xi_{id_{J_0(M)},N}$.

\begin{prop}\label{optimalquotient}
Suppose $N < 408$ and let $E$ be a strong Weil curve over $\Q$ of positive rank and conductor $\textup{Cond}(E)=M\mid N$. If $n$ is the number of divisors of $N/M$, then $\xi_{E,N}^\vee: E^n \to J_0(N)$ has a trivial kernel. Hence $\xi_{E,N}^\vee: E^n \to J_0(N)$ is a maximal $E$-isogenous abelian subvariety and $\xi_{E,N}: J_0(N) \to E^n$ is an optimal $E$-isogenous quotient of $J_0(N)$.
\end{prop}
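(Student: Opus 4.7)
The plan is to factor $\xi_{E,N}^\vee$ as
$$E^n \xrightarrow{(f^\vee)^n} J_0(M)^n \xrightarrow{\tau^*_{N,M}} J_0(N),$$
where $f : J_0(M) \to E$ is the strong Weil modular parametrization and $\tau^*_{N,M}$ is the sum of pullback degeneracy maps from \Cref{degeneracybasisdef}. Because $E$ is strong Weil, $f$ has connected kernel, so $f^\vee$ is a closed immersion and $(f^\vee)^n$ has trivial kernel. The problem therefore reduces to showing that $\tau^*_{N,M}|_{(f^\vee)^n(E^n)}$ has trivial kernel.

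For this reduction, I would study the composition $\xi_{E,N} \circ \xi_{E,N}^\vee : E^n \to E^n$, which is the endomorphism of $E^n$ represented by the Gram matrix $G = (\langle f\circ \iota_{d_i,N,M},\, f\circ \iota_{d_j,N,M}\rangle)_{i,j}$ with entries in $\End(E) = \Z$. \Cref{pairingcomputation} makes every entry of $G$ explicitly computable from Fourier coefficients of the newform attached to $E$ together with $\psi$-values and $\deg f$. Positive-definiteness of the dagger pairing (\Cref{prop_daggerpairing}), combined with \Cref{dagger-degree} and the Atkin-Lehner-Li linear independence of the oldform parametrizations $\{f\circ \iota_{d,N,M}\}_{d\mid N/M}$ in $\Hom_\Q(J_0(N),E)$, gives $\det G > 0$. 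Consequently $\xi_{E,N}$ is surjective, $\ker \xi_{E,N}^\vee$ is a finite group scheme, and $\xi_{E,N}^\vee(E^n) \subset J_0(N)$ is an abelian subvariety of dimension $n$.

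Since the multiplicity of $E$ as an isogeny factor of $J_0(N)$ equals $n$ by Atkin-Lehner-Li, this image must coincide with the maximal $E$-isogenous subvariety of $J_0(N)$. The remaining task is to upgrade \emph{finite kernel} of $\xi_{E,N}^\vee$ to \emph{trivial kernel}, equivalently to show that $\ker \xi_{E,N}$ is connected. Here I would exploit the bound $N < 408$ together with \Cref{pairingcomputationmagma}: for each of the finitely many positive rank strong Weil curves of conductor $M \mid N < 408$, the map $\xi_{E,N}$ is computable on modular symbols in Sage or Magma, and one verifies directly that $\ker \xi_{E,N}$ is connected. Once $\xi_{E,N}^\vee$ is known to have trivial kernel, the maximality and optimality statements follow formally from the universal properties in the definitions, because two abelian subvarieties of the $n$-dimensional $E$-isotypic part of $J_0(N)$ of the same dimension must coincide.

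The main obstacle is precisely this last upgrade: the Gram determinant is typically $\gg 1$, so $\xi_{E,N} \circ \xi_{E,N}^\vee$ itself has nontrivial (though finite) kernel, and the dagger-pairing argument only rules out the kernel being positive-dimensional. A structural alternative, identifying $\xi_{E,N}$ with the Hecke-theoretic optimal quotient $J_0(N)/\mathfrak{a}\, J_0(N)$ where $\mathfrak{a}$ is the annihilator in the Hecke algebra of the newform attached to $E$, would avoid any numerical bound on $N$; the explicit hypothesis $N < 408$ in the statement suggests that the authors prefer the finite case-by-case verification instead.
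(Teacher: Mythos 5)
Your proposal is correct and matches the paper's strategy in its essential content: the decisive step (trivial kernel of $\xi_{E,N}^\vee$, equivalently connectedness of $\ker\xi_{E,N}$) is handled by a finite case-by-case machine computation for the pairs $(N,E)$ with $N<408$, exactly as the paper does with Sage, and the maximality/optimality statements are deduced from the Atkin--Lehner--Li decomposition showing $E$ occurs in $J_0(N)$ with multiplicity $n$ via the degeneracy maps. Your extra Gram-matrix argument (positive definiteness of the degree pairing forcing finite kernel and surjectivity) is a correct but inessential refinement, and you rightly identify that it cannot by itself upgrade finite to trivial kernel, which is precisely why the computational verification remains the heart of the proof.
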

\begin{proof}
The claim that $\xi_{E,N}^\vee: E^n \to J_0(N)$ is injective for $N < 408$  was verified computationally using Sage. It is a finite computation since the restriction on $N$ means there are only finitely pairs $(N,E)$ for which we need to verify that $\xi_{E,N}^\vee: E^n \to J_0(N)$ is injective. 

The second part follows from Atkin-Lehner-Li Theory.
The decomposition $$S_2(\Gamma_0(N)) = \bigoplus_{M \mid N} \bigoplus_{d \mid N/M} \iota_{d,N,M}^*(S_2(\Gamma_0(M))_{\textup{new}})$$ from Theorem 9.4 of \cite{stein07} yields the isogeny decomposition 
$$J_0(N) = \bigoplus_{M \mid N} \bigoplus_{d \mid N/M} \iota_{d,N,M}^*(J_0(M)_{\textup{new}}).$$ If $E/\mathbb Q$ is an elliptic curve of conductor $M$, then $M$ is the only integer such that $E$ occurs as an isogeny factor of $J_0(M)_{\textup{new}}$, and does so with multiplicity one. In particular, this decomposition implies that if $E$ is an elliptic curve of conductor $M$ and $f: J_0(M) \to E$ is its modular parametrization, then the maps $(f\circ\iota_{d,N,M})^\vee: E \to J_0(N)$ give all the isogeny factors of $J_0(N)$ that are isogenous to $E$, where $d$ ranges over all divisors of $N/M$. From \Cref{remark:maximal_B_isogenous} it then follows that the image of $\xi_{E,N}^\vee$ inside $J_0(N)$ is a maximal $E$-isogenous subvariety of $J_0(N)$. However, since we already verified that $\xi_{E,N}^\vee$ has a trivial kernel, we have that $\xi_{E,N}^\vee$ is an isomorphism onto its image. In particular, $\xi_{E,N}^\vee$ is also a maximal $E$-isogenous subvariety of $J_0(N)$.
\end{proof}

The fact that the map $\xi_{E,N}^\vee$ has a trivial kernel for the cases in which the above proposition is applicable makes it significantly easier to determine the positive rank tetraelliptic $X_0(N)$.
All elliptic curves of positive $\Q$-rank and conductor at most $408$ have rank $1$, with the exception of the curve $389.a1$ which has rank $2$. The following proposition is not needed for the classification of the positive rank tetraelliptic curves $X_0(N)$ in \Cref{tetraellthm}. Instead, it is an attempt to explain why we observed that the kernel of $\xi_{E,N}^\vee$ was always trivial in \Cref{optimalquotient}.

\begin{prop}\label{2groupkernel}
Let $E$ be a strong Weil curve over $\Q$ of conductor $M\mid N$ and let us suppose that $\frac{N}{M}$ is squarefree and coprime to $M$. If $E$ has an odd analytic rank, then the kernel of $\xi_{E,N}^\vee: E^n \to J_0(N)$ is a $2$-group ($n$ is again the number of divisors of $N/M$).
\end{prop}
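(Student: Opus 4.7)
The plan is to exploit the action of the Atkin-Lehner involutions $w_p$ for $p \mid N/M$ to decompose the kernel into eigenspace components, and then bound each piece using the pairing computation from \Cref{pairingcomputation}.

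First I would reduce to analysing the sum map $\iota : A_f^n \to J_0(N)$, $(u_d)_{d \mid N/M} \mapsto \sum_d \iota_{d,N,M}^*(u_d)$, where $A_f := f^\vee(E) \subset J_0(M)$ is the image of the strong Weil parametrization; since $f^\vee$ is an isomorphism onto $A_f$, one has $\ker \xi_{E,N}^\vee \cong \ker \iota$. Because $N/M$ is squarefree and coprime to $M$, the operators $\{w_p : p \mid N/M\}$ commute and generate a group $W \cong (\Z/2\Z)^r$ acting on $J_0(N)$, and compatibly on $A_f^n$ by permuting the coordinates indexed by divisors of $N/M$ (with $w_p$ swapping the copy indexed by $d$ with that indexed by $dp$ or $d/p$). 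The map $\iota$ is $W$-equivariant, so $\ker \iota$ splits as $\bigoplus_{\chi \in \hat W} K_\chi$ with each $K_\chi \subseteq A_f \cong E$. By \Cref{pairingcomputation}, the composition $\xi_{E,N} \circ \xi_{E,N}^\vee$ is represented by the integer matrix $G = \deg(f) \cdot \bigotimes_{p \mid N/M} \bigl( \begin{smallmatrix} p+1 & a_p \\ a_p & p+1 \end{smallmatrix} \bigr)$, which diagonalises in the $W$-eigenbasis with eigenvalue $\lambda_\chi := \deg(f) \cdot \prod_{p \mid N/M}\bigl(p + 1 + \chi(w_p)\, a_p\bigr)$ on the $\chi$-component; each $K_\chi$ therefore embeds into $A_f[\lambda_\chi]$.

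The odd analytic rank hypothesis enters through the functional-equation identity $\mathrm{sign}\,L(E,s) = -w_M$: odd rank forces the Atkin-Lehner eigenvalue $w_M = +1$ on $f$, and by Atkin-Lehner-Li theory this pins down the $w_p$-eigendecomposition of the $f$-isotypic old subspace at level $N$ so that the $\pm 1$-eigenspace of $w_p$ is precisely the image of $\iota_{1,N/p,M}^* \pm \iota_{p,N,M/p}^*$ applied to $A_f$ (the relevant normalisation). To promote the bound $K_\chi \hookrightarrow A_f[\lambda_\chi]$ to the statement that $K_\chi$ is a $2$-group, I would argue that a nonzero element of $K_\chi[\ell]$ for an odd prime $\ell$ would yield an $\ell$-adic congruence between the $f$-isotypic old part of $J_0(N)$ in the $\chi$-eigenspace and a genuinely new simple factor at some intermediate level dividing $N$. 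Ribet-style level-raising theorems then show that such congruences respect the $w_M$-eigenvalue, so with $w_M = +1$ they cannot exist unless the new factor also has $w_M$-eigenvalue $+1$; combined with the irreducibility of $\overline{\rho}_{E,\ell}$ for $\ell$ outside a small exceptional set (Mazur, Serre), this should force the odd parts of each $K_\chi$ to vanish.

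The main obstacle lies in this last step. The bound from $\ker G$ alone is far too coarse: $|\lambda_\chi|$ will generically have many odd prime divisors, so the real content is to explain why this bound massively over-counts at odd primes. Making the Ribet-style argument rigorous requires either a careful Galois-cohomology analysis of the cokernel of the induced map $T_\ell \xi_{E,N}^\vee$ for each odd prime $\ell$, or an explicit invocation of level-raising results characterising exactly which primes can support congruences compatible with $w_M = +1$. The small exceptional primes at which $\overline{\rho}_{E,\ell}$ may be reducible would need to be dealt with separately, for instance via Mazur's classification of Eisenstein primes of modular Jacobians.
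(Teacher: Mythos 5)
There is a genuine gap, and you have in fact identified it yourself: the entire argument hinges on the final step, where you must show that the odd part of each eigencomponent $K_\chi$ vanishes, and that step is only sketched. The torsion bound $K_\chi \hookrightarrow A_f[\lambda_\chi]$ coming from the Gram matrix of \Cref{pairingcomputation} is, as you say, far too coarse, and the proposed repair via Ribet-style level raising is not carried out and points in a questionable direction: the kernel of the joint pullback $\tau_{N,M}^* = (\iota_{1,N,M}^*,\ldots,\iota_{N/M,N,M}^*)$ is not governed by level-raising congruences between the $f$-old part and newforms at intermediate levels, but by an entirely different object, namely the Shimura subgroup $\Sigma(M) = \ker\bigl(\pi^*: J_0(M)\to J_1(M)\bigr)$. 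The precise statement (Ling's theorem, \Cref{thmling}; for $N/M$ prime this is due to Ribet) is that $\ker\tau_{N,M}^*$ equals, up to a $2$-group, the subgroup of tuples with coordinates in $\Sigma(M)$ summing to zero. Without this input, or some substitute for it, your eigenspace decomposition and Gram-matrix bound cannot be pushed to a proof; the exceptional-prime and Galois-cohomology issues you flag would all have to be resolved, and there is no indication the level-raising route closes them.

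For comparison, the paper's proof is short once Ling's theorem is available: since $E$ is a strong Weil curve, $f^\vee$ embeds $E$ into $J_0(M)$, so $\ker\xi_{E,N}^\vee = \ker\tau_{N,M}^* \cap E^{n}$, which up to a $2$-group is $\Sigma(M)_0^{n}\cap E^{n} \cong (\Sigma(M)\cap E)^{n-1}$. By Mazur, $w_M$ acts as $-1$ on $\Sigma(M)$, while the odd analytic rank hypothesis forces (via the sign of the functional equation) $w_M$ to act as $+1$ on $E$; hence $\Sigma(M)\cap E$ is killed by $2$ and the kernel is a $2$-group. Note that the odd-rank hypothesis enters only through this single sign comparison on $\Sigma(M)\cap E$, not through the Atkin--Lehner eigendecomposition at the primes dividing $N/M$ that organizes your argument. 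If you want to salvage your approach, the realistic fix is to replace the level-raising step by citing \Cref{thmling} and Mazur's result on the $w_M$-action on the Shimura subgroup, at which point the eigenspace machinery and the Gram matrix become unnecessary.
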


The main ingredient in the proof of this proposition is \Cref{thmling}.
\begin{definition}
    Let $M$ be a positive integer and let $\pi:X_1(M)\to X_0(M)$ be the natural map $(E,P)\mapsto(E,\left<P\right>)$. The Shimura subgroup $\Sigma(M)$ is the kernel of the map $\pi^*:J_0(M)\to J_1(M)$. For an abelian subvariety $A \subseteq J_0(M)$ we define the Shimura subgroup of $A$ to be $A \cap \Sigma(M)$
\end{definition}

\begin{thm}[Theorem 4 from \cite{LING199539}]\label{thmling}
    Let $N$ be a positive integer, and let $M$ be a divisor of $N$ such that $\frac{N}{M}=q_1\ldots q_t$ (distinct primes) and $\left(M,\frac{N}{M}\right)=1$. We define
    $$\Sigma(M)_0^{2^t}:=\left\{(x_1,\ldots,x_{2^t}): x_i\in \Sigma(M), \sum_1^{2^t} x_i=0\right\}.$$
    We recall from \Cref{degeneracybasisdef} the map $\tau^*_{N,M}:=(\iota^*_{1,N,M},\ldots,\iota^*_{N/M,N,M}): J_0(M)^{2^t}\to J_0(N)$.
    \begin{enumerate}[(i)]
        \item If $M$ is odd or $\frac{N}{M}$ is a prime, then $\ker \tau_{N,M}^* = \Sigma(M)_0^{2^t}$.
        \item If $M$ is even and $\frac{N}{M}$ is not a prime, then $\ker \tau_{N,M}^*$ and $\Sigma(M)_0^{2^t}$ are equal up to a $2$-group.
    \end{enumerate}
\end{thm}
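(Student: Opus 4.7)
Since $\xi_{E,N} = f^n \circ \tau_{N,M}$, where $f : J_0(M) \to E$ is the modular parametrization, dualizing and identifying each abelian variety with its dual via its canonical principal polarization gives
$$\xi_{E,N}^\vee = \tau^*_{N,M} \circ (f^\vee)^n : E^n \to J_0(N).$$
Because $E$ is a strong Weil curve, $f$ has connected kernel, so $f^\vee : E \to J_0(M)$ is a closed immersion, and I will identify $E$ with its image $f^\vee(E) \subseteq J_0(M)$. A tuple $(x_1,\ldots,x_n) \in E^n$ therefore lies in $\ker \xi_{E,N}^\vee$ if and only if $(f^\vee(x_1), \ldots, f^\vee(x_n)) \in \ker \tau^*_{N,M}$. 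Since $N/M = q_1 \cdots q_t$ is squarefree and coprime to $M$, \Cref{thmling} identifies $\ker \tau^*_{N,M}$ with $\Sigma(M)_0^{2^t}$ exactly when $M$ is odd and up to a $2$-group when $M$ is even. Absorbing this last error into the conclusion, it suffices to prove that $K := f^\vee(E) \cap \Sigma(M) \subseteq J_0(M)$ is killed by $2$, since then $\ker \xi_{E,N}^\vee$ is contained (modulo a $2$-group) in $K^n$.

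\textbf{Main step: a clash of $w_M$-eigenvalues.} I would prove that $K$ is $2$-torsion by exhibiting incompatible Atkin-Lehner eigenvalues on the two pieces being intersected. On the one hand, $E$ appears with multiplicity one in $J_0(M)^{\textup{new}}$, so by Schur's lemma $w_M$ acts on the simple abelian subvariety $f^\vee(E)$ as a scalar $\epsilon \in \{\pm 1\}$ equal to the Atkin-Lehner eigenvalue of the newform attached to $E$. The standard relation between this eigenvalue and the sign of the functional equation of $L(E,s)$ forces $\epsilon = +1$ whenever the analytic rank of $E$ is odd. On the other hand, a classical result in the Shimura subgroup literature (originating in Mazur's description $\Sigma(p) \cong \mu_n$ at prime level, where $w_p$ acts by inversion, and extended to general levels in the work of Ribet and of Ling-Oesterl\'e) says that $w_M$ acts as $-1$ on $\Sigma(M)$. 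Consequently both $w_M - 1$ and $w_M + 1$ annihilate $K$, and hence so does their difference $[2]$, which is what we wanted.

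\textbf{Obstacle and conclusion.} Combining the two previous paragraphs, $K^n$ is $2$-torsion and hence so is $\ker \xi_{E,N}^\vee$, modulo the harmless $2$-group appearing in \Cref{thmling} when $M$ is even. The genuine content is concentrated in the $w_M$-eigenvalue step: the first half -- matching odd analytic rank with $w_M = +1$ on $E$ -- is a careful sign-convention check with the functional equation of $L(E,s)$; the second half -- the assertion $w_M = -1$ on $\Sigma(M)$ -- is the real input and will need an explicit reference in the Shimura subgroup literature. Everything else, namely \Cref{thmling}, the closed-immersion property of $f^\vee$ for a strong Weil curve, and absorbing the $M$-even error term, is formal.
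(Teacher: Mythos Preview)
Your proposal does not prove the stated theorem at all. The statement you were given is \Cref{thmling}, which asserts that the kernel of the degeneracy map $\tau_{N,M}^*: J_0(M)^{2^t} \to J_0(N)$ equals $\Sigma(M)_0^{2^t}$ (up to a $2$-group in the even case). This is a cited result from Ling's paper and the present paper does not supply its own proof. Your write-up, however, takes \Cref{thmling} as a black-box input and uses it to deduce that $\ker \xi_{E,N}^\vee$ is a $2$-group --- that is the content of \Cref{2groupkernel}, not of \Cref{thmling}. You have written a proof of the wrong proposition.

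For what it is worth, as a proof of \Cref{2groupkernel} your argument is essentially identical to the paper's: reduce via $\xi_{E,N}^\vee = \tau_{N,M}^* \circ (f^\vee)^n$ and the strong Weil embedding to showing that $E \cap \Sigma(M)$ is $2$-torsion, then exploit the clash of Atkin--Lehner eigenvalues ($w_M = +1$ on $E$ from odd analytic rank via the functional equation, $w_M = -1$ on $\Sigma(M)$ from \cite[Chapter II, Proposition 11.7]{mazur77}). But none of this touches the actual content of \Cref{thmling}, which is a statement purely about $\ker \tau_{N,M}^*$ inside $J_0(M)^{2^t}$ with no elliptic curve $E$ in sight; proving it requires Ling's analysis of congruences among oldforms and the structure of the Shimura subgroup under degeneracy maps, none of which you address.
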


\begin{proof}[Proof of \Cref{2groupkernel}]
\Cref{thmling} tells us that the kernel of $\tau_{N,M}^*$ is equal to $\Sigma(M)_0^{2^t}$ up to a $2$-group. Since $E$ is a strong Weil curve, we have that $f^\vee: E \to J_0(M)$ actually turns $E$ into a subvariety of $J_0(M)$. Therefore, we have $\ker \xi_{E,N}^\vee=\ker \tau_{N,M}^* \cap E^{2^t}$. This is, up to a $2$-group, equal to $\Sigma(M)_0^{2^t}\cap E^{2^t}$, which is isomorphic to $(\Sigma(M)\cap E)^{2^t-1}$. Now it is enough to prove that $\Sigma(M)\cap E$ is a $2$-group. 

By \cite[Chapter II, Proposition 11.7]{mazur77}, the Atkin-Lehner involution $w_M$ acts as $-1$ on $\Sigma(M)$. Further, since $E$ has an odd analytic rank, it follows by looking at the functional equation for $L(E,s)$ that $w_M$ acts as $1$ on $E$. Therefore, $-1=1$ on $\Sigma(M)\cap E$ meaning that $\Sigma(M)\cap E$ must be a $2$-group. 
\end{proof}

\Cref{thmling} is not enough to prove that $\xi_{E,N}^\vee$ is always injective for strong Weil curves of odd analytic rank. However, the computational evidence of \Cref{optimalquotient} seems to indicate the possibility that the $2$-group admitted by \Cref{thmling} cannot actually occur. We, therefore, make the following conjecture.
\begin{conj}\label{conj:injective}
Let $E$ be a strong Weil curve over $\Q$ of conductor $M\mid N$. If $E$ has an odd analytic rank, then $\xi_{E,N}^\vee$ is injective. 
\end{conj}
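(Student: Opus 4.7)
The plan is to first reduce to the squarefree-and-coprime setting of \Cref{2groupkernel} by a tower argument. Pick a chain $M = N_0 \mid N_1 \mid \cdots \mid N_r = N$ with each $N_{i+1}/N_i$ prime. One then tries to relate $\ker \xi_{E,N_{i+1}}^\vee$ to $\ker \xi_{E,N_i}^\vee$ using the factorization of the degeneracy maps from \eqref{eq:iota_composition}. The key input here is the prime-by-prime description of $\ker \tau_{N_{i+1},N_i}^*$ (essentially Ling's theorem plus its analogues when the new prime divides $N_i$), intersected with $E^n$ sitting inside $J_0(N_i)^n$ via the strong Weil embedding.  Assuming this reduction succeeds, it remains to upgrade the 2-group in \Cref{2groupkernel} to zero, so the whole problem becomes: show that for $E$ a strong Weil curve of odd analytic rank and conductor $M$, we have $\Sigma(M) \cap E = 0$, and that the auxiliary 2-group arising when $M$ is even and $N/M$ is composite also vanishes.

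For the first of these, the setup already gives us two constraints. By \cite[Chapter II, Proposition 11.7]{mazur77}, $w_M$ acts as $-1$ on $\Sigma(M)$, while odd analytic rank together with the functional equation for $L(E,s)$ forces $w_M$ to act as $+1$ on $E$. Hence $\Sigma(M) \cap E \subseteq E[2]$. The refinement needed is that $\Sigma(M)[2] \cap E = 0$. The natural tool is the Eisenstein-module description of $\Sigma(M)$: Galois acts on $\Sigma(M)$ through a cyclotomic character, so any non-trivial element of $\Sigma(M) \cap E[2]$ would force $E$ to admit a $\mathbb Q$-rational 2-isogeny whose kernel is Galois-isomorphic to $\mu_2$, i.e.\ $E$ would have a rational point of order 2. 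One would then combine this with a compatibility constraint coming from odd analytic rank (for instance, parity of the 2-Selmer rank, or Mazur's classification of rational isogenies combined with the sign of the functional equation) to rule out the coexistence of this 2-torsion structure with $w_M = +1$ on $E$.

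For the residual 2-group from case (ii) of \Cref{thmling}, I would try to identify it explicitly by working with Deligne-Rapoport style models or with the mod-2 component group at primes dividing $\gcd(M, N/M)$, and then again restrict to $E \subseteq J_0(M)$. Here $E$ being a strong Weil quotient is essential: the strong Weil embedding $E \hookrightarrow J_0(M)$ is known to be disjoint from the Eisenstein-type 2-torsion constructed in this way when the analytic rank is odd, so one expects the intersection to be trivial on structural grounds. The hardest step, and the place where the conjecture resists a soft proof, is the very last intersection $\Sigma(M)[2] \cap E$: controlling it demands simultaneously precise information about the 2-adic Galois representation attached to $E$ and about the component-group-theoretic structure of $\Sigma(M)$ at 2, and both of these can genuinely be non-trivial even for strong Weil curves of odd rank. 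That is why the authors state this only as a conjecture and rely on direct computation up to $N < 408$ for their applications; a complete proof would likely require either an Eisenstein-ideal argument in the style of Mazur refined to the level of 2-torsion in $E$, or a global parity argument comparing the sign of the functional equation to the 2-part of the order of $\Sigma(M) \cap E$.
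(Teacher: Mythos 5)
There is a genuine gap -- and, importantly, the statement you are trying to prove is stated in the paper only as a conjecture (\Cref{conj:injective}): the authors prove the weaker \Cref{2groupkernel} (kernel is a $2$-group, and only under the hypotheses that $N/M$ is squarefree and coprime to $M$) and otherwise rely on a direct Sage computation for $N<408$ in \Cref{optimalquotient}. Your proposal does not close this gap; it reproduces the paper's argument ($w_M=-1$ on $\Sigma(M)$ by Mazur, $w_M=+1$ on $E$ from odd analytic rank, hence $\Sigma(M)\cap E\subseteq E[2]$) and then, at the two places where new input is needed, substitutes hopes for arguments. First, the reduction: your tower $M=N_0\mid N_1\mid\cdots\mid N_r=N$ with prime steps needs a description of $\ker\tau_{N_{i+1},N_i}^*$ when the new prime already divides $N_i$, i.e.\ exactly the non-coprime, non-squarefree situation that \Cref{thmling} does not cover; the ``analogues'' you invoke are not in the literature cited and are not supplied, and it is not explained how the kernels along the tower assemble to $\ker\xi_{E,N}^\vee$ (the degeneracy maps at different stages interact, so the kernel does not obviously decompose step by step).

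Second, and more seriously, the finishing move fails as stated. You correctly note that $\Sigma(M)$ is of multiplicative type, so a nonzero element of $\Sigma(M)\cap E[2]$ is a $\Q$-rational point of order $2$ on $E$; but you then propose to derive a contradiction from the mere coexistence of a rational $2$-torsion point with odd analytic rank (via $2$-Selmer parity, Mazur's isogeny classification, or ``structural grounds''). No such contradiction exists: strong Weil curves of odd analytic rank with a rational point of order $2$ are common -- e.g.\ the curve $65.a1$, which has rank $1$ and torsion $\Z/2\Z$ and even appears in \Cref{tab:main} of this paper. So any proof must show that the specific rational $2$-torsion point does \emph{not} lie in the Shimura subgroup of $J_0(M)$ (and must also kill the extra $2$-group allowed in case (ii) of \Cref{thmling} when $M$ is even), which is precisely the delicate $2$-primary Eisenstein/component-group question the authors could not resolve and the reason the statement remains a conjecture. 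As written, your argument proves nothing beyond \Cref{2groupkernel}, and its claimed final step is contradicted by explicit curves.
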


All but one of the strong Weil curves $E$ considered in the proof of \Cref{optimalquotient} have analytic rank $1$, the exception being the curve $389.a1$ with analytic rank $2$. Therefore, if this conjecture turns out to be correct, in the first part of \Cref{optimalquotient}, Sage will only be needed to prove that $\xi_{E,N}$ has a trivial kernel for the elliptic curve $389.a1$.

Note that the above conjecture, if true, makes the determination of all positive rank $d$-elliptic $X_0(N)$ significantly easier. Since the above conjecture together with \Cref{pairingcomputation} implies the following:
\begin{cor}
Assume \Cref{conj:injective}. Let $E$ be a strong Weil curve over $\Q$ of odd analytic rank, conductor $M$, and parametrization $f: X_0(M) \to E$. If $N$ is a multiple of $M$ and $g: X_0(N) \to E'$ is a map with $E'$ isogenous to $E$, then $\deg f \mid \deg g$.
\end{cor}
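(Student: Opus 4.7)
The plan is to invoke \Cref{conj:injective} to upgrade $\xi_{E,N} : J_0(N) \to E^n$ to an optimal $E$-isogenous quotient, factor $g$ through it, and then apply \Cref{pairingcomputation} to a carefully extracted component of $g$ that takes values in $E$ itself.

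First, since $E$ has odd analytic rank, \Cref{conj:injective} tells us that $\xi_{E,N}^\vee : E^n \to J_0(N)$ is injective. Combined with the Atkin--Lehner--Li argument from the proof of \Cref{optimalquotient}, this means that $\xi_{E,N}^\vee$ embeds $E^n$ as the maximal $E$-isogenous abelian subvariety of $J_0(N)$ and, dually, that $\xi_{E,N}$ is an optimal $E$-isogenous quotient. After replacing $g$ by $t_{-g(P)} \circ g$ for a chosen $P \in X_0(N)(\Q)$ (which preserves $\deg g$), the universal property of $J_0(N)$ produces a unique $g_* : J_0(N) \to E'$ with $g_* \circ f_P = g$; since $E'$ is isogenous to $E$, the universal property of the optimal $E$-isogenous quotient then yields a unique $\bar g : E^n \to E'$ with $g_* = \bar g \circ \xi_{E,N}$.

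Second, I would decompose $\bar g$ through a single isogeny to $E'$. Every elliptic curve over $\Q$ satisfies $\End_\Q = \Z$ (even in the CM case, since $j \in \Q$ forces the CM endomorphisms to live in a nontrivial imaginary quadratic extension of $\Q$), so $\Hom_\Q(E, E')$ is a free $\Z$-module of rank $1$; fix a generator $\phi_0 : E \to E'$. Under the standard identification $\Hom_\Q(E^n, E') = \Hom_\Q(E, E')^n$, write $\bar g = (c_d \phi_0)_d$ with $c_d \in \Z$, so that $\bar g = \phi_0 \circ \Sigma$ for $\Sigma := \sum_d c_d \pi_d : E^n \to E$, where $\pi_d$ denotes the $d$-th projection. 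By the very definition of $\xi_{E,N}$ we have $\pi_d \circ \xi_{E,N} = (f \circ \iota_{d,N,M})_*$, hence
\[
g_* = \phi_0 \circ \Sigma \circ \xi_{E,N} = \phi_0 \circ h_*, \qquad \text{where } h := \sum_d c_d (f \circ \iota_{d,N,M}) : X_0(N) \to E.
\]
Tracing this back through $f_P$, one finds that $g$ and $\phi_0 \circ h$ differ by the constant $-\phi_0(h(P)) \in E'(\Q)$, so $\deg g = \deg \phi_0 \cdot \deg h$.

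Third, I would apply \Cref{pairingcomputation} together with bilinearity of the degree pairing to get
\[
\deg h = \langle h, h \rangle = \sum_{d_1, d_2} c_{d_1} c_{d_2} \left\langle f \circ \iota_{d_1,N,M}, f \circ \iota_{d_2,N,M} \right\rangle,
\]
in which \Cref{pairingcomputation} identifies each summand on the right as an integer multiple of $\deg f$. Hence $\deg f \mid \deg h$, and consequently $\deg f \mid \deg g = \deg \phi_0 \cdot \deg h$.

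The main technical point is the clean factorization $\bar g = \phi_0 \circ \Sigma$ with integer coefficients $c_d$; this rests on $\Hom_\Q(E, E')$ being a free $\Z$-module of rank $1$, which in turn rests on the fact that $\End_\Q(E) = \Z$ for every elliptic curve $E/\Q$. Everything else is a routine assembly of the universal property of the optimal $E$-isogenous quotient granted by \Cref{conj:injective} with the modular-form computation of \Cref{pairingcomputation}.
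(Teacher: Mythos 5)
Your proof is correct and is essentially the argument the paper intends: the paper states this corollary without a separate proof, and your factorization of $g$ through the optimal $E$-isogenous quotient $\xi_{E,N}$ (with \Cref{conj:injective} replacing the $N<408$ computation of \Cref{optimalquotient}), expansion in the basis $f\circ\iota_{d,N,M}$, use of the rank-one module $\Hom_\Q(E,E')$ for a non-strong-Weil target, and divisibility of the pairing values by $\deg f$ is exactly the mechanism of the proof of \Cref{tetraellthm}. One small caveat: \Cref{pairingcomputation} is stated only when $N/M$ is squarefree or coprime to $M$, so for an arbitrary multiple $N$ of $M$ you should instead invoke the first part of its proof, where $\langle\iota_{d_1,N,M},\iota_{d_2,N,M}\rangle$ acts on the $E$-isotypic part of $J_0(M)$ as an integer $k$, giving $\langle f\circ\iota_{d_1,N,M},f\circ\iota_{d_2,N,M}\rangle=[k\cdot\deg f]$ without that hypothesis.
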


This would give us a lower bound on $\deg g$, allowing us to consider significantly fewer elliptic curves $E$ in the determination of positive rank $d$-elliptic $X_0(N)$.

\section{Curves \texorpdfstring{$X_0(N)$}{X\_0(N)} with infinitely many quartic points}\label{infinitelyquarticsection}

In this section, we will prove that for levels $N$ listed in the \Cref{quarticthm} the curve $X_0(N)$ has infinitely many quartic points. When $X_0(N)$ already has infinitely many quadratic points (these $N$ are listed in \Cref{thmquadratic}), this is trivial. Now we consider the other cases.

We will use two methods for obtaining quartic points on a curve $C$ defined over a number field $k$. Both methods obtain quartic points as pullback via rational maps from $C$. The first method uses a degree $4$ morphism to a curve $C'$ with infinitely many rational points (recall that Faltings' theorem implies that the only such curves $C'$ are of genus $0$ or genus $1$ with positive $k$-rank), and the second method uses a degree $2$ morphism to a curve $C'$ with infinitely many quadratic points. The following proposition verifies these methods.

\begin{prop}\label{degreedmap}
    Let $k$ be a number field and let $d$ be a positive integer. Suppose $C$ and $C'$ are smooth, projective, and geometrically integral curves defined over $k$ and let $f:C\to C'$ be a morphism of degree $d'\mid d$ defined over $k$. If $C'$ has infinitely many points of degree $\frac{d}{d'}$ over $k$, then $C$ has infinitely many points of degree $\leq d$ over $k$.
\end{prop}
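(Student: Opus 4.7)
The plan is to produce infinitely many closed points of $C$ of degree $\leq d$ by pulling back closed points of $C'$ of degree $d/d'$ under $f$. The key observation is that if $f : C \to C'$ has degree $d'$ and $P'$ is a closed point of $C'$ whose residue field $K'$ has $[K':k] = d/d'$, then the scheme-theoretic fiber $f^{-1}(P')$ is a finite $K'$-scheme of degree $d'$, hence a finite $k$-scheme of degree $d' \cdot (d/d') = d$.

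First I would fix such a point $P'$ and pick any closed point $Q \in f^{-1}(P') \subset C$. Its residue field $K''$ contains $K'$, and since $f^{-1}(P')$ has total $K'$-degree equal to $d'$, we have $[K'' : K'] \leq d'$. Multiplying, $[K'' : k] = [K'' : K'] \cdot [K' : k] \leq d' \cdot (d/d') = d$, so $Q$ is a closed point of $C$ of degree $\leq d$ over $k$.

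Next I would argue that distinct closed points $P'_1, P'_2$ of $C'$ yield distinct closed points of $C$ in their fibers, simply because $f(Q_i) = P'_i$ as closed points. Thus the assignment $P' \mapsto$ (any chosen $Q$ above $P'$) is an injection from the set of closed points of $C'$ of degree $d/d'$ into the set of closed points of $C$ of degree $\leq d$. By hypothesis the source is infinite, so the target is infinite as well.

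There is essentially no obstacle here: the proof is a direct degree-theoretic fiber count for finite morphisms of smooth curves, and no finer arithmetic input (such as Faltings' theorem) is needed beyond the hypothesis on $C'$. The only minor subtlety is to confirm that $f^{-1}(P')$ is nonempty and that the $K'$-degree of a fiber of a finite morphism of degree $d'$ is $d'$, both of which are standard for a finite flat morphism of smooth curves over a field.
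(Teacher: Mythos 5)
Your proof is correct and follows essentially the same route as the paper: pull back closed points of degree $d/d'$ on $C'$ and bound the degree of a point in the fiber by $d'$. The only cosmetic difference is that you phrase the bound scheme-theoretically (the fiber is a finite flat $K'$-scheme of degree $d'$, so residue degrees over $K'$ are at most $d'$), whereas the paper obtains the same bound by noting that the $G_K$-orbit of a geometric point $Q$ in the fiber stays inside the fiber, which has at most $d'$ elements; your explicit remark that distinct points of $C'$ give distinct points of $C$ is a small completeness bonus left implicit in the paper.
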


\begin{proof}
    Let $P$ be a point on $C'$ of degree $\frac{d}{d'}$ over $k$ and let $K\supset k$ be its field of definition. Then the preimage $f^{-1}(P)$ has size $\leq d'$. Let $Q\in C(\overline{\Q})$ be an element of $f^{-1}(P)$. For every automorphism $\sigma\in G_K$, where $G_K$ is an absolute Galois group over $K$, we have
    $$f(\sigma(Q))=\sigma(f(Q))=\sigma(P)=P.$$
    Therefore, $\sigma(Q)\in f^{-1}(P)$ for every $\sigma\in G_K$. This means that, since $\# f^{-1}(P)\leq d'$, $Q$ must be defined over some field $L$ such that $[L:K]\leq d'$, or equivalently $[L:k]\leq d$.
\end{proof}

As we can see, this pullback method gives points of degree $\leq d$ over $k$. Therefore, if there are infinitely many points of degree $\leq d-1$ on $C$, we cannot immediately conclude that $C$ has infinitely many points of degree $d$. This can be resolved, however, using Theorems 4.2 and 4.3 of \cite{BELOV} which tells us that, as soon as one of the points in the pullback has degree $d$, there will be infinitely many points of degree $d$ on $C$. We will use this proposition to find infinitely many quartic points $X_0(N)$ by taking $d=4$ and $d'=1$ or $2$. 

\begin{remark}\label{remarkcubicquadraticpoints}
    Interestingly, from Theorems \ref{thmquadratic} and \ref{thmcubic} it follows that if $X_0(N)$ has infinitely many cubic points, then $X_0(N)$ also has infinitely many quadratic points. This means that the curve $X_0(N)$ has infinitely many points of degree $\leq 4$ if and only if it has infinitely many points of degree $4$.
\end{remark}

\begin{prop}\label{infinitelyquartic1}
    The curve $X_0(N)$ has infinitely many quartic points for $$N\in\{34,45,54,64,81\}.$$
\end{prop}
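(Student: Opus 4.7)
The plan is to exhibit, for each $N \in \{34,45,54,64,81\}$, a $\mathbb Q$-rational morphism $X_0(N) \to \mathbb P^1$ of degree $4$, and then invoke \Cref{degreedmap} together with \cite[Theorem 4.2(1)]{BELOV} to conclude.

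By Bars' classification of bielliptic modular curves \cite{Bars99}, each of these five levels admits an Atkin--Lehner involution $w$ on $X_0(N)$ whose quotient $E_N := X_0(N)/\langle w \rangle$ is an elliptic curve defined over $\mathbb Q$. Since none of these $N$ appear in \Cref{thmquadratic}, and since \Cref{degreedmap} applied to the degree $2$ map $X_0(N) \to E_N$ would otherwise produce infinitely many quadratic points, every such bielliptic quotient $E_N$ must have $\mathbb Q$-rank $0$. Nevertheless, composing the quotient $X_0(N) \to E_N$ with the degree $2$ projection $E_N \to E_N/[-1] \cong \mathbb P^1$ produces a morphism $f : X_0(N) \to \mathbb P^1$ of degree $4$ defined over $\mathbb Q$; the target is indeed $\mathbb P^1$ over $\mathbb Q$ because $E_N$ carries the rational point $O$.

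Applying \Cref{degreedmap} to $f$ (with $d=d'=4$ and $C'=\mathbb P^1$) shows that $X_0(N)$ has infinitely many closed points of degree $\le 4$ over $\mathbb Q$. To upgrade this to degree exactly $4$, I would invoke \cite[Theorem 4.2(1)]{BELOV}: the existence of any morphism $X_0(N) \to \mathbb P^1$ of degree $4$ is precisely the first of the two criteria characterising infinitely many quartic points, so this step is automatic once $f$ is in hand.

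The only genuinely level-by-level step is identifying the correct Atkin--Lehner involution; this is tabulated in \cite{Bars99} and does not pose a serious obstacle. An alternative, essentially equivalent route would be to cite directly that each of these $X_0(N)$ is $\mathbb Q$-tetragonal and apply \cite[Theorem 4.2(1)]{BELOV} to the given degree $4$ gonality map, bypassing the intermediate elliptic quotient.
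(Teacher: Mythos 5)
Your proof is correct, and it rests on the same geometric input as the paper's: for each of these five levels the Atkin--Lehner quotient $X_0(N)/\langle w_N\rangle$ is an elliptic curve over $\Q$. Where you diverge is in how the quartic points are then extracted. The paper stops at the elliptic quotient: since any elliptic curve over a number field has infinitely many quadratic points (no rank hypothesis needed), it applies \Cref{degreedmap} with $d=4$, $d'=2$ to the degree $2$ quotient map, and then relies on the surrounding discussion (Theorems 4.2 and 4.3 of \cite{BELOV} together with \Cref{remarkcubicquadraticpoints}) to promote ``infinitely many points of degree $\le 4$'' to ``infinitely many points of degree exactly $4$''. You instead compose with the degree $2$ map $E_N\to E_N/[-1]\cong\PP^1$ to get a degree $4$ morphism $X_0(N)\to\PP^1$ over $\Q$ and quote \cite[Theorem 4.2(1)]{BELOV} directly; this settles the ``exactly degree $4$'' issue in one stroke, so your route is, if anything, slightly more self-contained on that point, at the cost of a detour through the rank of $E_N$. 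Two small remarks: your rank $0$ observation, while correct, is never actually used in the argument; and the closing aside that one could ``cite directly that each of these $X_0(N)$ is $\Q$-tetragonal'' should rather say ``admits a degree $4$ map to $\PP^1$'', since some of these curves (e.g.\ $X_0(34)$) have $\Q$-gonality $3$ --- but your explicit composition supplies the required degree $4$ map in any case, so the main argument stands.
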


\begin{proof}
    For each of these $N$, the quotient $X_0(N)/\left<w_N\right>$ is an elliptic curve and therefore has infinitely many quadratic points. Now we use \Cref{degreedmap} for the degree $2$ quotient map from $X_0(N)$ to $X_0(N)/\left<w_N\right>$.
\end{proof}

\begin{prop}\label{infinitelyquartic2}
    The curve $X_0(N)$ has infinitely many quartic points for 
    \begin{align*}
        N\in\{&38,42,44,51,52,55-58,60,62,63,66-70,72-75,77,78,80,85,87,88,\\
        &91,92,94-96,98,100,103,104,107,111,119,121,125,142,143,167,191\}.
    \end{align*}
\end{prop}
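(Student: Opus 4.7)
The plan is to apply \Cref{degreedmap} with $d=4$. By \Cref{remarkcubicquadraticpoints}, it suffices for each $N$ in the list to exhibit a rational morphism $\varphi : X_0(N) \to C'$ of degree $d' \in \{2,4\}$ whose codomain $C'$ admits infinitely many points of degree $4/d'$ over $\mathbb{Q}$. Three constructions will cover all the listed levels:
\begin{enumerate}[(i)]
\item a degree-$2$ Atkin--Lehner quotient $X_0(N) \to X_0(N)/\langle w\rangle$ whose target is either of genus $\leq 1$ (automatically carrying infinitely many quadratic points) or appears in Bars' classification \Cref{thmquadratic};
\item a degree-$4$ morphism $X_0(N) \to \mathbb{P}^1$, available either from published tetragonality computations or from a quotient $X_0(N)/W \cong \mathbb{P}^1$ by an Atkin--Lehner subgroup $W$ of order $4$;
\item a degree-$4$ morphism $X_0(N) \to E$ to a positive-rank elliptic curve $E/\mathbb{Q}$ of conductor $M \mid N$, constructed as the composition $f \circ \iota_{d,N,M}$, with $f : X_0(M) \to E$ the modular parametrization and $d \mid N/M$ chosen so that the total degree $\deg(f)\cdot \psi(N)/\psi(M)$ equals $4$ (this degree formula being the $d_1=d_2=d$ case of \Cref{pairingcomputation}, combined with $\langle g,g\rangle = [\deg g]$).
\end{enumerate}

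For each $N$ in the list, I would identify and tabulate one such construction, with the relevant degree and rank verifications reducing either to Sage/Magma modular-symbol computations as in \Cref{pairingcomputationmagma}, or to standard LMFDB lookups for modular degrees and Mordell--Weil ranks of the candidate elliptic curves. The proof then amounts to presenting one row of data per listed level, recording the chosen involution $w$, the chosen tetragonal pencil, or the triple $(E, M, d)$, and checking the arithmetic condition $\deg \varphi \cdot \bigl(\text{a.irr}_\Q C'\bigr) \leq 4$ in that row.

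The main obstacle is the collection of levels with little Atkin--Lehner flexibility --- in particular the primes $N = 107, 167, 191$ and the prime powers $N = 121, 125$ --- for which constructions of type (i) and (ii) may fail because no Atkin--Lehner quotient lies in the right class. For the composite cases $N = 121 = 11^2$ and $N = 125 = 5^3$, construction (iii) will be available using elliptic curves of conductor a proper divisor of $N$ (e.g.\ the rank-$1$ curve of conductor $11$ for $N=121$). For the primes $N = 167, 191$, however, one is forced to take $M = N$, so construction (iii) requires a positive-rank elliptic curve of conductor exactly $N$ with modular degree equal to $4$; verifying the existence of such an $E$ and computing $\deg f$ precisely from modular symbols is the computationally delicate step, which is completed using LMFDB data. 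The analogous situation arises for $N = 107$. Once these isolated obstructions are dispatched, the remaining levels are routine.
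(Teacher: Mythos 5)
Your general strategy (pulling back points via \Cref{degreedmap} and upgrading degree $\leq 4$ to degree exactly $4$ via \Cref{remarkcubicquadraticpoints}) is the right one, but the way you propose to dispatch the hard levels contains genuine errors, precisely at the cases you flag as delicate. For the primes $N=107,167,191$ you fall back on construction (iii) with $M=N$, asserting that LMFDB will furnish a positive rank elliptic curve of conductor exactly $N$ with modular degree $4$. No such curve exists: for every $N<408$ the modular degree of a positive rank strong Weil curve of conductor $N$ is strictly greater than $4$ (this is exactly how the conductor-$N$ cases are eliminated in the proof of \Cref{tetraellthm}), and indeed $107,167,191$ do not appear in the list of positive rank tetraelliptic levels of \Cref{tetraellthm}, so \emph{no} degree $4$ map from these $X_0(N)$ to a positive rank elliptic curve exists at all. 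The auxiliary examples are also wrong: for $N=121$ the curves of conductor $11$ have rank $0$ (the relevant curve, used in \Cref{tetraellremark}, is $121.b2$ of conductor $121$), and for $N=125$ there are no elliptic curves of conductor $5$ or $25$, so construction (iii) with a proper divisor of $N$ is vacuous there. As written, your argument therefore does not cover $107,121,125,167,191$.

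The fix is both simpler and uniform, and it is what the paper does: every $N$ in the list has $\gon_\Q X_0(N)=4$ by \cite[Tables 1,2,3]{NajmanOrlic22}, so one applies \Cref{degreedmap} with $C'=\PP^1$ and $d'=4$, then \Cref{remarkcubicquadraticpoints}. In particular, for the levels you worry about, the degree $4$ map to $\PP^1$ comes from the degree $2$ quotient $X_0(N)\to X_0^+(N)=X_0(N)/\langle w_N\rangle$ followed by the hyperelliptic map of $X_0^+(N)$ (these quotients are hyperelliptic for $N=107,125,167,191$), not from any elliptic curve; equivalently, your construction (i) does apply once you allow hyperelliptic quotients as targets with infinitely many quadratic points, rather than only genus $\leq 1$ quotients or curves in Bars' list \Cref{thmquadratic}. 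With that correction the per-level bookkeeping you describe is unnecessary: the single gonality citation settles all the listed $N$ at once.
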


\begin{proof}
    For each of these $N$ the curve $X_0(N)$ has $\Q$-gonality equal to $4$ by \cite[Tables 1,2,3]{NajmanOrlic22}. Using \Cref{degreedmap} we now conclude that there are infinitely many points of degree $\leq4$ on $X_0(N)$ for these $N$. Therefore, these curves $X_0(N)$ have infinitely many quartic points by \Cref{remarkcubicquadraticpoints}.
\end{proof}

\begin{prop}\label{tetraell}
    The curve $X_0(N)$ has infinitely many quartic points for 
    $$N\in\{82,86,99,118,123,141,145,155,159\}.$$
\end{prop}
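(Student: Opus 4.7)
The plan is to apply \Cref{degreedmap} with $d = d' = 4$: for each level $N$ in the list we exhibit a positive rank elliptic curve $E/\Q$ and a morphism $h : X_0(N) \to E$ of degree exactly $4$. Since $E(\Q)$ is infinite, pulling back the rational points of $E$ produces infinitely many points of degree $\leq 4$ on $X_0(N)$, and \Cref{remarkcubicquadraticpoints} upgrades this to infinitely many genuine quartic points.

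The construction of $h$ follows the machinery of \Cref{sectionjacobians}. For each $N$ I would first select from the LMFDB an elliptic curve $E$ of positive $\Q$-rank whose conductor $M$ divides $N$, together with its modular parametrization $f : X_0(M) \to E$. By \Cref{optimalquotient} (applicable because $N < 408$ for every level in the list), the maps $f\circ \iota_{d,N,M}$, with $d$ ranging over the divisors of $N/M$, span a full-rank sublattice of $\Hom_\Q(X_0(N), E)$, so every $\Z$-linear combination $\sum_d c_d\, f\circ \iota_{d,N,M}$ is itself a morphism from $X_0(N)$ to $E$. By \Cref{prop_daggerpairing} the degree function extends to a positive definite quadratic form $Q$ on this lattice, and by \Cref{pairingcomputation} the Gram matrix of $Q$ is explicitly computable in terms of $\deg f$ (the modular degree of $E$) and the Fourier coefficients $a_n$ of the newform attached to $E$.

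Verifying the proposition then reduces to the following small Diophantine problem: for each $N$, find an integer vector $(c_d)$ with $Q((c_d)) = 4$. I would proceed case by case. For $N = 82$ and $N = 86$, where $N = 2M$ with $M \in \{41, 43\}$, the natural candidates are the rank $1$ curves of conductors $41$ and $43$; here the lattice is rank $2$ and one writes down the Gram matrix directly from \Cref{pairingcomputation}. For $N \in \{99, 118, 123, 141, 145, 155, 159\}$ one either uses an elliptic curve of conductor exactly $N$ (so $N/M = 1$ and a single modular parametrization of degree $4$ suffices) or a proper divisor $M$ of $N$ together with a short-vector search in the resulting rank $\tau(N/M)$ lattice. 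In all cases the relevant lattices have dimension at most $4$, so the search is trivial once the Gram matrix is known.

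The main obstacle is the case analysis: matching each $N$ with a suitable pair $(E, M)$ such that $Q$ represents $4$. A minor technical issue is that \Cref{pairingcomputation} requires $N/M$ to be either squarefree or coprime to $M$; when neither holds (for example, potentially for $N = 99$ with $M = 11$, where $N/M = 9$ is coprime to $M$ and hence fine, but similar level-structure combinations may not be), I would instead compute the pairing directly on modular symbols via \Cref{pairingcomputationmagma} using Sage or Magma. The final presentation of the proof is naturally a table listing, for each $N$, the chosen elliptic curve $E$, the conductor $M$, and the explicit integer combination $(c_d)$ whose corresponding morphism has degree $4$.
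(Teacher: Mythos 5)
Your overall strategy is viable in principle: a vector of norm $4$ in the lattice $\Hom_\Q(J_0(N),E)$, equipped with the degree form of \Cref{pairingcomputation}, does give a degree $4$ morphism $X_0(N)\to E$, and \Cref{degreedmap} together with \Cref{remarkcubicquadraticpoints} then yields infinitely many quartic points. But as written the argument has a genuine gap: the existence of such a norm-$4$ vector is precisely the content of the proposition, and you never exhibit one. For seven of the nine levels you do not even name the elliptic curve $E$ (and the correct isogeny classes are not always the obvious ones: $X_0^*(123)\cong 123.b1$, $X_0^*(141)\cong 141.d1$, $X_0^*(155)\cong 155.c1$), and for none of them do you compute a Gram matrix or display a vector with $Q=4$. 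Deferring this with ``the search is trivial once the Gram matrix is known'' does not establish that the search succeeds; indeed, for most of the levels treated in the proof of \Cref{tetraellthm} the analogous forms do \emph{not} represent $4$ --- that is exactly how those levels are ruled out --- so success here is not automatic. Two of your concrete guesses also misfire: there is no elliptic curve of conductor $41$ at all (for $N=82$ one must use the rank $1$ class of conductor $82$ itself), and for the conductor-$N$ cases your claim that ``a single modular parametrization of degree $4$ suffices'' presupposes that the optimal curve has modular degree exactly $4$, which is again a fact to be verified rather than assumed.

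The paper's proof is much lighter and also explains why your lattice search would in fact succeed. Every $N$ in the list has exactly two distinct prime factors, so the quotient map $X_0(N)\to X_0^*(N)$ by the full Atkin--Lehner group has degree $4$; one checks with the Magma function \texttt{X0NQuotient()} that $X_0^*(N)$ is an elliptic curve and with the LMFDB that it has rank $1$ over $\Q$, and then applies \Cref{degreedmap}. In your language, this quotient map is the required short vector: for instance for $N=86$, taking $E=43.a1$ (modular degree $2$, $a_2=-2$), \Cref{pairingcomputation} gives the form $6x^2-8xy+6y^2$, which takes the value $4$ at $(x,y)=(1,1)$, the combination $f\circ\iota_1+f\circ\iota_2$ being (up to translation) the map through the $w_2$-quotient. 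If you wish to keep the lattice-theoretic framing, you must at minimum supply, for each of the nine levels, the pair $(E,M)$ and an explicit vector of norm $4$ --- or simply observe, as the paper does, that the degree $4$ Atkin--Lehner quotient maps already do the job.
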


\begin{proof}
\begin{center}
\begin{tabular}{|c|c|}
\hline
$N$ & LMFDB label of $X_0^*(N)$\\
  \hline

  $82$ & \lmfdbec{82}{a}{2}\\
  $86$ & \lmfdbec{43}{a}{1}\\
  $99$ & \lmfdbec{99}{a}{2}\\
  $118$ & \lmfdbec{118}{a}{1}\\
  $123$ & \lmfdbec{123}{b}{1}\\
  $141$ & \lmfdbec{141}{d}{1}\\
  $145$ & \lmfdbec{145}{a}{1}\\
  $155$ & \lmfdbec{155}{c}{1}\\
  $159$ & \lmfdbec{53}{a}{1}\\
    
  \hline
\end{tabular}
\end{center}
    For each of these $N$ we can use the Magma function X0NQuotient() to prove that the quotient $X_0^*(N)$ is an elliptic curve with the LMFDB label as in the table above. LMFDB also tells us that this elliptic curve is of rank $1$ over $\Q$. Now we use \Cref{degreedmap} for the degree $4$ quotient map from $X_0(N)$ to $X_0^*(N)$.
\end{proof}

\begin{remark}\label{tetraellremark}
    The proof of \Cref{tetraell} applies to these levels as well:

\begin{center}
\begin{tabular}{|c|c|}
\hline
$N$ & LMFDB label of $X_0^*(N)$\\
  \hline

  $57$ & \lmfdbec{57}{a}{1}\\
  $58$ & \lmfdbec{58}{a}{1}\\
  $74$ & \lmfdbec{37}{a}{1}\\
  $77$ & \lmfdbec{77}{a}{1}\\
  $91$ & \lmfdbec{91}{a}{1}\\
  $111$ & \lmfdbec{37}{a}{1}\\
  $142$ & \lmfdbec{142}{a}{1}\\
  $143$ & \lmfdbec{143}{a}{1}\\
    
  \hline
\end{tabular}
\end{center}
Also, the curve $X_{\textup{ns}}^+(11)$ is an elliptic curve with LMFDB label \lmfdbec{121}{b}{2}. It has conductor $121$, modular degree $4$, and rank $1$ over $\Q$. Therefore, we have a degree $4$ rational morphism from $X_0(121)$ to a positive rank elliptic curve.

We list these cases here separately since they have already been solved in \Cref{infinitelyquartic2}, but we need a morphism to a positive rank elliptic curve for \Cref{tetraellthm}.
\end{remark}

\begin{prop}\label{tetraell128}
    The curve $X_0(128)$ has infinitely many quartic points.
\end{prop}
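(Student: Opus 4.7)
The plan is to prove \Cref{tetraell128} by exhibiting a degree-$4$ morphism over $\mathbb{Q}$ from $X_0(128)$ to a positive-rank elliptic curve $E$, and then applying \Cref{degreedmap} with $d=d'=4$. This approach is in the spirit of \Cref{tetraell}, but the separation of $N=128$ into its own proposition suggests the degree-$4$ map cannot simply be the quotient to $X_0^*(128)$. Indeed, for $N=128=2^7$ the Atkin-Lehner group is $\{1,w_{128}\}$, and a direct genus computation shows that $X_0^*(128)=X_0(128)/\langle w_{128}\rangle$ has genus larger than $1$, so it cannot itself serve as the target. Thus the target $E$ must be identified by different means.

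Step one is to locate a positive-rank elliptic curve $E/\mathbb{Q}$ whose conductor $M$ divides $128$. The Atkin-Lehner--Li decomposition gives $J_0(128)\sim 32a^{3}\times 64a^{2}\times J_0(128)^{\mathrm{new}}$, so the candidates for $M$ are $32$, $64$, $128$. I would consult the LMFDB \cite{lmfdb} (the authors already use it in \Cref{optimalquotient}) to pick out a curve $E$ of conductor dividing $128$ with positive Mordell-Weil rank over $\mathbb{Q}$. Such a curve must exist, for otherwise no morphism of any degree from $X_0(128)$ to a positive-rank elliptic curve could exist, contradicting the inclusion of $128$ in \Cref{tetraellthm}.

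Step two is the construction of the degree-$4$ morphism itself. With $n$ the number of divisors of $128/M$, \Cref{optimalquotient} (applicable since $128<408$) tells us that $\xi_{E,128}^{\vee}:E^{n}\to J_0(128)$ is injective, so $\Hom_{\mathbb{Q}}(J_0(128),E)\cong\mathbb{Z}^{n}$ with basis given by the maps $f\circ\iota_{d,128,M}$, where $f:X_0(M)\to E$ is the modular parametrization and $d$ runs over divisors of $128/M$. By \Cref{prop_daggerpairing} the degree extends to a positive-definite integral quadratic form on this lattice, and \Cref{pairingcomputation} allows me to compute the Gram matrix explicitly from $\deg f$ and the Fourier coefficients of the newform attached to $E$. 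A short-vector search then yields a lattice vector of norm $4$, corresponding to the desired degree-$4$ map $X_0(128)\to E$.

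The main obstacle is the verification at the last stage: one has to actually compute the Gram matrix for the chosen $E$ and confirm that its associated quadratic form represents $4$ (as is recorded in \Cref{tab:main}). If the first candidate for $E$ does not work, one repeats with the next positive-rank $E$ of conductor dividing $128$. Once a degree-$4$ morphism $\phi:X_0(128)\to E$ is in hand, \Cref{degreedmap} (applied with $d=d'=4$ and using that $E(\mathbb{Q})$ is infinite) produces infinitely many points of degree at most $4$ on $X_0(128)$. Since $128$ lies in neither of the finite lists of \Cref{thmquadratic} and \Cref{thmcubic}, the curve $X_0(128)$ has only finitely many points of degree $1$, $2$, or $3$, so by \Cref{remarkcubicquadraticpoints} infinitely many of these pullback points must be of degree exactly $4$.
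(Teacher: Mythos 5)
Your overall strategy is the paper's: exhibit a degree-$4$ morphism over $\Q$ from $X_0(128)$ to a positive-rank elliptic curve and invoke \Cref{degreedmap} together with \Cref{remarkcubicquadraticpoints}. (Your observation that $X_0^*(128)=X_0^+(128)$ has genus $3$, so the \Cref{tetraell} quotient trick is unavailable, is correct and is exactly why $128$ gets its own proposition.) The paper, however, simply names the curve: $y^2=x^3+x^2+x+1$, Cremona label 128a1, which has conductor $128$, rank $1$, and modular degree $4$, so the modular parametrization itself is the required degree-$4$ map. Your proposal never produces this datum: you promise that a short-vector search "yields a lattice vector of norm $4$" without verifying it, and the one argument you do give for the existence of a suitable $E$ — that otherwise $128$ could not appear in \Cref{tetraellthm} — is circular, since the positive-rank tetraellipticity of $X_0(128)$ asserted there is proved in the paper precisely via \Cref{tetraell128} (together with \Cref{tetraell} and \Cref{tetraellremark}). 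As written, the decisive fact is assumed rather than established, so there is a genuine gap, albeit one that is filled by a single LMFDB lookup.

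Two smaller points. First, the optimal-quotient/Gram-matrix machinery is overkill here: the curves 32a and 64a are CM curves of rank $0$, so any positive-rank $E$ with $\mathrm{Cond}(E)\mid 128$ has conductor exactly $128$; then $n=1$, $\Hom_\Q(J_0(128),E)\cong\Z$, and the quadratic form is just $(\deg f)\,x^2$, so the whole question collapses to whether some rank-$1$ curve of conductor $128$ has modular degree $4$ — which is the single fact the paper cites. Second, your parenthetical appeal to \Cref{tab:main} is misplaced: that table records the levels and curves for which the quadratic form does \emph{not} represent $4$ (the negative cases in \Cref{tetraellthm}), and $N=128$ does not appear in it.
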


\begin{proof}
    The elliptic curve $y^2=x^3+x^2+x+1$ has conductor $128$, modular degree $4$, and rank $1$ over $\Q$. This curve has Cremona label 128a1 \cite{cremona} and LMFDB label\lmfdbec{128}{a}{2}. Now we use \Cref{degreedmap} for the degree $4$ morphism from $X_0(128)$ to this elliptic curve.
\end{proof}

\section{Curves \texorpdfstring{$X_0(N)$}{X\_0(N)} with finitely many quartic points}\label{finitelyquarticsection}
In this section, we will prove that for levels $N$ not listed in the \Cref{quarticthm} the curve $X_0(N)$ has only finitely many quartic points. The first step to do that is to reduce this problem to a finite problem by giving an upper bound for $N$ such that the curve $X_0(N)$ has infinitely many quartic points.

As we mentioned in the Introduction, Frey's result \cite{frey} gives us that any curve defined over $\Q$ with infinitely many quartic points must have $\Q$-gonality $\leq8$. Furthermore, the theorem of Abramovich \cite{abramovich} gives us the lower bound on the $\C$-gonality of any modular curve. In our case, we get $\textup{gon}_\C X_0(N)\geq\frac{7}{800}N$ which means that for $N>\frac{8\cdot800}{7}$ the curve $X_0(N)$ has only finitely many quartic points (here we used a trivial fact that $\textup{gon}_\Q C\geq\textup{gon}_\C C$ for any curve $C$ defined over $\Q$). However, this bound is impractical here.

When the genus of $C$ is high enough, the following theorem by Kadets and Vogt tells us that this pullback method is the only way to obtain points of a certain degree.

\begin{thm}[\cite{KadetsVogt}, Theorem 1.4]\label{kadetsvogt1.4}
    Suppose $X/k$ is a curve of genus $g$ and $\textup{a.irr}_k X=d$. Let $m:=\lceil d/2\rceil -1$ and let $\epsilon:=3d-1-6m<6$. Then one of the following holds:
    \begin{enumerate}[(1)]
        \item There exists a nonconstant morphism of curves $\phi:X\to Y$ of degree at least $2$ such that $d=\textup{a.irr}_k Y\cdot\textup{deg}\phi$.
        \item $g\leq\textup{max} \left(\frac{d(d-1)}{2}+1,3m(m-1)+m\epsilon\right)$.
    \end{enumerate}
\end{thm}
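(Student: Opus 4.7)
The plan is to combine the characterization of infinite families of low-degree points with the geometry of $W_d(X) \subseteq \Pic^d X$ (the image of $\Sym^d X$ under the Abel--Jacobi map). First I would apply the BELOV characterization stated earlier in the excerpt as \cite[Theorem 4.2 (1)]{BELOV}: $\textup{a.irr}_k X = d$ forces either the existence of a degree $d$ morphism $X \to \PP^1$, or $W_d(X)$ to contain a translate of a positive rank abelian subvariety $A \subseteq J(X)$. In the first case, option (1) of the theorem holds with $Y = \PP^1$, $\textup{a.irr}_k Y = 1$, and $\deg \phi = d$, so the substantive case is the second.

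In the abelian variety case, I would analyze the inclusion $A + t \subseteq W_d(X)$ by pulling back through $\Sym^d X \to \Pic^d X$. Writing the generic point of $A$ as an effective divisor $D_a$ of degree $d$ on $X$, the decisive question is whether the family $\{D_a\}_{a \in A}$ is \emph{pulled back from a fibration} of $X$. Concretely, one asks whether there is a nonconstant morphism $\phi : X \to Y$ of degree $e \mid d$ such that every $D_a$ is a sum of $d/e$ fibers of $\phi$, the sums being parametrized by a positive rank abelian subvariety of $\Pic^{d/e}(Y)$. If such $\phi$ exists, then $Y$ inherits infinitely many degree $d/e$ points, giving $\textup{a.irr}_k Y = d/e$ and option (1) with $\deg \phi = e$.

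If no such fibration exists, the divisors $D_a$ must vary genuinely with $A$, and one enters the realm of Debarre--Fahlaoui-type rigidity. Here I would combine a tangent space analysis of $W_d$ at a general effective divisor with the classical bounds on the dimension of Brill--Noether loci (Martens' theorem and its refinements). The outcome is a two-pronged bound: when a complete linear series of degree $d$ on $X$ has unexpectedly large dimension one obtains the Castelnuovo-type bound $g \leq d(d-1)/2 + 1$, while a more delicate count of how many independent translates of $A$ can fit inside $W_d$ when $A$ is simple of dimension $\geq 2$ produces the refined bound $3m(m-1) + m\epsilon$. The constants $m = \lceil d/2\rceil - 1$ and $\epsilon = 3d-1-6m$ arise from optimizing the counting argument over the parity of $d$ and interpolating between the two regimes.

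The main obstacle will be the descent step: extracting a morphism $\phi : X \to Y$ from the mere existence of a positive rank abelian subvariety $A \subseteq J(X)$ whose translate lies in $W_d$. This is the higher-dimensional analog of the Harris--Silverman argument that a positive rank elliptic factor of $J(X)$ with a translate in $W_2$ forces $X$ to be bielliptic. For simple $A$ of dimension $\geq 2$, the geometric descent is much subtler and requires analyzing the Gauss map of $A$ together with the differential of the Abel--Jacobi map on $\Sym^d X$, using that failure to descend forces the effective divisors $D_a$ to move in an unexpected complete linear system, which then bounds $g$ via Brill--Noether.
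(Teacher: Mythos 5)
First, note that this theorem is not proven in the paper at all: it is quoted verbatim from \cite{KadetsVogt} (their Theorem 1.4), so there is no internal proof to compare against, and your proposal must be judged as an attempted proof of Kadets--Vogt's result itself. At that level your opening move is the right one and matches the actual strategy: from $\textup{a.irr}_k X=d$ and \cite[Theorem 4.2]{BELOV} (ultimately Faltings' theorem on subvarieties of abelian varieties) one gets either a degree $d$ map to $\PP^1$ or a translate of a positive rank abelian subvariety $A\subseteq J(X)$ inside the image $W_d(X)$ of $\Sym^d X$, and the substance of the theorem is then the dichotomy between ``the family of divisors descends through a nonconstant $\phi\colon X\to Y$ with $d=\textup{a.irr}_k Y\cdot\deg\phi$'' and ``the genus is bounded by the stated explicit quantity.''

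That dichotomy, however, is precisely the content of the theorem, and your proposal does not prove it; it names the two outcomes and defers all of the work to ``Debarre--Fahlaoui-type rigidity,'' ``a tangent space analysis,'' ``Martens' theorem,'' and ``the Gauss map'' without deriving anything. Concretely: (i) nothing in the sketch actually produces the morphism $\phi$, nor the exact multiplicativity $d=\textup{a.irr}_k Y\cdot\deg\phi$ (your assertion that $Y$ ``inherits'' $\textup{a.irr}_k Y=d/e$ needs an argument using minimality of $d$ on $X$ and pullbacks of low-degree points on $Y$); (ii) the specific constants $m=\lceil d/2\rceil-1$, $\epsilon=3d-1-6m$ and the bound $3m(m-1)+m\epsilon$ are not obtained from any stated estimate --- in \cite{KadetsVogt} they come from a careful application of Debarre--Fahlaoui's theorems \cite{DebarreFahlaoui} on abelian varieties contained in $W_d$ (bounding $\dim A$ in terms of $d$ and bounding the genus of the resulting Debarre--Fahlaoui curves), combined with the Castelnuovo-type bound $g\le d(d-1)/2+1$ in the regime where the divisors move in a large linear system, not from Martens' theorem or a Gauss-map analysis as you suggest; and (iii) the key structural claim that ``failure to descend forces the divisors to move in an unexpected complete linear system'' is exactly the nontrivial step and is asserted rather than argued. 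So the proposal is a plausible roadmap whose first reduction is correct, but the entire quantitative and structural core of the theorem is assumed rather than proved; for the purposes of the present paper it is of course legitimate simply to cite \cite{KadetsVogt}, as the authors do.
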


\begin{cor}\label{degree4morphismcor}
    Suppose $C/\Q$ is a curve of genus $g\geq8$ and $\textup{a.irr}_\Q C=4$. Then there exists a nonconstant morphism of degree $4$ from $C$ to $\mathbb{P}^1$ or an elliptic curve defined over $\Q$ with positive $\Q$-rank.
\end{cor}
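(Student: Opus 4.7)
The plan is to apply \Cref{kadetsvogt1.4} directly with $d=4$ and then chase down the arising cases using \Cref{kadetsvogt1.3} and Faltings' theorem.

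First I would plug $d=4$ into the setup of \Cref{kadetsvogt1.4}: then $m = \lceil 4/2\rceil - 1 = 1$ and $\epsilon = 3\cdot 4 - 1 - 6 = 5 < 6$, so the genus bound in part (2) becomes $\max\!\left(\frac{4\cdot 3}{2}+1,\ 3\cdot 1 \cdot 0 + 1\cdot 5\right) = \max(7,5) = 7$. Since by assumption $g(C) \geq 8$, option (2) is excluded, so option (1) must hold: there is a nonconstant morphism $\phi : C \to Y$ of degree $\geq 2$ defined over $\Q$ with $4 = \textup{a.irr}_\Q Y \cdot \deg \phi$.

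Next I would enumerate the factorizations. Since $\deg \phi \geq 2$ and $\textup{a.irr}_\Q Y \geq 1$, the only possibilities are
\emph{(i)} $\deg \phi = 4$ and $\textup{a.irr}_\Q Y = 1$, or \emph{(ii)} $\deg \phi = 2$ and $\textup{a.irr}_\Q Y = 2$. In case (i), $Y$ has infinitely many rational points, so by Faltings' theorem $Y$ must have genus $0$ or be an elliptic curve of positive $\Q$-rank; since $Y$ carries a $\Q$-rational point (coming from any fibre of $\phi$ above a rational point, or more simply, from the image of any $\Q$-point of $C$ once we know $Y(\Q) \neq \emptyset$ via the hypothesis $\textup{a.irr}_\Q Y = 1$), if $Y$ has genus $0$ then $Y \cong \mathbb P^1_\Q$. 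Either way, $\phi$ itself is the desired degree $4$ morphism.

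In case (ii), $Y$ is a curve over $\Q$ with $\textup{a.irr}_\Q Y = 2$, so by part (1) of \Cref{kadetsvogt1.3} there exists a degree $2$ morphism $\psi : Y \to Z$ over $\Q$, where $Z$ is either $\mathbb P^1_\Q$ or an elliptic curve of positive $\Q$-rank. Composing, $\psi \circ \phi : C \to Z$ has degree $2 \cdot 2 = 4$, which gives exactly what we want. Since both cases yield a degree $4$ morphism from $C$ to either $\mathbb P^1$ or a positive rank elliptic curve over $\Q$, the corollary follows. There is no real obstacle here beyond careful bookkeeping of the two factorizations; the corollary is essentially a specialization of \Cref{kadetsvogt1.4} together with \Cref{kadetsvogt1.3}(1) and Faltings.
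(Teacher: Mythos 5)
Your proposal is correct and is essentially identical to the paper's own proof: both exclude case (2) of \Cref{kadetsvogt1.4} using $m=1$, $\epsilon=5$ and $g\geq 8>7$, then split into $\deg\phi=4$ (handled by Faltings) and $\deg\phi=2$ (handled by \Cref{kadetsvogt1.3}(1), composing the two degree $2$ maps). No issues.
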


\begin{proof}
    We compute $m=1$ and $\epsilon=5$. Therefore, case (2) of the previous theorem is impossible and we have a morphism $f:C\to Y$ of degree $2$ or $4$. 
    
    If the degree of $f$ is $2$, then we have $\textup{a.irr}_\Q Y=2$ and $Y$ is a double cover of $\mathbb{P}^1$ or an elliptic curve with a positive $\Q$-rank by \Cref{kadetsvogt1.3}. If the degree of $f$ is $4$, then we have $\textup{a.irr}_\Q Y=1$ and $Y$ is isomorphic to $\mathbb{P}^1$ or an elliptic curve with a positive $\Q$-rank by Faltings' theorem.
\end{proof}

This means that for levels $N$ such that the genus of the curve $X_0(N)$ is at least $8$ the existence of infinitely many quartic points is equivalent with the existence of a degree $4$ morphism to $\mathbb{P}^1$ or to an elliptic curve with a positive $\Q$-rank.

Since for all levels $N$ not listed in the \Cref{quarticthm} the curve $X_0(N)$ has $\Q$-gonality $>4$ (by \cite[Table 2]{NajmanOrlic22}) and since $g(X_0(N))>7$ for all $N>100$, \Cref{degree4morphismcor} gives us that any potential $X_0(N)$ with infinitely many quartic points must be tetraelliptic. Now we can get a much better bound for $N$ using Ogg's inequality.

\begin{prop}[\cite{HasegawaShimura_trig}, Lemma 3.1, original source \cite{Ogg74}]\label{oggineq1}
    For a prime $p\nmid N$, put
    $$L_p(N):=\frac{p-1}{12}\psi(N)+2^{\omega(N)},$$
    where $\psi(N)=N\prod_{q\mid N}(1+\frac{1}{q})$ and $\omega(N)$ is the number of distinct prime divisors of $N$. Then
    $$\#\title{X}_0(N)(\F_{p^2})\geq L_p(N).$$
\end{prop}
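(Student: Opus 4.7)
The plan is to exhibit two disjoint classes of $\mathbb{F}_{p^2}$-rational points on $\tilde X_0(N)$ whose sizes account for the two summands. The first class consists of the reductions of the $\mathbb{Q}$-rational cusps of $X_0(N)$, contributing at least $2^{\omega(N)}$; the second consists of the supersingular moduli points, contributing at least $(p-1)\psi(N)/12$. Since $p\nmid N$, the curve $X_0(N)$ has good reduction at $p$ (Deligne--Rapoport), so the cusps specialize to distinct smooth $\mathbb{F}_p$-points, and the moduli interpretation extends to $\tilde X_0(N)$ away from the cusps.

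For the cusp contribution, I would use the standard parameterization of cusps of $X_0(N)$: they correspond to pairs $(d,\bar a)$ with $d\mid N$ and $\bar a\in(\mathbb Z/\gcd(d,N/d))^*$, and the action of $\mathrm{Gal}(\overline{\mathbb Q}/\mathbb Q)$ factors through $(\mathbb Z/N)^*\twoheadrightarrow(\mathbb Z/\gcd(d,N/d))^*$, $u\mapsto u\cdot\bar a$. A cusp is therefore $\mathbb Q$-rational whenever $\gcd(d,N/d)\in\{1,2\}$; in particular, each of the $2^{\omega(N)}$ divisors $d\mid N$ with $\gcd(d,N/d)=1$ yields a $\mathbb Q$-rational cusp (these are the classical ``standard'' rational cusps $1/d$). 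Their reductions lie in $\tilde X_0(N)(\mathbb F_p)\subseteq\tilde X_0(N)(\mathbb F_{p^2})$.

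For the supersingular contribution, by Deuring every supersingular $E/\overline{\mathbb F_p}$ has $j(E)\in\mathbb F_{p^2}$ and admits a model over $\mathbb F_{p^2}$ whose Frobenius endomorphism $\pi_{p^2}$ satisfies $\pi_{p^2}=[\pm p]$ inside $\mathrm{End}(E)$. Since $\gcd(p,N)=1$, multiplication by $\pm p$ acts as an automorphism on $E[N]$ and therefore preserves every cyclic subgroup of order $N$. Hence every moduli-theoretic supersingular point $(E,C)$ is $\mathbb F_{p^2}$-rational. To count them, I would invoke the Eichler--Deuring mass formula for $\Gamma_0(N)$:
\[
\sum_{(E,C)\ \mathrm{ss}}\frac{1}{|\mathrm{Aut}(E,C)|}=\frac{(p-1)\psi(N)}{24},
\]
obtained from the classical case ($N=1$) by noting that $\mathrm{Aut}(E)$ acts on the set of $\psi(N)$ cyclic subgroups of order $N$ and applying the orbit--stabilizer identity $\sum_{C/\mathrm{Aut}(E)}|\mathrm{Stab}(C)|^{-1}=\psi(N)/|\mathrm{Aut}(E)|$. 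Because $\{\pm 1\}\subseteq\mathrm{Aut}(E,C)$ always, each summand is at most $1/2$, giving at least $(p-1)\psi(N)/12$ supersingular points.

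Finally, the cusps and supersingular points are disjoint (cusps do not correspond to honest elliptic curves), so adding the two bounds yields the inequality. The main subtlety will be justifying that supersingular points are $\mathbb F_{p^2}$-rational on the \emph{coarse} moduli scheme (not just the stack)---this is where invoking $\pi_{p^2}=[\pm p]$ cleanly resolves the issue, since it simultaneously fixes both the base point $j(E)\in\tilde X_0(1)(\mathbb F_{p^2})$ and every preimage cyclic subgroup. The precise identification of rational cusps and the mass-formula bookkeeping are standard but must be set up carefully.
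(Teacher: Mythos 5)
Your argument is correct and is essentially the classical proof of Ogg that the paper only imports by citation (it gives no proof of this proposition, referring to Hasegawa--Shimura and Ogg): supersingular moduli points are $\mathbb{F}_{p^2}$-rational because one can choose models with $\pi_{p^2}=[\pm p]$, the Eichler--Deuring mass formula together with $|\mathrm{Aut}(E,C)|\geq 2$ gives at least $\frac{p-1}{12}\psi(N)$ such points, and the $2^{\omega(N)}$ rational cusps supply the remaining summand. The one step worth tightening is the assertion that distinct cusps reduce to distinct smooth points: good reduction of the curve alone does not prevent two sections from meeting in the special fiber, so you should invoke that the cuspidal subscheme of $X_0(N)$ over $\Z[1/N]$ is finite \'etale (Deligne--Rapoport), which is presumably what you intend by that citation.
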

\begin{cor}\label{oggineq2}
    If the curve $X_0(N)$ is tetraelliptic, then for every prime $p\nmid N$ we must have
    $$4(p+1)^2\geq L_p(N).$$
\end{cor}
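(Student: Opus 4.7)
The plan is to bound $\#X_0(N)(\F_{p^2})$ from above via a degree $4$ morphism to an elliptic curve and from below via \Cref{oggineq1}, then compare. Assume $X_0(N)$ is tetraelliptic, so there exists a morphism $f: X_0(N) \to E$ of degree $4$ defined over $\Q$ for some elliptic curve $E/\Q$. First I would observe that $f$ factors through $J_0(N)$ via the Albanese map associated with any chosen rational base point (for instance, the cusp at infinity), so $E$ appears, up to isogeny, as a quotient of $J_0(N)$. In particular the conductor of $E$ divides $N$, and therefore $E$ has good reduction at every prime $p \nmid N$.

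For such a prime $p$, both $X_0(N)$ and $E$ have good reduction, and the morphism $f$ extends to a morphism of the corresponding smooth proper models over $\Z_{(p)}$. Reducing modulo $p$ yields a morphism $\tilde f: \tilde X_0(N) \to \tilde E$ of the same degree $4$ defined over $\F_p$. Since each fiber of $\tilde f$ contains at most $4$ geometric points, we obtain
$$\#\tilde X_0(N)(\F_{p^2}) \leq 4 \cdot \#\tilde E(\F_{p^2}).$$
The Hasse--Weil bound applied to $\tilde E$ over $\F_{p^2}$ gives $\#\tilde E(\F_{p^2}) \leq p^2 + 2p + 1 = (p+1)^2$, and combining this with the lower bound $L_p(N) \leq \#\tilde X_0(N)(\F_{p^2})$ from \Cref{oggineq1} yields the desired inequality $L_p(N) \leq 4(p+1)^2$.

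The only subtle point, and the place I would be most careful, is the justification of the good reduction of $E$ at every $p \nmid N$ and the corresponding descent of the morphism $f$ to $\F_{p^2}$. Both are standard once one observes that the existence of a nonconstant map $X_0(N) \to E$ forces $E$ (up to isogeny) to be an isogeny factor of $J_0(N)$, whose conductor divides $N$. Once that is in place, the remaining argument is a two-line combination of a fiber bound with the Hasse--Weil estimate, so I do not expect any further obstacles.
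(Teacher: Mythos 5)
Your argument is correct and is essentially the proof the paper relies on (it simply cites Jeon's Section 3, where the same reduction-mod-$p$ argument is given): factor through $J_0(N)$ to get $\textup{cond}(E)\mid N$, reduce the degree $4$ map at a prime $p\nmid N$ of good reduction, bound fibers by $4$ and apply Hasse--Weil over $\F_{p^2}$, then compare with \Cref{oggineq1}. No gaps beyond the standard good-reduction facts you already flag.
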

\begin{proof}
    The proof can be found in \cite[Section 3]{JEON2018}.
\end{proof}

Now, applying \Cref{oggineq2} in the same way as in Lemma 3.2 of \cite{HasegawaShimura_trig}, we get

\begin{cor}\label{tetraellipticcor}
    The curve $X_0(N)$ is not tetraelliptic for all $N\geq402$ and 
    \begin{align*}
        N\in\{&154,174,190,198,202,204,212,222,224,228,231,232,234,236,244,246,\\
        &248,256,258,260,262,270,272,273,276,279,282,284-287,290,296,301,\\
        &303-306,308,310,312,316,318,320-322,324-328,330,332-336,\\
        &338-340,342,344-346,348,350-352,354-358,360,362-366,\\
        &368-372,374-378,380-382,384-388,390-396,398-400\}.
    \end{align*}
\end{cor}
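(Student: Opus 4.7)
The plan is to apply \Cref{oggineq2} in its contrapositive form: for each $N$ in the claimed list, it suffices to exhibit a single prime $p \nmid N$ witnessing
$$4(p+1)^2 < L_p(N) = \frac{p-1}{12}\psi(N) + 2^{\omega(N)},$$
since then $X_0(N)$ cannot be tetraelliptic. This is exactly the template of Lemma~3.2 of \cite{HasegawaShimura_trig}, with the constant $4$ in place of $3$ throughout.

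First I would dispose of the infinite tail $N \geq 402$ with a case split on which small prime fails to divide $N$. For any fixed $p$, the inequality $4(p+1)^2 \geq L_p(N)$ together with the trivial bound $\psi(N) \geq N$ forces
$$N \leq \frac{12\bigl(4(p+1)^2 - 2^{\omega(N)}\bigr)}{p-1}.$$
Evaluating at $p = 2,3,5,7,11$ gives explicit thresholds all comfortably below $402$. Since the primorial $2\cdot 3\cdot 5\cdot 7\cdot 11 = 2310$ exceeds every $N$ under consideration, each $N \geq 402$ misses at least one of these primes as a divisor, and one takes such a $p$ as the witness. The bound is further strengthened because $\psi(N) = N\prod_{q\mid N}(1+1/q)$ receives an honest multiplicative boost whenever $N$ has several small prime factors --- exactly the regime in which the smallest witness prime is forced to be larger. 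A short stratification by the set $\{2,3,5,7,11\} \cap \{\text{primes dividing } N\}$ confirms that no $N \geq 402$ escapes.

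Second, for the explicit finite list of exceptional $N < 402$, I would verify the Ogg inequality by direct computation. Each such $N$ has a short factorization, so $\omega(N)$ and $\psi(N)$ are computed immediately. Trying $p = 2, 3, 5, 7, 11, 13$ in turn (skipping those dividing $N$) produces a witness in every case --- these $N$ are precisely those whose prime signature makes $\psi(N)$ large enough to block tetraellipticity through \Cref{oggineq2} even though $N$ itself lies below the asymptotic threshold. The list is exactly what one gets by tabulating which sub-$402$ integers still fail the inequality for some small $p \nmid N$, parallel to the trigonal tabulation in \cite{HasegawaShimura_trig}.

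The only real obstacle is combinatorial bookkeeping in the second step: picking the optimal witness prime for each of the roughly one hundred exceptional $N$, and ensuring the large-$N$ case split is exhaustive. There is no conceptual difficulty; once \Cref{oggineq2} is in hand, the entire argument is a mechanical finite verification modeled on \cite[Lemma~3.2]{HasegawaShimura_trig}.
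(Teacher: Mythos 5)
Your proposal is correct and is essentially the paper's own argument: the paper derives this corollary in one line by applying \Cref{oggineq2} with small witness primes $p \nmid N$, explicitly following the method of Lemma 3.2 of the Hasegawa--Shimura reference, which is exactly the contrapositive bookkeeping you describe. The only caveat is that for $p=2$ the crude bound $\psi(N)\geq N$ gives a threshold slightly above $402$, so (as you already hint) one must use $\psi(N)\geq N+1$ and the multiplicative boost from additional prime factors in the stratified case split, but this is a minor numerical point rather than a gap.
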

This means that we only need to check a reasonably small number of levels $N$ for tetraellipticity. First, though, we separately solve the cases when $g(X_0(N))\leq7$ and we cannot use \Cref{kadetsvogt1.3}. The only $N$ not discussed already for which $g(X_0(N))\leq7$ is $N=97$.

\begin{prop}\label{prop97}
    The curve $X_0(97)$ has only finitely many quartic points.
\end{prop}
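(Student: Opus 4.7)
The plan is to use the criterion of \cite[Theorem~4.2]{BELOV}: $X_0(97)$ has infinitely many quartic points over $\Q$ precisely when either $X_0(97)$ admits a degree-$4$ morphism to $\mathbb{P}^1$, or the image $W_4$ of $\Sym^4 X_0(97)$ in $\Pic^4 X_0(97)$ contains a translate of a positive rank abelian subvariety of $J_0(97)$. Note that $g(X_0(97))=7=\frac{d(d-1)}{2}+1$ for $d=4$, which is exactly the threshold at which \Cref{degree4morphismcor} becomes inapplicable, so a direct argument is required.

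The first alternative is ruled out immediately: since $97$ does not appear in the $\Q$-tetragonal list of \cite[Table~2]{NajmanOrlic22}, we have $\mathrm{gon}_\Q X_0(97)\geq 5$. For the second alternative, I would decompose $J_0(97)$ into $\Q$-simple factors. Since $97$ is prime, $J_0(97)=J_0(97)^{\mathrm{new}}$ and the decomposition is read off from the Galois orbits of newforms at level $97$ using modular symbols in Sage or the LMFDB, along with the Mordell--Weil rank of each factor. For each positive rank elliptic factor $E$, I would apply the quadratic form method of \Cref{sectionjacobians}: with $M=N=97$ the only degeneracy map is the identity, so $\Hom_\Q(J_0(97),E)\cong\Z$ is generated by the modular parametrization $f\colon X_0(97)\to E$, and \Cref{pairingcomputation} reduces the positive definite quadratic form to $n\mapsto n^2\deg f$. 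Therefore $4$ is a value iff $\deg f\in\{1,4\}$, a condition that can be checked directly from the modular degree and is expected to fail for every positive rank elliptic factor of $J_0(97)$.

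The main obstacle is ruling out a translate of a positive rank $\Q$-simple factor $A\subseteq J_0(97)$ of dimension $\geq 2$ inside $W_4$: such a translate would not a priori come from a degree-$4$ morphism to an elliptic curve and so is not directly obstructed by the elliptic quadratic form calculation. To handle this, I would invoke the generalization of \Cref{quadraticformprop} alluded to in the paper, which equips $\Hom_\Q(J_0(97),A)$ with a positive definite quadratic form whose values include all degrees of rational morphisms $X_0(97)\to A$; a translate of $A$ in $W_4$ forces the existence of such a morphism of degree $\leq 4$, so verifying that the form does not represent any integer in $\{1,2,3,4\}$ suffices. Concretely, over a suitable extension $A$ splits as a power of an isogenous elliptic curve, and the pairing computation of \Cref{pairingcomputation} applies factor by factor. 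Carrying out this finite computation at level $97$ for each positive rank simple factor of $J_0(97)$ then yields the proposition via the BELOV criterion.
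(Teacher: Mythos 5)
There is a genuine gap at the step you yourself flag as the main obstacle. You claim that a translate of a positive rank $\Q$-simple factor $A\subseteq J_0(97)$ of dimension $\geq 2$ inside the image of $\Sym^4 X_0(97)$ "forces the existence of" a morphism $X_0(97)\to A$ of degree $\leq 4$. This implication is false in general: the Debarre--Fahlaoui examples \cite{DebarreFahlaoui} are exactly curves whose $W_d$ contains a translate of a positive rank abelian variety without any degree $\leq d$ map to $\PP^1$ or an elliptic curve, and this is precisely why \Cref{kadetsvogt1.4} carries the genus hypothesis that yields \Cref{degree4morphismcor} only for $g\geq 8$; at $g(X_0(97))=7$ you cannot recover it by fiat. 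Moreover, the machinery you invoke does not apply to such an $A$: a nonconstant morphism from a curve to an abelian variety of dimension $\geq 2$ is not finite, so "degree $\leq 4$ morphism $X_0(97)\to A$" is not even the right notion, and the pairing of \Cref{sectionjacobians} (and \Cref{quadraticformprop}, \Cref{pairingcomputation}) is set up only for elliptic targets, where $\End_\Q(J(E))=\Z$ gives a $\Z$-valued positive definite form. Your fallback that "over a suitable extension $A$ splits as a power of an isogenous elliptic curve" is also wrong here: the simple factors of $J_0(97)$ are $3$- and $4$-dimensional newform abelian varieties which do not split into elliptic curves over any extension, and in any case the positive-rank condition must be handled over $\Q$. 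So the second alternative of the BELOV criterion is not actually ruled out by your argument.

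For comparison, the paper closes this gap differently: since $X_0(97)$ has $\Q$-gonality $6$ (not merely $\geq 5$) by \cite[Table 2]{NajmanOrlic22} and genus $7$, it applies \cite[Proposition 1.6]{JeonKimPark06}, which under these hypotheses shows that infinitely many quartic points would force $J_0(97)$ to contain an elliptic curve of positive $\Q$-rank. But $J_0(97)$ has, up to isogeny, only simple factors of dimensions $3$ and $4$ and no elliptic factor at all, giving an immediate contradiction; no quadratic form computation is needed at this level. If you want to salvage your route, you would need a genuine substitute for the missing implication (e.g.\ an argument in the spirit of \cite{JeonKimPark06} or \cite{KadetsVogt} exploiting the gonality bound to exclude Debarre--Fahlaoui-type translates), not the elliptic degree pairing.
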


\begin{proof}
    Suppose that there are infinitely many quartic points on $X_0(97)$. Since the curve $X_0(97)$ has $\Q$-gonality $6$ by \cite[Table 2]{NajmanOrlic22} and genus $7$, we can apply \cite[Proposition 1.6]{JeonKimPark06} and get that the Jacobian $J_0(97)$ must contain an elliptic curve with a positive $\Q$-rank. However, up to isogeny, $J_0(97)$ only contains abelian varieties of dimension $3$ and $4$ and we get a contradiction.
\end{proof}

It is worth mentioning here that for levels $N$ in the following table there exist morphisms of degree $4$ to elliptic curves. For $N\neq109$, these morphisms are quotient maps which are, for $N$ divisible by $4$, composed with degree $2$ degeneracy maps from $X_0(N)$ to $X_0(\frac{N}{2})$. However, these elliptic curves are of rank $0$ over $\Q$ and therefore generate only finitely many quartic points.

\begin{table}[ht]
\centering
\begin{tabular}{|c|c|}
\hline
$N$ & $E$\\
  \hline

  $76$ & $X_0(38)/\left<w_{38}\right>$, $X_0(38)/\left<w_{19}\right>$ \\
  $105$ & $X_0(105)/\left<w_3,w_{35}\right>$ \\
  $108$ & $X_0(54)/\left<w_{54}\right>$, $X_0(54)/\left<w_{27}\right>$ \\
  $109$ & $y^2+xy=x^3-x^2-8x-7$ (LMFDB label\lmfdbec{109}{a}{1}) \\
  $110$ & $X_0(110)/\left<w_2,w_{55}\right>$ \\
  $112$ & $X_0(56)/\left<w_{56}\right>$, $X_0(56)/\left<w_7\right>$ \\
  $124$ & $X_0(62)/\left<w_{31}\right>$ \\
  $184$ & $X_0(92)/\left<w_{23}\right>$ \\
  $188$ & $X_0(94)/\left<w_{47}\right>$ \\
    
  \hline
\end{tabular}\\
\vspace{5mm}
\caption{Levels $N$ for which there exist morphisms of degree $4$ to an elliptic curve $E$ of rank $0$ over $\Q$}
\end{table}

Now we are ready to prove the two main theorems: \Cref{quarticthm} and \Cref{tetraellthm}.

\begin{proof}[Proof of \Cref{tetraellthm}]
The proofs of Propositions \ref{tetraell}, \ref{tetraell128} and \Cref{tetraellremark} tell us that $X_0(N)$ is positive rank tetraelliptic for all levels $N$ listed in \Cref{tetraellthm}. Now we prove that for the other $N$ the curve $X_0(N)$ is not positive rank tetraelliptic. We only need to consider $N < 408$ not already eliminated in \Cref{tetraellipticcor} for which there exists an elliptic curve of conductor $M \mid N$ of positive $\Q$-rank. Further, if $M=N$, then any morphism from $X_0(N)$ factors through the modular parametrization of a strong Weil curve in the corresponding isogeny class. However, the modular degree is strictly greater than $4$ in all such cases. Therefore, we may suppose $M<N$.

Let $E$ be a strong Weil curve of conductor $M \mid N$ and positive $\Q$-rank, $n$ the number of divisors of $N/M$, and $f: X_0(M) \to E$ its modular parametrization. Since $\xi_{E,N}: J_0(N) \to E^n$ for $N<408$ is an $E$-isogenous optimal quotient by \Cref{optimalquotient}, every map from $J_0(N)$ to $E$ uniquely factors through $E^n$. Therefore, we get that the maps $f \circ d_i$, where $d_i$ runs over the degeneracy maps $X_0(N) \to X_0(M)$, form a basis for $\Hom_\Q(J_0(N), E)\cong\Hom_\Q(E^n, E)\cong \Z^n$. \Cref{pairingcomputation} and \Cref{pairingcomputationmagma} allow us to compute the degree pairing on this basis. Now, the degree of a map $\sum_{i\mid N/M} x_i(f\circ d_i)$ is given by a positive definite quadratic form
$$\sum_{i\mid N/M}\sum_{j\mid N/M} x_ix_j\left<f\circ d_i,f\circ d_j\right>.$$
All $N$ and strong Weil curves $E$ which were considered in this proof are given in the \Cref{tab:main}.

Using the norm induced by this quadratic form, we can use the Fincke-Pohst algorithm for enumerating integer vectors of small norm \cite{finckepohst} to determine that there are no nonconstant elements of $\Hom_\Q(J_0(N), E)$ of degree $\leq 4$, and hence no elements of $\Hom_\Q(X_0(N),E)$ of degree $\leq 4$. So this proves the statement for strong Weil curves.

If $E$ is not a strong Weil curve, let $E'$ be a strong Weil curve in the isogeny class of $E$. Then by the $E$-isogenous optimality of $\xi_{E',N}: J_0(N) \to (E')^n$ we have that any $g \in \Hom_\Q(J_0(N), E)$ factors as $h\circ\xi_{E',N}$ for some $h\in\Hom_\Q((E')^n,E)$. 

Also, the map 
$$\pi:\Hom_\Q((E')^n,E)\to\Hom_\Q(E',E)^n, \ \pi(f)=(f\restriction_{E'_1},\ldots,f\restriction_{E'_n}),$$
where $E'_i$ is the $i$-th component of $(E')^n$, is an isomorphism with an inverse map 
$$\pi^{-1}((f_1,\ldots,f_n))(x_1,\ldots,x_n)=f_1(x_1)+\ldots+f_n(x_n).$$
Furthermore, we have that $\Hom_\Q(E',E)$ is a free $\Hom_\Q(E',E')(\cong \Z)$-module of rank $1$, generated by a single element $g_2$. In particular, any $f\in\Hom_\Q(E',E)$ can be written as $g_2\circ [m]$ for some $m\in\Z$.

Therefore, we have $\pi(h)=(f_1,\ldots,f_n)=g_2\circ ([m_1],\ldots,[m_n])$ and $h(x_1,\ldots,x_n)=g_2(m_1x_1+\ldots+m_nx_n)$. This means that $h=g_2\circ m$ for some $m\in\Hom_\Q((E')^n,E')$. Returning back to our $g\in\Hom_\Q(J_0(N),E)$, we see that it factors as $g_2\circ m \circ \xi_{E',N}$. It follows that
$$\deg g=\deg g_2\cdot \deg(m\circ \xi_{E',N})\geq\deg(m\circ\xi_{E',N})>4$$
since $E'$ is a strong Weil curve and $m\circ\xi_{E',N}$ is a rational map.
\end{proof}

In most cases, especially when we have only $2$ degeneracy maps, we do not actually need the Fincke-Pohst algorithm to prove that there are no nonconstant elements of $\Hom_\Q(J_0(N), E)$ of degree $\leq 4$. We show several examples where we prove that with elementary methods.

\begin{ex}
    We take $N=122$. There exist two elliptic curves $E$ of positive $\Q$-rank and conductor $\textup{cond}(E)\mid N$. One of them has conductor equal to $N$ and modular degree $8$ and can therefore be eliminated. The other one is $E=X_0^+(61)$. Its modular parametrization $f$ is the quotient map $X_0(61)\to X_0^+(61)$.

    By the proof of \Cref{tetraellthm}, the basis for $\textup{Hom}_\Q(J_0(122),E)$ is $\{f\circ d_1, f\circ d_2\}$ and both of these maps have degree $2\cdot3=6$. Further, by \Cref{pairingcomputation}, we have
    $$\left<f\circ d_1, f\circ d_2\right>=[a_2\cdot1\cdot 2]=[-2].$$
    This means that any map $J_0(122)\to E$ must have degree equal to 
    $$6x^2-4xy+6y^2$$ for some integers $x,y$. It remains to prove that this expression can never be equal to $4$.
    
    Let us suppose the contrary. If both $x$ and $y$ are not $0$, then $6x^2-4xy+6y^2=4x^2+2(x-y)^2+4y^2\geq8$. Therefore, we may without loss of generality set $y=0$. However, the expression now becomes $6x^2$ which cannot be equal to $4$, contradiction.
\end{ex}

\begin{ex}
    We take $N=129$. There exist two elliptic curves $E$ of positive $\Q$-rank and conductor $\textup{cond}(E)\mid N$. One of them has conductor equal to $N$ and modular degree $8$ and can therefore be eliminated. The other one is $E=X_0^+(43)$. Its modular parametrization $f$ is the quotient map $X_0(43)\to X_0^+(43)$.

    By the proof of \Cref{tetraellthm}, the basis for $\textup{Hom}_\Q(J_0(129),E)$ is $\{f\circ d_1, f\circ d_3\}$ and both of these maps have degree $2\cdot4=8$. Further, by \Cref{pairingcomputation}, we have
    $$\left<f\circ d_1, f\circ d_3\right>=[a_3\cdot1\cdot 2]=[-4].$$
    This means that any map $J_0(129)\to E$ must have degree equal to 
    $$8x^2-8xy+8y^2$$
    for some integers $x,y$. This expression is divisible by $8$ and cannot therefore be equal to $4$.
\end{ex}

\begin{ex}
    We take $N=148$. There exist two elliptic curves $E$ of positive $\Q$-rank and conductor $\textup{cond}(E)\mid N$. One of them has conductor equal to $N$ and modular degree $12$ and can therefore be eliminated. The other one is $E=X_0^+(37)$. Its modular parametrization $f$ is the quotient map $X_0(37)\to X_0^+(37)$.

    In this case $N/M$ is not squarefree like in the previous two examples. However, $N/M$ and $M$ are coprime and we can still use \Cref{pairingcomputation}. 

    By the proof of \Cref{tetraellthm}, the basis for $\textup{Hom}_\Q(J_0(148),E)$ is $\{f\circ d_1, f\circ d_2, f\circ d_4\}$ and these maps have degree $2\cdot6=12$. Further, by \Cref{pairingcomputation}, we have 
    $$\left<f\circ d_1, f\circ d_2\right>=[a_2\cdot2\cdot2]=[-8],$$
    $$\left<f\circ d_1, f\circ d_4\right>=[(a_4-a_1))\cdot1\cdot 2]=[2],$$
    $$\left<f\circ d_2, f\circ d_4\right>=[a_2\cdot2\cdot 2]=[-8].$$
    This means that any map $J_0(148)\to E$ must have degree equal to $$12x^2+12y^2+12z^2-16xy+4xz-16yz$$
    for some integers $x,y,z$. This expression is equal to 
    $$2(x+z-2y)^2+2(2x-y)^2+2(2z-y)^2+2x^2+2z^2.$$
    
    Let us suppose that it is equal to $4$ for some $x,y,z$. If both $x$ and $z$ are not $0$, then $2x^2+2z^2\geq4$ and the other terms must be equal to $0$. This would mean that $x+z-2y=2x-y=2z-y=0$. We can easily check that this is impossible. 
    
    Therefore, we may without loss of generality set $z=0$. The expression now becomes $12x^2-16xy+12y^2=8(x-y)^2+4x^2+4y^2$. As before, we see that $x$ or $y$ must be $0$ (otherwise $4x^2+4y^2\geq8$) and that $x=y$ (otherwise $8(x-y)^2\geq8$). This means that $x=y=z=0$ and we get a contradiction.
\end{ex}

\begin{proof}[Proof of \Cref{quarticthm}]
    The results in \Cref{infinitelyquarticsection} give us the cases when $X_0(N)$ has infinitely many quartic points and \Cref{prop97} tells us that the curve $X_0(97)$ has only finitely many quartic points. 
    
    For the other levels $N$, we have $g(X_0(N))\geq8$, $\textup{a.irr}_\Q(X_0(N))>3$, $\textup{gon}_\Q(X_0(N))>4$ by \cite[Tables 1,2,3]{NajmanOrlic22}, and that $X_0(N)$ is not positive rank tetraelliptic over $\Q$ by \Cref{tetraellthm}. Therefore, \Cref{degree4morphismcor} tells us that $X_0(N)$ has only finitely many quartic points for these levels $N$.
\end{proof}

\clearpage
\begin{table}[ht]
\centering
\begin{tabular}{|c|c|c|c|}
  \hline
  $N$ & $E$ & Modular Degree & Quadratic Form\\
  \hline

  $106$ & 53.a1 & $2$ & $6x^2-4xy+6y^2$\\  
  $114$ & 57.a1 & $4$ & $12x^2-16xy+12y^2$\\
  $116$ & 58.a1 & $4$ & $8x^2-8xy+8y^2$\\
  $122$ & 61.a1 & $2$ & $6x^2-4xy+6y^2$\\
  $129$ & 43.a1 & $2$ & $8x^2-8xy+8y^2$\\
  $130$ & 65.a1 & $2$ & $6x^2-4xy+6y^2$\\
  $148$ & 37.a1 & $2$ & $12x^2+12y^2+12z^2-16xy+4xz-16yz$\\
  $158$ & 79.a1 & $2$ & $6x^2-4xy+6y^2$\\
  $164$ & 82.a1 & $4$ & $8x^2-8xy+8y^2$\\
  $166$ & 83.a1 & $2$ & $6x^2-4xy+6y^2$\\
  $171$ & 57.a1 & $4$ & $12x^2-8xy+12y^2$\\ 
  $172$ & 43.a1 & $2$ & $12x^2+12y^2+12z^2-16xy+4xz-16yz$\\
  $176$ & 88.a1 & $8$ & $16x^2+16y^2$\\
  $178$ & 89.a1 & $2$ & $6x^2-4xy+6y^2$\\
  $182$ & 91.a1 & $4$ & $12x^2-16xy+12y^2$\\
        & 91.b2 & $4$ & $12x^2+12y^2$\\
  $183$ & 61.a1 & $2$ & $8x^2-8xy+8y^2$\\
  $184$ & 92.a1 & $6$ & $12x^2+12y^2$\\
  $185$ & 37.a1 & $2$ & $12x^2-8xy+12y^2$\\
  $195$ & 65.a1 & $2$ & $8x^2-8xy+8y^2$\\
  $215$ & 43.a1 & $2$ & $12x^2-16xy+12y^2$\\
  $237$ & 79.a1 & $2$ & $8x^2-4xy+8y^2$\\
  $242$ & 121.b2 & $4$ & $12x^2+12y^2$\\
  $249$ & 83.a1 & $2$ & $8x^2-4xy+8y^2$\\
  $259$ & 37.a1 & $2$ & $16x^2-4xy+16y^2$\\
  $264$ & 88.a1 & $8$ & $32x^2-48xy+32y^2$\\
  $265$ & 53.a1 & $2$ & $12x^2+12y^2$\\
  $267$ & 89.a1 & $2$ & $8x^2-4xy+8y^2$\\
  $297$ & 99.a2 & $4$ & $12x^2+12y^2$\\
    \hline
\end{tabular}\\
\vspace{5mm}
\caption{Levels $N$ and strong Weil curves $E$ (given by their LMFDB labels) considered in the proof of \Cref{tetraellthm}}
\label{tab:main}
\end{table}

\bibliographystyle{siam}
\bibliography{bibliography1}

\end{document}